% !TeX spellcheck = en_US 
%
% 
% authors Giordani, R\"ohrle, and Schmitt
%
% Version:
% 31.10.25 (GR)
% 28.7.26 (GR)
%
%%%%%%%%%%%%%%%%%%%%%%%%%%%%%%%%%%%%%%%%%%%%%%%%%%%%%%%%%%%%
\documentclass[12pt]{amsart}
\usepackage{amsmath,amssymb,amsthm,graphics,amscd,mathrsfs}
\usepackage{amscd, amsfonts, amssymb, amsthm}
\usepackage{fullpage, verbatim}
\usepackage[colorlinks, breaklinks, linkcolor=blue]{hyperref}
\usepackage{breakurl}
\usepackage{array}
\usepackage{latexsym}
\usepackage[shortlabels]{enumitem}

\usepackage{xcolor}
\usepackage{subcaption}
\usepackage{tikz-cd}
\usepackage{tikz}
\usepackage{cleveref}
\usetikzlibrary{arrows}
\usetikzlibrary{shapes}
\usepackage{tkz-graph}
\usetikzlibrary{decorations}
\usetikzlibrary{arrows,decorations.markings}
\usepackage{multirow}
\tikzstyle{v} = [circle, draw, inner sep=2pt, minimum size=3pt, fill=black]
\tikzstyle{l} = [rectangle, draw, rounded corners]

\usepackage{caption}
\usetikzlibrary{calc}

%%%%%%%%%%%%%%%%%%%%%%%%%%%%%%%%%%%%%%%%%%%%%%%%%%%%%%%%%%%%%%%%%%%%%%
%%%%%%%%%%%%% Math macros
%%%%%%%%%%%%%%%%%%%%%%%%%%%%%%%%%%%%%%%%%%%%%%%%%%%%%%%%%%%%%%%%%%%%%%

%%%%%%%%% fancy letters
\newcommand\CA{{\mathscr A}}
\newcommand\SA{{\mathscr A}}
\newcommand\CB{{\mathscr B}}

\newcommand\CT{{\mathcal T}}

\newcommand\BBC{{\mathbb C}}
\newcommand\BBH{{\mathbb H}}

\newcommand\BBQ{{\mathbb Q}}
\newcommand\BBR{{\mathbb R}}
\newcommand\BBZ{{\mathbb Z}}

%%%%%%%%% operators

\newcommand\codim{\operatorname{codim}}

\newcommand\Fix{{\operatorname{Fix}}}

\newcommand\GL{\operatorname{GL}}

\newcommand\rk{\operatorname{rk}}

\newcommand\Sp{\operatorname{Sp}}

\newcommand\diag{{\operatorname{diag}}}
\newcommand\ord{{\operatorname{ord}}}
\newcommand\Ind{{\operatorname{Ind}}}
\newcommand\Cent{{\operatorname{Cent}}}

%%%%%%%% general...
\newcommand\inverse{^{-1}}

%%%%%%% local

\newcommand\id{{\operatorname{id}}}

%%%%%%%%%%%%%%%%%%%%%%%%%%%%%%%%%%%%%%%%%%%%%%%%%%%%%%%%%%%%%%%%%%%%%%
%%%%%%%%%%%%% theorem-related defs
%%%%%%%%%%%%%%%%%%%%%%%%%%%%%%%%%%%%%%%%%%%%%%%%%%%%%%%%%%%%%%%%%%%%%%

\numberwithin{equation}{section}

\theoremstyle{plain}
%\swapnumbers
\newtheorem{lemma}[equation]{Lemma}
\newtheorem{theorem}[equation]{Theorem}

\newtheorem{corollary}[equation]{Corollary}
\newtheorem{prop}[equation]{Proposition}
\theoremstyle{definition}
\newtheorem{defn}[equation]{Definition}
\newtheorem{remark}[equation]{Remark}

\newtheorem{example}[equation]{Example}

%%%%%%%%%%%%%%%%%%%%%%%%%%%%%%%%%%%%%%%%%%%%%%%%%%%%%%%%%%%%%%%%%%%%%%
%%%%%%%%%%%%% end of preamble
%%%%%%%%%%%%%%%%%%%%%%%%%%%%%%%%%%%%%%%%%%%%%%%%%%%%%%%%%%%%%%%%%%%%%%

\subjclass[2010]{}
\keywords{quaternionic reflection groups, complex reflection groups}

\begin{document}

%%%%%%%%%%%%%%%%%%%%%%%%%%%%%%%%%%%%%%%%%%%%%%%%%%%%%%%%%%%%%%%%%%%%%%
%%%%%%%%%%%%% top matter stuff
%%%%%%%%%%%%%%%%%%%%%%%%%%%%%%%%%%%%%%%%%%%%%%%%%%%%%%%%%%%%%%%%%%%%%%
\title[Invariants in the cohomology of the complement of quaternionic reflection arrangements]
{Invariants in the cohomology of the complement of quaternionic reflection arrangements}
	
		\author[L. Giordani]{Lorenzo Giordani}
\address
{Fakult\"at f\"ur Mathematik,
	Ruhr-Universit\"at Bochum,
	D-44780 Bochum, Germany}
\email{lorenzo.giordani@rub.de}

\author[G. Röhrle]{Gerhard Röhrle}
\address
{Fakult\"at f\"ur Mathematik,
	Ruhr-Universit\"at Bochum,
	D-44780 Bochum, Germany}
\email{gerhard.roehrle@rub.de}

\author[J. Schmitt]{Johannes Schmitt}
\address
{Fakult\"at f\"ur Mathematik,
	Ruhr-Universit\"at Bochum,
	D-44780 Bochum, Germany}
\email{johannes.schmitt@rub.de}

\allowdisplaybreaks
	
	\begin{abstract}
		Let $\CA$ be a hyperplane arrangement in a vector space $V$ and $G\leq\GL(V)$ a group permuting $\CA$. In case when $G$ is a complex reflection group and $\CA=\CA(G)$ its reflection arrangement in $V$, 
		Douglass, Pfeiffer, and Röhrle \cite{douglaspfeifferroehrle:invariants} studied the invariants of the $\BBQ G$-module $H^*(M(\CA);\BBQ)$, the rational, singular cohomology of the complement space $M(\CA)$ in $V$. In this paper we generalize the work in  
		\cite{douglaspfeifferroehrle:invariants} to the case of quaternionic reflection groups, first classified by Cohen in 1980. Surprisingly, only one additional family of new types of Poincar\'e polynomials occurs in the quaternionic setting which is not realised in the complex case, namely those of a particular class of 
		imprimitive irreducible quaternionic reflection groups. We utilize a new description of the lattices of intersections of imprimitive groups as Dowling lattices. Finally, we discuss bases of the space of $G$-invariants in $H^*(M(\CA);\BBQ)$.
	\end{abstract}

\maketitle

\tableofcontents

\section{Introduction}
Frequently, questions 
relating to reflection arrangements $\CA(G)$, where $\CA(G)$ 
consists of the reflecting hyperplanes of an underlying reflection group $G$ in its reflection representation $V$, 
arose first for symmetric groups, i.e.~for braid arrangements, 
then were extended to the remaining
Coxeter groups and finally embraced 
the entire class of complex reflection groups.
A prime example of this phenomenon is the question about the
topological nature of the complement $M(\CA(G))$ of the
union of the hyperplanes of the reflection arrangement $\CA(G)$ in $V$, see \cite{bessis:finite}.
In this paper we traverse steps along  
a similar route concerning questions on the cohomology of the 
complement of a quaternionic reflection arrangement.

Specifically, our aim is to study the representation of a quaternionic reflection group $G$ on the cohomology of the complement $M(\CA(G))$ of its quaternionic reflection arrangement $\CA(G)$, generalizing the study \cite{douglaspfeifferroehrle:invariants}.
In \emph{loc.~cit.}, Douglass, Pfeiffer    
and the second author
refined Brieskorn's study of the cohomology of the complement of a Coxeter arrangement \cite{Bri73} and generalised it to the case of a complex reflection group $G$ (indeed \cite{douglaspfeifferroehrle:invariants} addresses the more general setting of reflection cosets from \cite{brouemallemichel:spetsesI}); see also the references in \cite[\S 1.2]{douglaspfeifferroehrle:invariants} about earlier work by Lehrer and Callegaro--Marin in the latter case.\\

In what follows, let \(\BBH\) be the skew-field of quaternions and let \(V\) be a finite-dimensional right \(\BBH\)-vector space. A 
\emph{quaternionic reflection arrangement} in $V$ is a pair
$(\SA, G)$, where $G$ is a finite subgroup of the general linear group
$\GL(V)$ generated by \emph{quaternionic reflections} (\Cref{def:quatrefl}) and 
$\SA$ is the \emph{quaternionic reflection arrangement} consisting of 
the codimension one (right) \(\BBH\)-subspaces of $V$ fixed by the reflections in $G$ (\Cref{def:quatreflarr}). Thus $G$ acts on $\SA$ and so in turn on the complement of $\SA$ in $V$, 
\[
M(\SA) :=V\setminus \bigcup_{H\in \SA} H.
\]
Clearly, $M(\SA)$ is a $G$-stable complex submanifold of $V$. Let
\[
H^*(M(\SA);\BBQ) = \bigoplus_{k\geq 0} H^k(M(\SA);\BBQ)
\]
denote the rational, singular cohomology of $M(\SA)$.

The rule $g\mapsto (g\inverse)^*$ endows $H^*(M(\SA);\BBQ)$ with the structure of a
graded $\BBQ G$-algebra.
Ultimately, we are interested in the dimensions of the graded components of the ring of \(G\)-invariants $H^{*}(M(\CA);\BBQ)^G$ of $H^*(M(\SA);\BBQ)$.
These dimensions are encoded in the \emph{Poincaré polynomial} of \(H^*(M(\CA);\BBQ)^G\) defined as \[P(\CA, G; t) := \sum_{k \geq 0}\dim_\BBQ(H^k(M(\CA);\BBQ)^G)t^k.\]
Note that if $G$ is a complex reflection group acting on a complex vector space \(V'\) then $G$ can be viewed as a quaternionic reflection group acting on \(V'\otimes_\BBC\BBH\) with corresponding complex and quaternionic reflection arrangements \(\CA_\BBC\) and \(\CA_\BBH\), respectively.
In this case the Poincar\'e polynomial $P(\CA_\BBH,G; t)$ is identical to \(P(\CA_\BBC, G;t)\), modulo a degree shift, see \Cref{thm:poincarecompred}.

We note also that there are canonical reductions to the case of an essential irreducible reflection arrangement, as in \cite[\S 1.11, \S 3]{douglaspfeifferroehrle:invariants}.

In order to determine the Poincar\'e polynomials $P(\SA,G; t)$ above, we argue similarly as in \cite[(2.7), (2.8)]{douglaspfeifferroehrle:invariants} by induction. For the analogue of Brieskorn's Lemma \cite[Lem.~5.91]{orlikterao:arrangements} in 
the quaternionic setting, see \Cref{cBrieskorn}.
This in turn leads to the Euler characteristic identity
\eqref{eq:euler} which allows for an inductive computation of $P(\SA,G; t)$, as in \emph{loc.~cit.}\ by means of \Cref{Gdecomp}.

In case of quaternionic groups $G$ in dimension $2$, the degree $6$ polynomial $P(\SA,G; t)$ is readily obtained from \eqref{eq:euler} and the fact that 
 $H^3(M(\CA);\BBQ)$ affords the permutation representation of $G$  on \(\CA\), see \Cref{prop:dim2}.

For the higher-dimensional (non-complex) irreducible quaternionic reflection groups, we 
analyze the case of an imprimitive reflection group separately in Section \ref{sec:imprimgroups}; see \Cref{thm:poincareimprim} and
\Cref{cor:poincareimprim}. There are precisely seven additional primitive cases to be considered; they are handled in Section \ref{sec:primgroups}, see also  \Cref{tab:primgroups}. 

Finally, in Section \ref{sec:bases} we study sets of canonical bases for \(H^*(M(\CA))^G\), extending \cite[\S 7]{douglaspfeifferroehrle:invariants}.

Surprisingly, only one additional family of new types of Poincar\'e polynomials $P(\SA,G; t)$ occurs in the quaternionic setting which is not already present in the complex case, namely those of a particular class of imprimitive irreducible quaternionic reflection groups, see \Cref{cor:poincareimprim} \labelcref{cor:poincareimprim:2}.
Thus while the groups in the non-complex quaternionic case are rather different from their complex cousins, the resulting Poincar\'e polynomials $P(\SA,G; t)$ are the same with this one exception. 

A similar phenomenon occurs when we consider the Poincar\'e polynomials of 
  the cohomology of the complement of a  quaternionic reflection arrangement.
It is well known that for complex reflection arrangements the Poincar\'e polynomials of the total space  
$H^*(M(\SA);\BBQ)$ factors into linear terms where the exponents of the underlying reflection arrangement feature as the integer roots of the linear factors.
Recently,  S.~Griffeth and D.~Guevara considered the 
quaternionic analogue \cite{griffethguevara}. Astonishingly, in all but a mere three exceptional instances 
all the Poincar\'e polynomials of 
$H^*(M(\SA);\BBQ)$ also factor into linear terms with positive integer roots. It would be desirable to explain this mysterious analogy to the complex case in the quaternionic setting. 

\subsection*{Acknowledgements}
We thank Stephen Griffeth for comments on a preliminary version of this article.

\section{Preliminaries}
In this section, we recall the relevant definitions and results from quaternionic reflection groups and quaternionic arrangements.

\subsection{Quaternionic reflection groups}
\label{sec:quatreflgroups}

Let \(\BBH\) be the skew-field of quaternions. If needed, 
we write \(\{1,\mathbf i,\mathbf j,\mathbf k\}\) for the standard basis of \(\BBH\) over \(\BBR\) and we write \(\{1,\mathbf j\}\) for the standard basis of \(\BBH\) over \(\BBC\).

Let \(V\) be a finite-dimensional right \(\BBH\)-vector space.
Let \(\GL(V)\) be the group of all invertible \(\BBH\)-linear transformations of \(V\).
We agree that \(\GL(V)\) acts on \(V\) from the left.

\begin{defn}
	\label{def:quatrefl}
  An element \(g\in \GL(V)\) of finite order is a \emph{quaternionic reflection} (or just \emph{reflection}), if \(\rk(1 - g) = 1\), that is, \(g\) fixes a subspace of codimension 1 in \(V\).
  A finite group \(G\leq\GL(V)\) is a \emph{quaternionic reflection group} if \(G\) is generated by quaternionic reflections.
\end{defn}

We call a quaternionic reflection group \(G\leq\GL(V)\) \emph{(quaternionic) irreducible}, if there is no \(G\)-invariant decomposition \(V = V_1\oplus V_2\) into right \(\BBH\)-vector spaces with \(V_i \neq \{0\}\).
The irreducible quaternionic reflection groups are classified by Cohen \cite{cohen:quaternionic}.
Waldron \cite{waldron} and Taylor \cite{Tay25} independently revise part of this classification and notably construct further groups in rank 2.
We give a brief overview of the classification.

First of all, notice that complex reflection groups can naturally be considered as quaternionic reflection groups via extension of scalars.
In this way, an irreducible \emph{complex} reflection group \(G\leq \GL(V')\) acting on a complex vector space \(V'\) gives rise to an irreducible quaternionic reflection group acting on \(V'\otimes_\BBC\BBH\).
Hence the irreducible complex reflection groups classified in \cite{ST54} form a subset of the irreducible quaternionic reflection groups.
The action of \(G\) on \((V'\otimes_\BBC\BBH)|_\BBC\) is (complex) reducible and we consequently call a complex reflection group considered as a quaternionic group a \emph{complex reducible} quaternionic reflection group, see  \Cref{rem:complexify}.

Let \(G\leq \GL(V)\) be an irreducible quaternionic reflection group which is also complex irreducible, so \(G\) is not coming from a complex reflection group.
We call the group \(G\) \emph{imprimitive} if there is a decomposition \(V = V_1\oplus \cdots\oplus V_k\), \(k \geq 2\), into non-trivial spaces \(V_i\) such that the action of every \(g\in G\) on \(V\) permutes the summands \(V_i\).
If no such decomposition exists, then \(G\) is called \emph{primitive}.
The imprimitive irreducible quaternionic reflection groups come in several infinite families in arbitrary dimension \(\dim V \geq 2\); we give a precise description of these groups in \Cref{sec:imprimclass}.
The primitive irreducible quaternionic reflection groups can be divided into infinite families of groups in dimension \(\dim V = 2\) and 13 ``exceptional'' groups in dimension \(2\leq \dim V\leq 5\).
In this article, we focus on the groups with \(\dim V > 2\) because the case \(\dim V = 2\) is trivially handled by \Cref{prop:dim2}.
This means we are mainly concerned with the imprimitive groups and only need to consider the exceptional primitive groups in dimension \(\dim V > 2\), of which there are seven.

While the quaternionic point of view is a natural generalization of complex reflection groups, it is often helpful to turn the quaternionic vector space \(V\) into a complex representation of \(G\) by restriction of scalars.
For the reader's convenience, we give the details of this ``complexification'' construction following Cohen \cite{cohen:quaternionic}.
Consider \(\BBH\) as a right \(\BBC\)-module and choose an  \(\BBH\)-basis \(e_1,\dots,e_n\) of \(V \cong \BBH^n\).
We may write any vector \(v\in V\) as \(v = \sum_{l = 1}^n(x_l + y_l\mathbf j)e_l\) with \(x_l, y_l\in \BBC\) and map $v$ to \[v^\vee := \sum_{l = 1}^nx_l\epsilon_l + \sum_{l = 1}^n\overline{y}_l\epsilon_{l + n}\in V|_{\BBC} \cong \BBC^{2n},\] where \(\epsilon_1,\dots,\epsilon_{2n}\) denotes the standard basis of \(\BBC^{2n}\) and \(\bar{\cdot}\) denotes complex conjugation.
Notice that the complex conjugation in the second component is necessary, if we consider \(\BBH\) as a right \(\BBC\)-module since now we have \[(v\alpha)^\vee = \Big(\sum_{l = 1}^n(x_l\alpha + y_l\overline{\alpha}\mathbf j)e_l\Big)^{\!\!\vee} = \sum_{l = 1}^nx_l\alpha\epsilon_l + \sum_{l = 1}^n\overline{y}_l\alpha\epsilon_{l + n} = v^\vee\alpha\] for every \(\alpha\in \BBC\) as desired.
Similarly, for every matrix \(g \in \GL(V)\), we can write \(g = g_1 + g_2\mathbf j\) with \(g_1,g_2\in \BBC^{n\times n}\) and we map \(g\) to \[g^\vee := \begin{pmatrix} g_1 & -g_2 \\ \overline{g}_2 & \overline{g}_1\end{pmatrix}\in\GL_{2n}(\BBC).\]
Let \(v\in V\) with \(v = v_1 + v_2 \mathbf j\), \(v_1, v_2\in\BBC^n\).
By abuse of notation, we write \(v^\vee = \left(\begin{smallmatrix} v_1\\\overline v_2\end{smallmatrix}\right)\).
  The action of \(\GL(V)\) on \(V\)  is \(^\vee\)-equivariant in the following sense: 
  \[(gv)^\vee = (g_1v_1 - g_2\overline{v}_2 + g_1v_2\mathbf j + g_2\overline{v}_1 \mathbf j)^\vee = \begin{pmatrix} g_1v_1 - g_2\overline v_2 \\ \overline g_1 \overline v_2 + \overline g_2 v_1\end{pmatrix} = g^\vee v^\vee.\]

Without loss of generality, we may assume that \(V = \BBH^n\) and that \(G\) preserves the standard unitary inner product \(\langle \cdot, \cdot\rangle\) on \(V\).
Then \(G^\vee\leq\Sp_{2n}(\BBC)\) preserves the standard symplectic form on \(\BBC^{2n}\), see \cite{cohen:quaternionic} for details.
A complexified quaternionic reflection group is then also called a \emph{symplectic reflection group}. This symplectic point of view is not relevant in the sequel.

\begin{remark}
  \label{rem:complexify}
  Let \(V\) be a \emph{complex} vector space and let \(G\leq\GL(V)\) be a complex reflection group.
  Then we obtain the complexified quaternionic reflection group \[G^\circledast = \left\{\!\begin{pmatrix} g & 0 \\ 0 & \overline{g} \end{pmatrix} \middle |\,g\in G\right\}\leq\GL((V\otimes_\BBC\BBH)|_\BBC).\]
  If we identify \(V = \BBC^n\) and \(G\) preserves the standard unitary inner product on \(V\), then \(\overline g = (g^\top)^{-1}\) and we can consider \(G^\circledast\) as a subgroup of \(\GL(V\oplus V^*)\), where \(V^*\) denotes the dual space of \(V\).
  The isomorphism \((V\otimes_\BBC\BBH)|_\BBC\cong V\oplus V^*\) is given by \[v^\vee = (v_1 + v_2\mathbf j)^\vee \mapsto (v_1, w \mapsto \langle v_2, w\rangle),\] where \(\langle\cdot,\cdot\rangle\) is the standard unitary inner product defined by \(\langle u, w\rangle = \sum_{l = 1}^n\overline u_lw_l\) for vectors \(u = (u_l)_l, w = (w_l)_l\in \BBC^n\).
  We emphasize that \(G\) and \(G^\circledast\) are isomorphic as abstract groups, but we should see them as pairs \((G, V)\) and \((G^\circledast, V\oplus V^*)\) with fixed non-isomorphic complex representations.
\end{remark}

\subsection{Arrangements and their cohomology}
An \emph{arrangement of subspaces} is a pair $(\CA,V)$, where $V$ is a finite-dimensional vector space and $\CA$ is a finite set of linear subspaces of $V$. We omit the ambient space $V$, when it is not relevant, and we call the arrangement real, complex, or quaternionic, if $V$ is a (right) vector space over $\BBR$, $\BBC$, or $\BBH$, respectively. The main combinatorial object associated to $\CA$ is its \emph{lattice of intersections}, that is, the set of all intersections of the subspaces of $\CA$ ordered by reverse inclusion which we denote $L(\CA)$. 
The poset $L(\CA)$ is ranked: 
for \(X\in L(\CA)\), we denote the quaternionic codimension of \(X\) by \(\rk(X)\).
Further, we define 
$\rk(\CA) := \rk(\Cent(\CA))$, where $\Cent(\CA)$ is 
the intersection of all elements of $\CA$.

One of the main geometric objects associated to $(\CA,V)$ is the \emph{complement space} $M(\CA) := V\setminus\cup_{S\in\CA}S$. A recurrent theme in the theory of arrangements is to describe geometric and algebraic properties of $M(\CA)$ in terms of combinatorial properties of $L(\CA)$. When the subspaces are all hyperplanes, $\CA$ is called a \emph{hyperplane arrangement}, its lattice of intersections is a geometric lattice and the integer cohomology ring of $M(\CA)$ is described in terms of the associated matroid by the \emph{Orlik--Solomon algebra}.

For more information on hyperplane arrangements and the Orlik--Solomon algebra we refer to \cite{orlikterao:arrangements}. When the subspaces are not hyperplanes, less is known. For instance, the ring structure of the cohomology was presented uniformly only for certain classes of arrangements. For us, knowing the rational cohomology groups will be sufficient, as we are interested in the cohomology as a $\BBQ G$-module, for some group $G$ acting on $V$ and fixing $\CA$. A nice description of the integer cohomology groups was given by Goresky and MacPherson:

\begin{theorem}[\cite{goreskymacpherson:morse}]
	\label{thm:gm}
  Let $\CA$ be a real subspace arrangement. Then its reduced cohomology groups are described by the formula $$\tilde{H}^k(M(\CA);\mathbb{Z}) \cong \bigoplus_{X\in L(\CA)\setminus \{\hat0\}}\tilde{H}_{\rk(X)-2-k}((\hat0,X);\mathbb{Z}),$$ where the homology on the right hand side refers to the order complex of the interval $(\hat0,X)$ in $L(\CA)$.
\end{theorem}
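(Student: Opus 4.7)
The plan is to follow Goresky and MacPherson's original stratified Morse-theoretic strategy. Stratify $V$ by the arrangement, taking as strata the pieces $X^\circ := X \setminus \bigcup_{Y > X} Y$ for $X \in L(\CA)$, with open top stratum $M(\CA)$, and fix a generic base point $p_0 \in M(\CA)$. The function $f(x) := \|x - p_0\|^2$ is then a proper stratified Morse function whose critical points consist of $p_0$ together with the unique nearest point $p_X$ on $X$ to $p_0$ for each $X \in L(\CA) \setminus \{\hat 0\}$; generically the critical values $c_X := f(p_X)$ are all distinct. Setting $V_{\leq c} = f^{-1}([0,c])$ and $M_{\leq c} = M(\CA) \cap V_{\leq c}$, I would track the homotopy type of $M_{\leq c}$ as $c$ sweeps across each critical value.

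Next I would apply the Main Theorem of stratified Morse theory to describe the change in $M_{\leq c}$ at a critical value $c_X$. The local Morse data at $p_X$ factors as the product of tangential data along $X$ (which is contractible, since $f|_X$ is a positive-definite quadratic near $p_X$ for generic $p_0$) and \emph{normal Morse data}, defined via a small transverse slice $N$ to $X$ at $p_X$. Intersecting $N$ with the subspaces of $\CA$ containing $X$ produces a central subspace arrangement $\CA_X$ on $N$ whose intersection lattice is canonically the interval $[\hat 0, X] \subseteq L(\CA)$. The normal Morse data at $p_X$ is then controlled by the topology of the link of the origin in $M(\CA_X)$, or equivalently a small sphere in $N$ minus its intersection with $\bigcup \CA_X$.

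The crux of the argument is to identify the reduced cohomology of this local link with the shifted reduced homology $\tilde H_{\rk(X) - 2 - k}((\hat 0, X);\BBZ)$ of the order complex of the open interval. I would do this by induction on $\rk(X)$ via a nerve/Mayer--Vietoris argument for the cover of the small sphere in $N$ by the open complements of the atoms of $[\hat 0, X]$: the nerve carries a canonical homotopy equivalence with $\Delta((\hat 0, X))$, and the homological degree shift by $\rk(X) - 2$ comes from the codimension of $X$ together with the single suspension implicit in passing between a sphere and its intersection with a complement. Finally, since the critical values $c_X$ are generically distinct, the successive Morse attachments contribute independently at the level of cohomology, so summing over all $X \in L(\CA) \setminus \{\hat 0\}$ yields the claimed direct-sum decomposition. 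The main technical obstacle is precisely this local-link identification; everything else is routine stratified Morse bookkeeping, and indeed once the link formula is in hand one can equally well package the argument via Alexander duality applied to the compactified singular set $\bigcup \CA$ and the combinatorial homotopy model of Ziegler--\v Zivaljevi\'c.
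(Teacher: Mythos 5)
The paper offers no argument for this statement at all: it is imported verbatim from Goresky--MacPherson, so your sketch has to be judged as a standalone reconstruction of their proof, and as written it has two genuine gaps precisely where the content of the theorem sits. The first is the ``crux'' step you flag yourself, but your proposed fix is wrong as formulated: you want to recover $\tilde H_*(\Delta((\hat 0,X)))$ from the nerve of the cover of the small sphere $S$ in the normal slice $N$ by the open sets $S\setminus A_i$, where the $A_i$ are the atoms of $[\hat 0,X]$. Every finite intersection of these opens is $S\setminus (A_{i_1}\cup\cdots\cup A_{i_r})$, which is nonempty because a finite union of proper linear subspaces never exhausts $N$; hence that nerve is a full simplex and is contractible, not a model for $\Delta((\hat 0,X))$, and the nerve lemma is inapplicable anyway since these intersections are themselves complements of subarrangements on a sphere and are not contractible. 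The identification that actually works is the one in the cited sources: model the link $S\cap\bigcup\CA_X$ of the \emph{union} (Goresky--MacPherson's local computation, or Ziegler--\v Zivaljevi\'c's homotopy colimit decomposition into a wedge of joins of order complexes with spheres) and then pass to the complement by Alexander duality in $S$. You mention this only as an optional repackaging at the end, but in your plan it must replace, not supplement, the nerve step.

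The second gap is the closing claim that because the critical values $c_X$ are generically distinct, ``the successive Morse attachments contribute independently at the level of cohomology, so summing over all $X$ yields the claimed direct-sum decomposition.'' Distinct critical values only produce a filtration of $M(\CA)$ and long exact sequences of pairs; since many flats contribute in the \emph{same} cohomological degree (all flats of a fixed codimension do), there is a real splitting/extension problem over $\BBZ$, and Goresky--MacPherson need a separate argument for it (in the link-plus-Alexander-duality packaging the splitting comes from the wedge decomposition before dualizing, which is one reason that route is cleaner). So your overall strategy --- distance function from a generic point, tangential/normal factorization, local links over the intervals $(\hat 0,X)$ --- is indeed the strategy of the source the paper cites, but the two steps above are exactly where the theorem is proved, and in your sketch one is incorrect and the other is asserted rather than argued.
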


This result has many useful consequences, we list some that we need.

\begin{defn}
  Let $c$ be a positive integer. A subspace arrangement $\CA$ is called a $c$\emph{-arrangement} provided
  \begin{itemize}
    \item $\codim(S) = c$ for all subspaces $S\in\CA$,
    \item $c$ divides $\codim(X)$ for all $X\in L(\CA)$.
  \end{itemize}
  The lattice of intersections of a $c$-arrangement is a geometric lattice with rank function either $\codim: L(\CA)\rightarrow\mathbb{Z}_{\geq 0}$ or $\rk := \frac{1}{c}\codim$. The latter can be thought of as the rank function of the underlying abstract lattice; it is used in lattice homology. We write \(L(\CA)_k\) for the members of \(L(\CA)\) of rank \(k\).
\end{defn}

\begin{example}
  Let $\CA$ be a complex hyperplane arrangement. Then $\CA$ is a real $2$-arrange\-ment by restriction of scalars.
  Likewise, a quaternionic hyperplane arrangement \(\CB\) can be considered as a complex 2-arrangement or a real 4-arrangement.
  We use the notation \(\CB|_\BBC\) or \(\CB|_\BBR\) to indicate that we consider \(\CB\) as a complex or real subspace arrangement.
  Clearly, not all real $2$ and $4$-arrangements come from complex or quaternionic hyperplane arrangements.
\end{example}

The following is a classical result on the homology of geometric lattices:

\begin{lemma}
	\label{lem:order}
	Let $L$ be a geometric lattice, and $X\in L\setminus \{\hat0\}$. Then the order complex associated to the open interval $(\hat0, X)$ has the homotopy type of a wedge of $|\mu(X)|$ spheres of dimension $\rk(X)-2$. In particular, $$\tilde{H}_k((\hat0,X);\mathbb{Z})\cong\begin{cases} \mathbb{Z}^{|\mu(X)|} &\text{ if } k = \rk(X)-2, \\ 0 &\text{ otherwise.} \end{cases}$$
\end{lemma}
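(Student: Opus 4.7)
The plan is to invoke the classical theorem of Folkman (1966) on the homology of geometric lattices, modernized via Björner's shellability approach. First I would reduce to the top-interval case: since an interval in a geometric lattice is again geometric, the subposet $[\hat 0, X]$ is a geometric lattice of rank $\rk(X)$, and $(\hat 0, X)$ is its proper part. Thus it suffices to show that for any geometric lattice $L$ of rank $n \geq 1$, the order complex $\Delta(\bar L)$ of the proper part $\bar L = L\setminus\{\hat 0,\hat 1\}$ has the homotopy type of a wedge of $|\mu_L(\hat 0,\hat 1)|$ spheres of dimension $n-2$.

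The key step is to exhibit an EL-labeling of $L$. Order the atoms of $L$ as $a_1 < a_2 < \cdots < a_r$ and label each cover relation $y \lessdot z$ by $\min\{i : y \vee a_i = z\}$. A short verification using the fact that $L$ is semimodular and atomistic shows that in every closed interval $[u,v]$ there is exactly one maximal chain whose label sequence is strictly increasing, and that this chain has the lexicographically smallest label sequence. This is Björner's classical EL-labeling of a geometric lattice.

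From the EL-labeling I would deduce shellability of $\Delta(\bar L)$ by ordering the maximal chains (the facets) lexicographically by their label sequences. A pure shellable simplicial complex of dimension $d$ has the homotopy type of a wedge of $d$-spheres, one for each facet whose ``restriction face'' equals the whole facet; under an EL-labeling, these correspond bijectively to the maximal chains in $L$ whose label sequence is strictly \emph{decreasing}. By Philip Hall's theorem applied to the EL-labeling (equivalently by the Björner--Wachs identification of descending chains with a basis of the top reduced homology), the number of such strictly decreasing maximal chains equals $|\mu_L(\hat 0,\hat 1)|$. The homology statement then follows at once, since $\tilde H_k$ of a wedge of $m$ spheres of dimension $d$ is $\mathbb Z^m$ concentrated in degree $d$.

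The main obstacle, if any, is verifying that the proposed edge labeling is indeed an EL-labeling; every other step is formal. But this verification is a well-documented computation relying only on the exchange property for atoms in a geometric lattice, so I would simply cite it from Björner's standard treatment rather than redo it.
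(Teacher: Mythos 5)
Your argument is correct, and it is the standard modern proof of this classical fact. Note that the paper itself offers no proof at all: it states the lemma as a well-known result on geometric lattices (it is Folkman's theorem, usually proved nowadays exactly as you do, via Bj\"orner's EL-shellability), so there is nothing to compare step by step. Your outline has all the right ingredients: intervals of a geometric lattice are geometric, so one may assume $X=\hat 1$; the labeling $\lambda(y\lessdot z)=\min\{i: y\vee a_i=z\}$ is Bj\"orner's EL-labeling (the verification uses semimodularity and atomicity, and citing it is fine); lexicographic shelling of $\Delta(\hat 0,\hat 1)$ gives a wedge of top-dimensional spheres indexed by the facets with full restriction, i.e.\ the decreasing maximal chains; and the EL-version of Philip Hall's theorem identifies their number with $|\mu(\hat 0,\hat 1)|$, since $\mu$ alternates in sign on a geometric lattice. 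The only points worth making explicit are the degenerate ranks: for $\rk(X)=1$ the open interval is empty and the statement is the convention that the empty complex is a wedge of $|\mu(X)|=1$ spheres of dimension $-1$ (so $\tilde H_{-1}\cong\mathbb Z$), and for $\rk(X)=2$ one gets a wedge of $0$-spheres; neither affects the argument, but a careful write-up should mention them.
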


Part \labelcref{cBrieskorn:1} of the following result is an analogue of Brieskorn's Lemma \cite[Lem.~5.91]{orlikterao:arrangements} for real $c$-arrangements.

\begin{prop}
  \label{cBrieskorn}
  Let $\CA$ be a real $c$-arrangement, for $c\geq 2$. 
  \begin{enumerate}[(1)]
    \item\label{cBrieskorn:1}
      For $X\in L(\CA)_k$, the inclusions $M(\CA) \subseteq M(\CA_X)$ induce  isomorphisms
      $$\tilde{H}^k(M(\CA);\mathbb{Z})\cong\bigoplus_{X\in L(\CA)_n} \tilde{H}^{k}(M(\CA_X);\mathbb{Z}),$$ where $n = \frac{k}{c-1}\in\mathbb{Z}$. 
    \item\label{cBrieskorn:2} $\tilde{H}^k(M(\CA);\mathbb{Z})\neq 0$ only if $c-1$ divides $k$.
  \end{enumerate}
\end{prop}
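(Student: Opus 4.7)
The plan is to derive both parts from the Goresky--MacPherson formula (Theorem~\ref{thm:gm}) combined with the concentration of lattice homology in top degree for geometric lattices (Lemma~\ref{lem:order}). The key observation is that, for a $c$-arrangement, the real codimension of $X\in L(\CA)$ equals $c\cdot\rk(X)$, where $\rk$ is the rank function of the underlying geometric lattice.

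Applied to $M(\CA)$, Theorem~\ref{thm:gm} produces the decomposition
\[
\tilde H^k(M(\CA);\mathbb{Z})\;\cong\;\bigoplus_{X\in L(\CA)\setminus\{\hat 0\}}\tilde H_{\codim_\BBR(X)-2-k}((\hat 0,X);\mathbb{Z}).
\]
By Lemma~\ref{lem:order}, the $X$-summand is nonzero only in the single lattice-homology degree $\rk(X)-2$, forcing $c\cdot\rk(X)-2-k=\rk(X)-2$, equivalently $k=(c-1)\rk(X)$. In particular $(c-1)\mid k$, which is \labelcref{cBrieskorn:2}. Writing $k=(c-1)n$, the same formula becomes
\[
\tilde H^k(M(\CA);\mathbb{Z})\;\cong\;\bigoplus_{X\in L(\CA)_n}\tilde H_{n-2}((\hat 0,X);\mathbb{Z}).
\]
Repeating the argument for the localised arrangement $\CA_X$, whose intersection lattice is the interval $[\hat 0,X]\subseteq L(\CA)$ with $X$ as top element and which remains a $c$-arrangement, the same concentration shows that in degree $k=(c-1)n$ only the top element contributes, giving
\[
\tilde H^k(M(\CA_X);\mathbb{Z})\;\cong\;\tilde H_{n-2}((\hat 0,X);\mathbb{Z}).
\]
Combining the two displays identifies the right-hand side of \labelcref{cBrieskorn:1} at the level of abelian groups.

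The remaining and more delicate step is to verify that this identification is realised by the pullback maps along the inclusions $M(\CA)\subseteq M(\CA_X)$. This is a naturality property of the Goresky--MacPherson decomposition under inclusions of $c$-arrangements: for $\CA'\subseteq\CA$, the pullback $\tilde H^*(M(\CA');\mathbb{Z})\to\tilde H^*(M(\CA);\mathbb{Z})$ is compatible with the canonical inclusion of indexing lattices $L(\CA')\hookrightarrow L(\CA)$ summand by summand. Applied to $\CA_X\subseteq\CA$, in cohomological degree $(c-1)n$ the unique surviving summand on the $\CA_X$ side (indexed by $X$) maps isomorphically onto the $X$-summand on the $\CA$ side, and summing the pullbacks over all $X\in L(\CA)_n$ yields the desired isomorphism. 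I expect this naturality verification to be the main technical point; the vanishing statement \labelcref{cBrieskorn:2} and the abstract group identification in \labelcref{cBrieskorn:1} are immediate from the degree count above.
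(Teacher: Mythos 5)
Your argument is essentially the paper's own proof: both deduce everything from the Goresky--MacPherson formula (Theorem~\ref{thm:gm}) together with the top-degree concentration of lattice homology for geometric lattices (Lemma~\ref{lem:order}), using $\codim = c\cdot\rk$ to force $k=(c-1)\rk(X)$, and both identify $\tilde H^{k}(M(\CA_X);\mathbb{Z})\cong\mathbb{Z}^{|\mu(X)|}$ by applying Goresky--MacPherson to $\CA_X$ in top degree. The naturality under the inclusions $M(\CA)\subseteq M(\CA_X)$ that you flag as the delicate step is treated just as implicitly in the paper, so your proposal is correct and matches the paper's route.
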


\begin{proof}
  The second statement is an immediate consequence of the first. The first statement is a direct consequence of Goresky--MacPherson's isomorphism from \Cref{thm:gm} and the properties of the lattices of intersections of $c$-arrangements. By \Cref{lem:order}, the interval $(\hat0,X)$ has nontrivial reduced homology only in degree $\rk(X)-2$. Thus, a nontrivial contribution in the right hand side of the isomorphism appears only when $$\codim(X)-2-k = \rk(X)-2,$$ that is, since $\rk = \frac{1}{c}\codim$, for $$\rk(X) = \frac{k}{c-1}.$$ Thus, the Goresky--MacPherson formula becomes 
  $$
  \tilde{H}^k(M(\CA);\mathbb{Z})\cong\bigoplus_{X\in L(\CA)_n} \mathbb{Z}^{|\mu(X)|}\cong\bigoplus_{X\in L(\CA)_n}\tilde{H}^{k}(M(\CA_X);\mathbb{Z}),
  $$
  where $n = \frac{k}{c-1}\in\mathbb{Z}$. 
  The second isomorphism and hence the claim follows from the isomorphism $\mathbb{Z}^{|\mu(X)|} \cong H^{k}(M(\CA_X))$ which again is the Goresky--MacPherson formula in top degree.
\end{proof}

\subsection{Cohomology of the complement for quaternionic hyperplane arrangements}
Let \(\CA\) be a quaternionic hyperplane arrangement in \(\BBH^n\).
We always consider the complement of \(\CA\) in \(\BBH^n\) as a complex space, that is, we study the complement \(M(\CA|_\BBC)\) of the 2-arrangement \(\CA|_\BBC\) inside \(\BBC^{2n}\).
By abuse of notation, we write \(M(\CA) = M(\CA|_\BBC)\).

The cohomology ring \(H^*(M(\CA);\BBZ)\) has an Orlik--Solomon-like presentation, by \cite[Prop.~4]{Sch15}, \cite[Prop.~7]{CS18}.
Because, to our knowledge,  this result is not available within a peer-reviewed publication, we give the following independent proof.
Our argument is a direct consequence of the more general result in \cite[Cor.~5.6]{LS01}.
For two elements \(h, h'\in H^*(M(\CA);\BBZ)\), we frequently write \(hh'\) for the product \(h\wedge h'\).
\begin{prop}
  \label{cohomologyquaternionicarrangement}
  Let \(\CA = \{H_1,\dots,H_m\}\) be a quaternionic hyperplane arrangement.
  The integral cohomology ring of the complex complement \(M(\CA) = M(\CA|_\BBC)\) has the presentation \[\begin{tikzcd} 0 \arrow{r} & I \arrow{r} & \Lambda(\BBZ^m) \arrow["\pi"]{r} & H^*(M(\CA);\BBZ) \arrow{r} & 0\end{tikzcd}\] with \(\pi(e_i) \in H^3(M(\CA);\BBZ)\) for the canonical basis \(\{e_1,\dots, e_m\}\) of \(\BBZ^m\).
  The ideal \(I\) of relations is generated by \[\sum_{i = 0}^k(-1)^ie_{a_0}\wedge\cdots\wedge\widehat{e_{a_i}}\wedge\cdots\wedge e_{a_k},\] for all minimal dependent sets \(\{H_{a_0},\dots, H_{a_k}\}\subseteq \CA\).
\end{prop}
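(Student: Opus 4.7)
The plan is to derive the presentation as a specialisation of the Orlik--Solomon-type theorem for real \(c\)-arrangements of \cite{LS01}. First I would note, via the Example preceding the statement, that \(\CA|_\BBR\) is a real \(4\)-arrangement: each hyperplane \(H_i\) has real codimension \(4\) in \(V|_\BBR\) and every \(X\in L(\CA)\) has real codimension a multiple of \(4\), so \(L(\CA)\) is a geometric lattice with rank function \(\rk=\tfrac14\codim\).

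Next I would pin down the additive structure in the degree where the generators live. Applying \Cref{cBrieskorn} with \(c=4\), \(H^k(M(\CA);\BBZ)\) vanishes outside degrees divisible by \(3\) and for \(k=3n\) one has
\[
H^{3n}(M(\CA);\BBZ)\;\cong\;\bigoplus_{X\in L(\CA)_n} H^{3n}(M(\CA_X);\BBZ).
\]
For \(n=1\) each \(X\in L(\CA)_1\) is a single hyperplane \(H_i\) and \(M(\CA_{H_i})\simeq S^3\), which identifies \(H^3(M(\CA);\BBZ)\) freely with \(\BBZ^m\) and furnishes the generators \(\pi(e_i)\) in the claimed degree; remembering that the exterior algebra \(\Lambda(\BBZ^m)\) is graded with \(\deg e_i = 3\) (odd), the usual exterior-algebra signs are the correct ones for a product of odd-degree classes.

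To obtain the relations I would invoke \cite[Cor.~5.6]{LS01}, which produces an Orlik--Solomon-type presentation for the integral cohomology ring of the complement of any real \(c\)-arrangement whose intersection lattice is geometric: the generators sit in degree \(c-1\) and are indexed by the atoms of \(L(\CA)\), and the relations are the usual alternating boundary sums over the minimally dependent subsets of atoms. Matching this with the additive computation above yields the short exact sequence in the statement.

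The main obstacle, if one wished to avoid citing \cite{LS01}, is the multiplicative structure, since \Cref{cBrieskorn} is purely additive. A self-contained argument would require a Brieskorn-type result showing that the natural restrictions \(H^*(M(\CA_X);\BBZ)\hookrightarrow H^*(M(\CA);\BBZ)\) are injective and multiplicative, together with a proof that products of the degree-\(3\) generators satisfy exactly the alternating boundary relations attached to minimally dependent sets of hyperplanes and that no further relations are needed — this is precisely what the cited corollary encapsulates.
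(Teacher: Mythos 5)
Your overall strategy --- view \(\CA|_\BBR\) as a real \(4\)-arrangement and apply \cite[Cor.~5.6]{LS01} --- is the same route the paper takes, but your proposal overstates what that corollary delivers and thereby skips the entire substantive content of the paper's proof. The relations produced by \cite[Cor.~5.6]{LS01} are \emph{not} ``the usual alternating boundary sums'': they carry extra signs \(\epsilon(a_0,\dots,\widehat{a_i},\dots,a_k)\in\{\pm1\}\) attached to each minimal dependent set, determined by orientation data (by \cite[Rem.~5.7]{LS01}, the degree of a linear isomorphism between certain quotient spaces). For a general real \(c\)-arrangement with geometric intersection lattice these signs genuinely matter and cannot be normalized away --- this is exactly the phenomenon behind Ziegler's example \cite{Zie93} of real \(2\)-arrangements whose complement cohomology ring is not the Orlik--Solomon algebra of the lattice. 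So quoting the corollary as if it already gave the sign-free Orlik--Solomon relations is a gap, not a shortcut.

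What the paper's proof actually does, and what your argument is missing, is the verification that for \(4\)-arrangements coming from quaternionic hyperplane arrangements all these signs equal \(+1\). This is done by writing each quaternionic linear form and each coefficient of a quaternionic dependency in real coordinates, observing that the change of basis on each \(4\)-dimensional quotient \(V/H_{a_j}\) induced by the dependency is given by the real \(4\times4\) matrix of multiplication by a nonzero quaternion \(\alpha_j\), and computing its determinant explicitly to be \((\sum_l(\alpha_j^{(l)})^2)^2=|\alpha_j|^4>0\); hence the relevant isomorphisms \(\pi_i\) have positive degree and \(\epsilon=+1\) throughout (the analogue of Ziegler's argument for complex \(2\)-arrangements). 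Your additive preliminaries via \Cref{cBrieskorn} are correct but not the issue; to make the proposal a proof you must supply this positivity-of-determinant step (or an equivalent orientation argument), since without it the presentation you assert is not what \cite{LS01} gives you.
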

\begin{proof}
  We consider \(\CA\) as a real 4-arrangement by restriction of scalars and use \cite[Cor.~5.6]{LS01} to obtain the desired presentation.
  However, the relations given in \cite{LS01} are \[\sum_{i = 0}^k(-1)^i\epsilon(a_0,\dots,\widehat{a_i},\dots,a_k)e_{a_0}\wedge\cdots\wedge\widehat{e_{a_i}}\wedge\cdots\wedge e_{a_k}\] with additional signs \(\epsilon(a_0,\dots,\widehat{a_i},\dots,a_k)\in\{\pm1\}\) and it remains to prove that we in fact have \(\epsilon(a_0,\dots,\widehat{a_i},\dots,a_k) = +1\) for 4-arrangements coming from a quaternionic arrangement.

  For this, let \(\{H_{a_0},\dots,H_{a_k}\}\subseteq \CA\) be a minimal dependent set.
  To avoid a cluttered notation, we write \(H_{a_j}\) for both the quaternionic hyperplanes and the real subspaces \(H_{a_j}|_\BBR\).
  Let \(V\) be the real vector space associated to the 4-arrangement \(\CA|_\BBR\).
  By \cite[Rem.~5.7]{LS01}, the sign \(\epsilon(a_0,\dots,\widehat{a_i},\dots,a_k)\) is given by the degree of the linear isomorphism \[\pi_i:V/(H_{a_0}\cap\cdots\cap\widehat{H_{a_i}}\cap\cdots\cap H_{a_k}) \to V/H_{a_0}\times\cdots\times\widehat{V/H_{a_i}}\times\cdots\times V/H_{a_k}.\]
  Our computation of this determinant is now similar to the argument in \cite[Thm.~4.1]{Zie93}.

  Every quaternionic hyperplane \(H_{a_j}\) is given by a linear form \(f_j:V\otimes_\BBR\BBH\to \BBH\), which we can decompose as \(f_j = f_j^{(1)} + f_j^{(2)}\mathbf i + f_j^{(3)}\mathbf j + f_j^{(4)}\mathbf k\) with real linear forms \(f_j^{(l)}:V\to \BBR\).
  As \(\{H_{a_0},\dots,H_{a_k}\}\) is a dependent set, there are elements \(0\neq \alpha_j\in \BBH\) with \(\sum_{j = 0}^k\alpha_jf_j = 0\).
  Restricting to \(\BBR\) again, we have \(\alpha_j = \alpha_j^{(1)} + \alpha_j^{(2)}\mathbf i + \alpha_j^{(3)}\mathbf j + \alpha_j^{(4)}\mathbf k\) with \(\alpha_j^{(l)}\in \BBR\).
  Consider the matrices
  \[A_j := \begin{pmatrix}
    \alpha_j^{(1)} & \alpha_j^{(2)} & \alpha_j^{(3)} & \alpha_j^{(4)}\\
    -\alpha_j^{(2)} & \alpha_j^{(1)} & \alpha_j^{(4)} &-\alpha_j^{(3)}\\
    -\alpha_j^{(3)} &-\alpha_j^{(4)} & \alpha_j^{(1)} & \alpha_j^{(2)}\\
    -\alpha_j^{(4)} & \alpha_j^{(3)} &-\alpha_j^{(2)} & \alpha_j^{(1)}
  \end{pmatrix}\]
  and let \[\begin{pmatrix} x_j^{(1)} \\ x_j^{(2)} \\ x_j^{(3)} \\ x_j^{(4)} \end{pmatrix} := A_j\begin{pmatrix} f_j^{(1)} \\ f_j^{(2)} \\ f_j^{(3)} \\ f_j^{(4)} \end{pmatrix}\] with \(0\leq j\leq k\).
    We obtain the four real dependencies \(\sum_{j = 0}^k x_j^{(l)} = 0\) for \(l \in \{1, \dots, 4\}\).
  (Notice that elements of \(\BBH\) act conjugated on the dual space \((V\otimes_\BBR\BBH)^*\).)

  The families \(\{f_j^{(1)},\dots,f_j^{(4)}\}\) give bases of the quotient spaces \(V/H_{a_j}\) via the isomorphism \(V\cong V^*\).
  The family \(\{x_i^{(1)},x_i^{(2)}, x_i^{(3)}, x_i^{(4)}\}\) is linearly independent for every \(0\leq i\leq k\) because \(H_{a_i}\) is of codimension 4 in \(V\).
  As \(\{H_{a_0},\dots, H_{a_k}\}\) is a \emph{minimal} dependent set, we therefore have a linearly independent family  \(B_i := \{x_j^{(l)}\mid j\neq i,\ 1\leq l\leq 4\}\) for every \(i\).
  Hence, \(B_i\) gives a basis for the quotient space \(V/(H_{a_0}\cap\cdots\cap\widehat{H_{a_i}}\cap\cdots\cap H_{a_k})\).
  Write \(d_i := \det A_i\).
  Then the determinant of the linear map \(\pi_i\) is given by \(d_0^{-1}\cdots\widehat{d_i^{-1}}\cdots d_k^{-1}\).
  A direct computation gives
  \begin{align*}
    d_i &= (\alpha_i^{(1)})^4 + 2(\alpha_i^{(1)})^2(\alpha_i^{(2)})^2 + 2(\alpha_i^{(1)})^2(\alpha_i^{(3)})^2 + 2(\alpha_i^{(1)})^2(\alpha_i^{(4)})^2\\
    &+ (\alpha_i^{(2)})^4 + 2(\alpha_i^{(2)})^2(\alpha_i^{(3)})^2 + 2(\alpha_i^{(2)})^2(\alpha_i^{(4)})^2\\
    &+ (\alpha_i^{(3)})^4 + 2(\alpha_i^{(3)})^2(\alpha_i^{(4)})^2 + (\alpha_i^{(4)})^4,
  \end{align*}
  so \(d_i > 0\) and we conclude \(\epsilon(a_0,\dots,\widehat{a_i},\dots,a_k) = +1\) for all \(i\).
\end{proof}

Thus, the integer cohomology of complements of quaternionic arrangements is still isomorphic to an Orlik--Solomon algebra,  only with generators in degree $3$. Brieskorn's Lemma for quaternionic arrngements can then be deduced directly from \Cref{cohomologyquaternionicarrangement} together with \cite[Lem.~5.91]{orlikterao:arrangements}. Finally, we give explicit generators for the cohomology of quaternionic arrangements.

\begin{defn}
	Let $(\CA,V)$ be a quaternionic arrangement, and $H\in\CA$ with $H = \ker(\alpha_H)$. By abuse of notation, denote with $\alpha_H:V\setminus H\to \BBH^\times$ the restricted map $\alpha_H|_{V\setminus H}$. The map $\alpha_H$ and the inclusion $\iota_H: M(\CA)\hookrightarrow V\setminus H$ induce maps in cohomology:
	\begin{align*}
		& \alpha_H^*:H^*(\mathbb{H}^\times;\mathbb{Z})\rightarrow H^*(V\setminus H;\mathbb{Z}), \text{ and }\\
		& \iota_H^* : H^*(V\setminus H;\mathbb{Z}) \rightarrow H^*(M(\CA);\mathbb{Z}).
	\end{align*}
	Notice that $\mathbb{H}^\times$ is homeomorphic to $\mathbb{C}^2\setminus\{0\}$ and homotopically equivalent to $S^3$. Let $\omega$ be a generator of $H^3(\mathbb{H}^\times;\mathbb{Z})$. Similarly to what is done for complex arrangements in \cite{orlikterao:arrangements}, define $$e_H := \iota_H^*\alpha_H^*(\omega)\in H^3(M(\CA);\mathbb{Z})$$ as the generator corresponding to the hyperplane \(H\in\CA\).
\end{defn}

\section{Quaternionic reflection arrangements}
From this section onward, we consider cohomology of complement spaces with rational coefficients: we see from \Cref{cohomologyquaternionicarrangement} that there is no torsion with integer coefficients, so by the universal coefficient theorem, we have $H^*(M(\CA);\mathbb{Q}) \cong H^*(M(\CA);\mathbb{Z})$. If we don't make coefficients explicit, we always work in $\mathbb{Q}$. 

Throughout, let \(V\) be a finite-dimensional right \(\BBH\)-vector space and let \(G\leq \GL(V)\) be a quaternionic reflection group.

\subsection{Reflection arrangements}
\label{sec:reflarr}
In the following, we denote by \(\Fix(g)\) the pointwise fixed space of an element \(g\in \GL(V)\).

\begin{defn}
	\label{def:quatreflarr}
  Let \(G\leq \GL(V)\) be a quaternionic reflection group.
  We call the set \[\CA(G) := \{\Fix(g) \mid g\in G\text{ quaternionic reflection}\}\] the \emph{(quaternionic) reflection arrangement} of \(G\).
\end{defn}

We have the following direct analogue for quaternionic reflection arrangements of the well-known result \cite[Thm.~6.27]{orlikterao:arrangements}.
\begin{theorem}
  \label{thm:paratoint}
  Let \(G\leq \GL(V)\) be a quaternionic reflection group.
  \begin{enumerate}[(1)]
    \item\label{thm:paratoint:1} If \(g\in G\), then \(\Fix(g)\in L(\CA(G))\).
    \item If \(X\in L(\CA(G))\), then there exists \(g\in G\) with \(\Fix(g) = X\).
  \end{enumerate}
\end{theorem}
\begin{proof}
  The first claim follows as in \cite[Thm.~6.27]{orlikterao:arrangements} using \cite{BST23}.
  For the second claim, one may use the argument from \cite{orlikterao:arrangements} with a minor modification.
  Namely, the stabilizer \(G_H\) of a hyperplane \(H\in\CA(G)\) is not necessarily cyclic.
  However, one may choose the elements \(s_i\) in \cite[Thm.~6.27]{orlikterao:arrangements} to be some elements of \(G\) with the corresponding fixed spaces; the condition that they generate the stabilizers is not required.
\end{proof}

Let \(\CA = \CA(G)\) be the reflection arrangement of \(G\).
The action of \(G\) on \(V\) induces an action of \(G\) on \(\CA\).
Precisely, for $H_r = \Fix(r)\in \CA(G)$, the hyperplane corresponding to a reflection \(r\in G\), we have \(g.H_r = H_{grg^{-1}}\) for \(g\in G\).
This action extends to an action of \(G\) on the lattice of intersections \(L(\CA)\).

We call a subgroup \(P\leq G\) a \emph{parabolic subgroup} of \(G\) if \(P\) is the pointwise stabilizer in \(G\) of a subset of \(V\).
By \cite{BST23}, a parabolic subgroup is again a quaternionic reflection group.
The set of parabolic subgroups of \(G\) is partially ordered by inclusion and hence forms a poset, which we denote by \(\mathcal P(G)\).
The group \(G\) acts on \(\mathcal P(G)\) by conjugation.

We have the following consequence of \Cref{thm:paratoint}.
\begin{corollary}
  \label{cor:paratoint}
  Let \(G\) be a quaternionic reflection group and let \(\CA\) be the reflection arrangement of \(G\).
  There is a \(G\)-equivariant isomorphism of lattices \(L(\CA) \cong \mathcal P(G)\).
\end{corollary}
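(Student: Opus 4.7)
The plan is to construct an explicit pair of mutually inverse, order-preserving, $G$-equivariant maps. Define $\Phi \colon L(\CA) \to \mathcal P(G)$ by sending $X$ to its pointwise stabilizer $G_X$ in $G$, which is a parabolic subgroup by definition. Define the candidate inverse $\Psi \colon \mathcal P(G) \to L(\CA)$ by $\Psi(P) = \Fix(P) = \bigcap_{g \in P} \Fix(g)$.

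The first check is that $\Psi$ has the claimed codomain. By the Steinberg-type result for quaternionic reflection groups already cited from \textsf{BST23}, every parabolic subgroup $P \leq G$ is itself a quaternionic reflection group, hence generated by the set of reflections $r_1,\ldots,r_k$ it contains. Therefore $\Fix(P) = \bigcap_{i=1}^k \Fix(r_i)$ is an intersection of members of $\CA(G)$, so it lies in $L(\CA)$. Next I would verify that the two maps are mutually inverse. For $X \in L(\CA)$, the inclusion $X \subseteq \Fix(G_X)$ is immediate; conversely, \Cref{thm:paratoint} provides some $g \in G$ with $\Fix(g) = X$, and since $g \in G_X$ we obtain $\Fix(G_X) \subseteq \Fix(g) = X$, yielding $\Psi\Phi(X) = X$. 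For $P \in \mathcal P(G)$, write $P = G_Y$ for some subset $Y \subseteq V$; then $Y \subseteq \Fix(P)$, so $G_{\Fix(P)} \subseteq G_Y = P$, while $P \subseteq G_{\Fix(P)}$ is immediate from the definitions.

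Finally, order-preservation is direct: if $X \leq Y$ in $L(\CA)$, meaning $X \supseteq Y$ as subspaces, then fixing $X$ pointwise is a stronger condition than fixing $Y$, so $G_X \subseteq G_Y$ in $\mathcal P(G)$. Equivariance reduces to the identity $G_{gX} = g G_X g^{-1}$, which follows at once from the definition of the pointwise stabilizer. The only nontrivial input is the statement that parabolic subgroups of $G$ are themselves reflection groups; without it, it would be unclear that $\Fix(P)$ lies in $L(\CA)$. This is the main obstacle, and it is handled entirely by the cited result from \textsf{BST23}, after which the proof is a routine transcription of the classical argument for complex reflection groups.
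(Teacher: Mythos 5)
Your proof is correct and takes essentially the same approach as the paper: the mutually inverse \(\Fix\)/pointwise-stabilizer correspondence, with bijectivity supplied by \Cref{thm:paratoint} and \(G\)-equivariance from \(\Fix(gPg^{-1}) = g\Fix(P)\). The only cosmetic difference is that you justify \(\Fix(P)\in L(\CA)\) via the cited fact that parabolic subgroups are themselves reflection groups, whereas the paper deduces it from \Cref{thm:paratoint} directly; both are valid.
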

\begin{proof}
  By \Cref{thm:paratoint}, the fixed space of any parabolic subgroup of \(G\) is an element of \(L(\CA)\).
  On the other hand, for any \(X\in L(\CA)\), there is \(g\in G\) with \(\Fix(g) = X\) by the theorem again.
  So, taking the pointwise stabilizer of \(X\) in \(G\) gives a parabolic subgroup \(P_X\) with fixed space \(X\).
  All in all, we have an order preserving bijection between the poset \(\mathcal P(G)\) and the lattice \(L(\CA)\).
  This isomorphism is \(G\)-equivariant because \(\Fix(gPg^{-1}) = g\Fix(P)\) for every \(g\in G\) and every parabolic subgroup \(P\leq G\).
\end{proof}

\subsection{Invariants}
\label{sec:invariants}

The action of \(G\) on \(\CA\) induces an action of \(G\) on the cohomology spaces \(H^*(M(\CA);\BBQ)\), as follows.
Here and in the following, we simply write \(hh'\) for the product \(h\wedge h'\) of elements \(h, h'\in H^*(M(\CA), \BBQ)\).
By \Cref{cohomologyquaternionicarrangement}, we have that $H^3(M(\CA);\BBQ)$ is the $\BBQ$-vector space with basis $e_H$ for $H\in\CA$.
Hence $G$ acts on \(H^*(M(\CA);\BBQ)\) by permuting the generators \(e_H\) via 
\begin{align*}g.e_H &= e_{g.H}, \\ g.(e_{H_1}\cdots e_{H_k}) &= (g.e_{H_1})\cdots (g.e_{H_k}),
\end{align*} 
for \(H, H_1,\dots, H_k\in \CA\).
In particular, $H^3(M(\CA);\BBQ)$ affords the permutation representation of $G$  on \(\CA\), and the cohomology is endowed with a $\BBQ G$-module structure.

Let $\mathcal{X}(\CA,G)$ be a set of representatives of the orbits of the action of $G$ on $L(\CA)$.
Notice that the action of \(G\) on \(L(\CA)\) maintains the rank and let $\mathcal{X}(\CA,G)_k = \mathcal{X}(\CA,G)\cap L(\CA)_k$, where 
\(L(\CA)_k\) denotes the members of \(L(\CA)\) of rank \(k\), i.e.~of quaternionic codimension \(k\).

The following result is an analogue of \cite[Prop.~2.5]{douglaspfeifferroehrle:invariants} for quaternionic reflection arrangements adapted to our purposes.
We write 
\[\epsilon_G := \frac{1}{|{G}|}\sum_{g\in G}g\in \BBQ G\]
 for the primitive idempotent in \(\BBQ G\).

\begin{prop}
  \label{Gdecomp}
  Let \(n = \dim V\) and $k \in\{0,\ldots,n\}$.
  \begin{enumerate}[(1)]
    \item\label{Gdecomp:1} For $X\in L(\CA)_k$, the inclusion $M(\CA) \subseteq M(\CA_X)$ induces isomorphisms of $\BBQ G$-modules:
      \begin{equation*}
        H^{3k}(M(\CA);\BBQ) \simeq \bigoplus_{X\in L(\CA)_k} H^{3k}(M(\CA_X);\BBQ) \simeq \bigoplus_{X\in \mathcal{X}(\CA,G)_k}\Ind_{N_G(X)}^{G}\left( H^{3k}(M(\CA_X);\BBQ) \right).
      \end{equation*}
    \item\label{Gdecomp:2} For \(X\in L(\CA)\), multiplication by \(\epsilon_G\) gives an isomorphism \[\epsilon_G H^{3k}(M(\CA_X);\BBQ)^{N_G(X)}\cong \Big(\bigoplus_{Y\in G.X} H^{3k}(M(\CA_Y);\BBQ)\!\Big)^{\!G},\] where \(G.X\) denotes the \(G\)-orbit of \(X\) in \(L(\CA)\).
      Summing over \(X\in \mathcal X(\CA, G)_k\), the first isomorphism in \labelcref{Gdecomp:1} gives the equality \[H^{3k}(M(\CA);\BBQ)^G = \sum_{X\in \mathcal X(\CA, G)_k}\epsilon_G\cdot H^{3k}(M(\CA_X);\BBQ)^{N_G(X)}.\]
    \item\label{Gdecomp:3} We have an isomorphism \[H^{3k}(M(\CA);\BBQ)^G \cong \bigoplus_{X\in\mathcal{X}(\CA,G)_k}H^{3k}(M(\CA(Z_G(X)));\BBQ)^{N_G(X)}.\]
  \end{enumerate}
\end{prop}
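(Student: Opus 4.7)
The plan is to derive the three parts in sequence, starting from \Cref{cBrieskorn} applied to $\CA$ viewed as a real $4$-arrangement, and then distilling the $G$-invariants via standard orbit--average arguments.

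For \Cref{Gdecomp} \labelcref{Gdecomp:1}, I would apply \Cref{cBrieskorn} \labelcref{cBrieskorn:1} with $c = 4$: in degree $3k$, the divisibility constraint $(c-1)\mid 3k$ and the rank equation $\rk(X) = \frac{3k}{c-1} = k$ force the indexing set to be $L(\CA)_k$, which yields the first isomorphism as $\BBQ$-vector spaces (rational coefficients are harmless by the universal coefficient theorem, as noted at the start of the section). The $G$-equivariance reduces to the naturality of the Goresky--MacPherson decomposition under linear automorphisms of $V$ preserving $\CA$: the inclusions $M(\CA)\hookrightarrow M(\CA_X)$ are $G$-compatible up to the permutation $X\mapsto g.X$ of the index, so summing over $X\in L(\CA)_k$ makes the total map $G$-equivariant. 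The second isomorphism then follows by decomposing $L(\CA)_k$ into $G$-orbits with representatives $\mathcal{X}(\CA,G)_k$: for each orbit representative $X$, the setwise stabilizer in $G$ is $N_G(X)$, which acts on $\CA_X$ and hence on $M(\CA_X)$, and the summand $\bigoplus_{Y\in G.X}H^{3k}(M(\CA_Y);\BBQ)$ is identified with $\Ind_{N_G(X)}^G H^{3k}(M(\CA_X);\BBQ)$ by the usual orbit--stabilizer description of an induced module.

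For \Cref{Gdecomp} \labelcref{Gdecomp:2}, I would invoke the classical identification $(\Ind_H^G W)^G \cong W^H$ for an $H$-module $W$, realised concretely by multiplication with the averaging idempotent $\epsilon_G$ on the canonical copy of $W$ inside $\Ind_H^G W$. Applied to $H = N_G(X)$ and $W = H^{3k}(M(\CA_X);\BBQ)$, this produces the first displayed isomorphism. Taking $G$-invariants in the first isomorphism of \labelcref{Gdecomp:1} and summing the resulting orbit contributions then yields the displayed equality for $H^{3k}(M(\CA);\BBQ)^G$. For \Cref{Gdecomp} \labelcref{Gdecomp:3}, the geometric key is the identification $\CA_X = \CA(Z_G(X))$: by \Cref{thm:paratoint} and \Cref{cor:paratoint}, the pointwise stabilizer $Z_G(X)$ is a parabolic subgroup (and hence a quaternionic reflection group by \cite{BST23}), whose reflecting hyperplanes are exactly those $H\in\CA$ containing $X$. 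Consequently $M(\CA_X) = M(\CA(Z_G(X)))$, and combining this with \labelcref{Gdecomp:2} gives the desired decomposition once one observes that the subspaces $\epsilon_G\cdot H^{3k}(M(\CA_X);\BBQ)^{N_G(X)}$ attached to distinct orbit representatives $X$ sit inside distinct direct summands of the decomposition in \labelcref{Gdecomp:1} and are therefore linearly independent, upgrading the sum to a direct sum.

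The main obstacle I anticipate is the careful verification of $G$-equivariance in \labelcref{Gdecomp:1}: \Cref{cBrieskorn} is stated as an isomorphism of cohomology groups, and one must trace through that the underlying Goresky--MacPherson decomposition is natural with respect to automorphisms of $\CA$, ensuring the inclusion-induced maps genuinely intertwine the $G$-actions and that the identification of orbit sums with induced modules respects the permutation action on $\mathcal{X}(\CA,G)_k$. Once this naturality is secured, the remaining arguments are formal consequences of orbit decomposition, the induced-module/invariants adjunction via $\epsilon_G$, and the parabolic identification $\CA_X = \CA(Z_G(X))$.
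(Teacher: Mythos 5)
Your proposal is correct and follows essentially the same route as the paper: the quaternionic Brieskorn analogue (\Cref{cBrieskorn} with $c=4$) for the first isomorphism, the standard orbit--stabilizer identification with induced modules and the idempotent $\epsilon_G$ realising $(\Ind_{N_G(X)}^G W)^G\cong W^{N_G(X)}$ for parts \labelcref{Gdecomp:1} and \labelcref{Gdecomp:2}, and the identification $\CA_X=\CA(Z_G(X))$ (with $Z_G(X)$ a reflection group by \cite{BST23}) for \labelcref{Gdecomp:3}. The only difference is that you spell out the equivariance and induced-module bookkeeping directly, where the paper defers to \cite{douglaspfeifferroehrle:invariants}, \cite{LS86}, \cite{OS83} and \Cref{cor:paratoint}.
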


\begin{proof}
  The proof is analogous to the proof of \cite[Prop.~2.5]{douglaspfeifferroehrle:invariants}. The first isomorphism in \labelcref{Gdecomp:1} is the content of \Cref{cBrieskorn}. The second one follows as in \emph{loc.~cit.}\ from work of Lehrer and Solomon \cite{LS86} and Orlik and Solomon \cite{OS83}. In the latter, the results are stated in terms of the poset of parabolic subgroups of a reflection group, thus we conclude by \Cref{cor:paratoint}.

  The statement in \labelcref{Gdecomp:2} is a claim about \(\BBQ G\)-modules and follows as in \cite{douglaspfeifferroehrle:invariants}.

  In \labelcref{Gdecomp:3}, the group \(Z_G(X)\) is a quaternionic reflection group by \cite{BST23} and we have \(\CA_X = \CA(Z_G(X))\) for \(X\in L(\CA)\).
  Hence the isomorphism in \labelcref{Gdecomp:3} follows from \labelcref{Gdecomp:1} and \labelcref{Gdecomp:2}.
\end{proof}

\begin{remark}
  \label{rem:transfer}
  Topologically, we may interpret the invariants \(H^*(M(\CA);\BBQ)^G\) as follows.
  Let \(M(\CA)/G\) be the quotient space of \(M(\CA)\) by \(G\).
  By \Cref{thm:paratoint} \labelcref{thm:paratoint:1}, the action of \(G\) on \(M(\CA)\) is free.
  As \(G\) is finite and \(M(\CA)\) a Hausdorff space, we conclude that the projection \(\rho: M(\CA) \to M(\CA)/G\) is a normal covering space by \cite[Prop.~1.40]{Hat02}.
  Then there is an isomorphism \[H^*(M(\CA);\BBQ)^G \cong H^*(M(\CA)/G; \BBQ)\] by the `transfer homomorphism' from algebraic topology \cite[Prop.~3G.1]{Hat02}.
\end{remark}

\begin{remark}
  We record a further consequence of \(\rho:M(\CA) \to M(\CA)/G\) being a covering space, see Remark~\ref{rem:transfer}.
  By \cite[Prop.~1.40]{Hat02}, we have an isomorphism \[G \cong \pi_1(M(\CA)/G)/\rho_*(\pi_1(M(\CA))).\]
  Because a quaternionic hyperplane has real codimension 4, the space \(M(\CA)\) is simply connected.
  We conclude \(\pi_1(M(\CA)) = \{0\}\) and \(\pi_1(M(\CA)/G) \cong G\).
  Notice that in the setting of real or complex reflection groups, the spaces \(M(\CA)\) and \(M(\CA)/G\)  are $K(\pi,1)$ \cite{bessis:finite},  and their fundamental groups \(\pi_1(M(\CA))\) and \(\pi_1(M(\CA)/G)\) are called the \emph{pure braid group} \(PB_G\) and the \emph{braid group} \(B_G\), respectively: see \cite{BMR98}.  In particular,  the group cohomology of $B_G$ is isomorphic to the cohomology of \(M(\CA)/G\), and hence \[H^*(B_G;\mathbb{Q})\cong H^*(M(\CA);\mathbb{Q})^G.\]
  However, in the quaternionic case this terminology does not seem to apply and \(\pi_1(M(\CA)/G)\) does not contain any new structure.
  Finally,  one should note that there are infinitely many \(k \geq 2\) for which the homotopy group \(\pi_k(M(\CA))\) is non-trivial, see \cite[Cor.~3.2]{BW95}, and we have \(\pi_k(M(\CA)) \cong \pi_k(M(\CA)/G)\) for \(k \geq 2\) by \cite[Prop.~4.1]{Hat02}. 
\end{remark}

Due to the Orlik--Solomon presentation of $H^*(M(\CA);\BBQ)$ in \Cref{cohomologyquaternionicarrangement}, the main statements in Section~2 of \cite{douglaspfeifferroehrle:invariants} hold as well in our setting.  For convenience of the reader, we restate the relevant results. Their geometric counterparts in the quaternionic setting are analyzed in more detail in \cite{Sch15}.  From \cite[Def.~3.12]{orlikterao:arrangements}, we have a homogeneous derivation $\partial: H^*(M(\CA);\BBQ)\rightarrow H^*(M(\CA);\BBQ)$ of degree $-3$ mapping $e_H\in H^{3}(M(\CA);\BBQ)$ to $1$ defined by $$\partial(e_{H_1}\cdots e_{H_k}) = \sum_{i=1}^{k}(-1)^{i - 1}e_{H_1}\cdots \widehat{e_{H_i}}\cdots e_{H_k}.$$ By \cite[Lem.~3.13]{orlikterao:arrangements}, $(H^*(M(\CA);\BBQ),\partial)$ is an acyclic complex, and by the remarks at the beginning of the section, $\partial$ is $G$-equivariant. Following \cite[\S 2.7]{douglaspfeifferroehrle:invariants}, define the map $\mu: H^*(M(\CA);\BBQ)\rightarrow H^*(M(\CA);\BBQ)$ by $$\mu(x) = \left(\frac{1}{|\CA|}\sum_{H\in\CA}e_H\!\right)\!x .$$ One sees immediately that $\mu$ is homogeneous of degree $3$, $G$-equivariant and satisfies the relation $\mu\partial + \partial\mu = \id$. For $k=0,\ldots,\rk(\CA)$ define 
$$
\overline{H^{3k}} := \partial\!\left(H^{3(k+1)}(M(\CA);\BBQ)\right).
$$ 
With the convention $H^{-3}(M(\CA);\BBQ) = \{0\}$, there is a canonical direct sum decomposition 
$$ 
H^{3k}(M(\CA);\BBQ) \simeq \mu(\overline{H^{3(k-1)}})\oplus \overline{H^{3k}}.
$$ 
As a consequence, we derive the following crucial Euler characteristic-like identity: 
$$
\sum_{k=0}^{\rk(\CA)} (-1)^{k}\dim H^{3k}(M(\CA);\BBQ)^G = 0.
$$

Together with \Cref{Gdecomp}, we finally have 
\begin{align}
  \label{eq:euler}
  \sum_{X\in\mathcal{X}\setminus \{\Cent(\CA)\}}\!\!\big((-1)^{\rk(X)}\dim H^{3\rk(X)}(M(\CA_X);\BBQ)^G \big)+ (-1)^n\dim H^{3n}(M(\CA);\BBQ)^G = 0,
\end{align} 
where \(\rk(X)\) for \(X\in L(\CA)\) denotes the quaternionic codimension of \(X\), as above, and \(n = \rk(\CA)\).
This is our inductive tool to compute the Poincaré polynomial 
$P(\SA,G; t)$
of $H^*(M(\CA);\BBQ)^G$: indeed, if one has computed $H^{3\rk(X)}(M(\CA_X);\BBQ)^G$ for $X\neq\Cent(\CA)$, then one can immediately recover the case $X = \Cent(\CA)$ with the above formula.
Finally, we have the following result for the Poincaré polynomial of the invariants for $\rk(\CA) = 2$ which follows by the same arguments as in \cite[Prop.~2.9]{douglaspfeifferroehrle:invariants} using \eqref{eq:euler}:

\begin{prop}
  \label{prop:dim2}
  Let \(G\leq\GL(V)\) be a quaternionic reflection group with \(\dim V = 2\).
  \begin{enumerate}
    \item If $G$ acts on $\CA$ with $a$ orbits, then $$ P(\CA(G),G;t) = 1 + at^3 + (a-1)t^6.$$
    \item If $\{H_1,\ldots,H_a\}$ is a set of orbit representatives for the action of $G$ on $\CA$, then the following is a graded basis for $H^{*}(M(\CA);\BBQ)^{G}$: $$ \{1\} \cup \{\epsilon_G\cdot e_{H_1},\ldots ,\epsilon_G\cdot e_{H_a}\} \cup \{\epsilon_G\cdot e_{H_1}e_{H_2},\ldots, \epsilon_G\cdot e_{H_1}e_{H_a}\},$$
      where $\epsilon_G = \frac{1}{|G|}\sum_{g\in G} g$.
  \end{enumerate}
\end{prop}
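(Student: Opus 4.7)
The plan is to mimic the strategy used for the complex case in \cite[Prop.~2.9]{douglaspfeifferroehrle:invariants}, the only modification being the degree shift from $1$ to $3$ intrinsic to quaternionic arrangements. By \Cref{cBrieskorn}, since $\rk(\CA)=2$ the cohomology $H^{*}(M(\CA);\BBQ)$ is concentrated in degrees $0$, $3$ and $6$. For part~(1), I will exhibit the dimension of the invariants in each of these degrees: in degree $0$ it is $1$ trivially; in degree $3$, the basis $\{e_{H}\}_{H\in\CA}$ from \Cref{cohomologyquaternionicarrangement} carries the permutation representation of $G$ on $\CA$, so the invariants have dimension equal to the number of orbits, namely $a$; and in degree $6$, applying the Euler characteristic identity $\sum_{k=0}^{2}(-1)^{k}\dim H^{3k}(M(\CA))^{G}=0$ derived above yields $\dim H^{6}(M(\CA))^{G}=a-1$ immediately.

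For part~(2), the degree $0$ case is obvious, and in degree $3$ the elements $\epsilon_{G}\cdot e_{H_{i}}=\frac{1}{|G.H_{i}|}\sum_{H\in G.H_{i}}e_{H}$ for $i=1,\dots,a$ are supported on disjoint orbits of $G$ on $\CA$, hence linearly independent; by the dimension count from part~(1) they form a basis of $H^{3}(M(\CA))^{G}$. The interesting case is degree $6$. Here the main tool is the $G$-equivariant derivation $\partial$: since the complex $(H^{*}(M(\CA)),\partial)$ is acyclic and $H^{9}(M(\CA))=0$ (because $\rk(\CA)=2$), the map $\partial$ is injective on $H^{6}$ and therefore also on $H^{6}(M(\CA))^{G}$. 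A direct computation gives $\partial(\epsilon_{G}\cdot e_{H_{j}})=1$ for every $j$, so $\ker(\partial)\cap H^{3}(M(\CA))^{G}$ is exactly the $(a-1)$-dimensional subspace of $G$-invariant combinations $\sum_{i}c_{i}\,\epsilon_{G}e_{H_{i}}$ satisfying $\sum_{i}c_{i}=0$. Computing $\partial(\epsilon_{G}\cdot e_{H_{1}}e_{H_{i}})=\epsilon_{G}(e_{H_{i}}-e_{H_{1}})=\epsilon_{G}e_{H_{i}}-\epsilon_{G}e_{H_{1}}$ for $i=2,\dots,a$ then produces $a-1$ manifestly linearly independent elements of this kernel, which must therefore be a basis of it; pulling back along the injection $\partial\colon H^{6}(M(\CA))^{G}\hookrightarrow H^{3}(M(\CA))^{G}$ shows that $\{\epsilon_{G}\cdot e_{H_{1}}e_{H_{i}}\}_{i=2}^{a}$ is a basis of $H^{6}(M(\CA))^{G}$, as claimed.

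I do not expect any serious obstacles: once the Orlik--Solomon-type presentation from \Cref{cohomologyquaternionicarrangement} is in hand, the $G$-equivariant derivation $\partial$, the contracting homotopy $\mu$, and the Euler characteristic identity all transfer verbatim from the complex setting of \cite{douglaspfeifferroehrle:invariants}. The only minor care is with the parity of the generators $e_{H}$, but since these sit in odd degree both in the complex case (degree $1$) and the quaternionic case (degree $3$), the sign conventions match up and no new subtleties appear.
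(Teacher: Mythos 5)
Your proposal is correct and follows essentially the same route as the paper, which simply invokes the argument of \cite[Prop.~2.9]{douglaspfeifferroehrle:invariants} together with the Euler characteristic identity \eqref{eq:euler}; your explicit use of the permutation basis of $H^3$, the identity \eqref{eq:euler} for the degree-$6$ dimension, and the $G$-equivariant derivation $\partial$ (injective in top degree by $\mu\partial+\partial\mu=\id$) is exactly the intended transfer of that argument with the degree shift from $1$ to $3$.
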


\begin{remark}
  Let \(G\leq\GL(V)\) be a reducible reflection group leaving the decomposition \(V = V_1\oplus V_2\) invariant and let \(G_i\leq \GL(V_i)\) with \(G = G_1\times G_2\).
  Then the Künneth formula induces an isomorphism \[H^*(M(\CA(G)))^G \cong H^*(M(\CA(G_1)))^{G_1} \otimes_\BBQ H^*(M(\CA(G_2)))^{G_2},\] see also \cite[\S 3]{douglaspfeifferroehrle:invariants} for more details.
  We may hence restrict to irreducible reflection groups in the following.
\end{remark}

\subsection{Invariants in the complex reducible case}

In this section only, let \(V\) be a complex vector space and \(G\leq \GL(V)\) be an irreducible complex reflection group with reflection arrangement \(\CA_\BBC\).
As explained in \Cref{sec:quatreflgroups}, we may consider \(G\) as a quaternionic reflection group acting on \(V\otimes_\BBC\BBH\) and this representation of \(G\) is quaternionic irreducible, but complex reducible.
We write \(\CA_\BBH\) for the quaternionic reflection arrangement of \(G\) as a quaternionic group.
Naturally, there is a \(G\)-equivariant bijection between \(\CA_\BBC\) and \(\CA_\BBH\).

The following is now a direct consequence of \Cref{cohomologyquaternionicarrangement}.
\begin{lemma}
  Let \(G\leq\GL(V)\) be a complex reflection group with reflection arrangement \(\CA_\BBC\) and corresponding quaternionic reflection arrangement \(\CA_\BBH\).
  Then there is a \(G\)-equivariant graded isomorphism of algebras \[H^*(M(\CA_\BBC);\BBQ) \cong H^\ast(M(\CA_\BBH);\BBQ)\] sending a generator \(e_{H_r}\in H^1(M(\CA_\BBC);\BBQ)\) to \(e_{H_r\otimes_\BBC\BBH}\in H^3(M(\CA_\BBH);\BBQ)\), where \(r\in G\) is a reflection.
\end{lemma}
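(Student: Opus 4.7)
The plan is to compare the two cohomology algebras through their Orlik--Solomon-type presentations and observe that the natural bijection between the complex and quaternionic arrangements is an isomorphism of matroids that respects the $G$-action, so that the same relations present both algebras.

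First, record the two presentations. By the classical Orlik--Solomon theorem \cite[Thm.~5.89]{orlikterao:arrangements}, $H^\ast(M(\CA_\BBC);\BBQ)$ is the quotient of the exterior algebra $\Lambda(\BBQ^{\CA_\BBC})$, with generators $e_H$ in degree $1$ ($H\in \CA_\BBC$), by the ideal generated by the boundary relations $\sum_{i=0}^{k}(-1)^i e_{H_{a_0}}\wedge\cdots\wedge\widehat{e_{H_{a_i}}}\wedge\cdots\wedge e_{H_{a_k}}$ for every minimal dependent set $\{H_{a_0},\dots,H_{a_k}\}\subseteq\CA_\BBC$. By \Cref{cohomologyquaternionicarrangement}, $H^\ast(M(\CA_\BBH);\BBQ)$ admits the same presentation, only with generators in degree $3$.

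Second, identify the bijection $\varphi:\CA_\BBC\to\CA_\BBH$, $H\mapsto H\otimes_\BBC\BBH$, which is already noted in the excerpt preceding the lemma. It is $G$-equivariant: the $G$-action on $V\otimes_\BBC\BBH$ is obtained from that on $V$ by extension of scalars, so $g.(H\otimes_\BBC\BBH)=(g.H)\otimes_\BBC\BBH$ for all $g\in G$.

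Third, show that $\varphi$ is an isomorphism of matroids. The key point is that for any subset $\{H_1,\ldots,H_k\}\subseteq\CA_\BBC$, the intersection behaves as $\bigcap_i(H_i\otimes_\BBC\BBH)=\bigl(\bigcap_i H_i\bigr)\otimes_\BBC\BBH$, and extension of scalars preserves codimension: $\codim_\BBH(X\otimes_\BBC\BBH)=\codim_\BBC(X)$ for any complex subspace $X\subseteq V$. Hence a subset of $\CA_\BBC$ is (minimally) dependent if and only if its $\varphi$-image is, and the boundary relations correspond under the map induced by $\varphi$.

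Finally, define $\Phi\colon H^\ast(M(\CA_\BBC);\BBQ)\to H^\ast(M(\CA_\BBH);\BBQ)$ on generators by $\Phi(e_{H_r})=e_{H_r\otimes_\BBC\BBH}$ and extend as a $\BBQ$-algebra map (tripling the grading). The previous step shows that $\Phi$ maps the defining ideal of the complex Orlik--Solomon algebra into the defining ideal on the quaternionic side, so $\Phi$ is a well-defined algebra homomorphism; it is bijective because $\varphi$ is a bijection and the two presentations are formally identical; it is $G$-equivariant because $\varphi$ is. No step is really an obstacle: the only substantive point is the matroid-preservation in the third step, and it reduces to the one-line identity on intersections displayed above.
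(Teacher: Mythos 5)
Your proposal is correct and follows the same route the paper intends: the lemma is stated there as a direct consequence of \Cref{cohomologyquaternionicarrangement}, i.e.\ both algebras have Orlik--Solomon presentations with identical relations, and the $G$-equivariant bijection $H\mapsto H\otimes_\BBC\BBH$ preserves the matroid because extension of scalars preserves intersections and codimensions. Your write-up simply fills in the details (matroid preservation, well-definedness, bijectivity, equivariance, and the degree tripling) that the paper leaves implicit, so there is nothing to correct.
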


By \cite{douglaspfeifferroehrle:invariants}, there are only four possible types of polynomials arising for $G$ an irreducible complex reflection group.
We repeat the result from \cite{douglaspfeifferroehrle:invariants} for completeness.
In the theorem, we use the labelling of irreducible complex reflection groups from \cite{ST54}.

\begin{theorem}
  \label{thm:poincarecompred}
  Let $G$ be an irreducible complex reflection group acting on $V$ of rank at least $2$.
  Let \(\CA_\BBC\) be the complex reflection arrangement of \(G\) and \(\CA_\BBH\) the corresponding quaternionic reflection arrangement.
  Then the Poincaré polynomial of $H^{\ast}(M(\CA_\BBH);\BBQ)^{G}$ is \[P(\CA_\BBH,G;t) = P(\CA_\BBC,G;t^3).\]
  More precisely, owing to \cite{douglaspfeifferroehrle:invariants}, we have the following cases:
  \begin{enumerate}
    \item for $G$ one of the following groups: $G(r,r,n)$ for $n$ or $r$ odd, $G_4,$ $G_8,$ $G_{12},$ $G_{16},$ $G_{20},$ $G_{22},$ $G_{25},$ $G_{32},$ or $E_6$, we have \[P(\CA_\BBH,G;t) = 1+t^3,\]
    \item for $G$ one of the groups $G(r,r,n)$ for $n$ and $r$ even, $H_{3},$ $G_{24},$ $G_{27},$ $G_{29},$ $H_4,$ $G_{31},$ $G_{33},$ $G_{34},$ $E_7$, or $E_8$, we have \[P(\CA_\BBH,G;t) = 1 + t^3 + t^{3n-3}+t^{3n},\]
    \item for $G$ one of the groups $G(r,p,n)$ with $p<r$ and $n$ or $p$ odd, $G_5,$ $G_6,$ $G_9,$ $G_{10},$ $G_{13},$ $G_{14},$ $G_{17},$ $G_{18},$ $G_{21},$ $G_{26}$, or $F_4$, we have \[P(\CA_\BBH, G;t) =1 + 2t^3 + \cdots + 2t^{3n-3} + t^{3n},\]
    \item for $G$ one of the groups $G(r,p,n)$ with $p<r$ and both $n$ and $p$ even, $G_7,$ $G_{11},$ $G_{15},$ or $G_{19}$, we have \[P(\CA_\BBH,G;t) = 1 + 2t^3 + \cdots + 2t^{3n-6} + 3t^{3n-3} + 2t^{3n}.\]
  \end{enumerate}
\end{theorem}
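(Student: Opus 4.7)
The plan is to reduce the statement directly to the computation of $P(\CA_\BBC,G;t)$ carried out in \cite{douglaspfeifferroehrle:invariants} by exploiting the degree-tripling isomorphism asserted in the preceding lemma.

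Concretely, I first invoke the $G$-equivariant isomorphism of graded algebras
\[
H^*(M(\CA_\BBC);\BBQ) \cong H^*(M(\CA_\BBH);\BBQ),
\]
which sends a degree~$1$ generator $e_{H_r}$ of the complex Orlik--Solomon algebra to the degree~$3$ generator $e_{H_r\otimes_\BBC\BBH}$ of the quaternionic one. Since both algebras are quotients of exterior algebras on generators indexed by $\CA_\BBC$ and $\CA_\BBH$ respectively, and the $G$-equivariant bijection $\CA_\BBC\to\CA_\BBH$ intertwines the two $G$-actions (each of which permutes the generators indexed by hyperplanes), multiplicativity identifies $H^k(M(\CA_\BBC);\BBQ)$ with $H^{3k}(M(\CA_\BBH);\BBQ)$ as $\BBQ G$-modules for every $k\geq 0$. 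Passing to $G$-invariants commutes with this $G$-equivariant isomorphism, so
\[
\dim_\BBQ H^k(M(\CA_\BBC);\BBQ)^G \;=\; \dim_\BBQ H^{3k}(M(\CA_\BBH);\BBQ)^G.
\]
To assemble these dimensions into the Poincar\'e polynomial, I would use \Cref{cBrieskorn}\labelcref{cBrieskorn:2} applied to $\CA_\BBH$ viewed as a real $4$-arrangement, which guarantees that $H^j(M(\CA_\BBH);\BBQ)=0$ whenever $j$ is not divisible by $3$. Summing yields
\[
P(\CA_\BBH,G;t) \;=\; \sum_{k\geq 0}\dim_\BBQ H^{3k}(M(\CA_\BBH);\BBQ)^G\, t^{3k} \;=\; P(\CA_\BBC,G;t^3),
\]
which is the main identity.

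For the four explicit cases, I would substitute $t\mapsto t^3$ into the corresponding Poincar\'e polynomials $P(\CA_\BBC,G;t)$ already classified in \cite{douglaspfeifferroehrle:invariants}; the partition of irreducible complex reflection groups into the four types is a property of $G$ itself and carries over verbatim. There is no real obstacle here: the whole argument is a direct consequence of the preceding lemma, and the only step that warrants a moment's care is to verify that the $G$-action is preserved by the algebra isomorphism, which is automatic from the compatibility of the generators $e_H$ under the identification $\CA_\BBC\leftrightarrow\CA_\BBH$.
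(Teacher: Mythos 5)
Your proposal is correct and follows essentially the same route as the paper: the degree-tripling $G$-equivariant isomorphism of the preceding lemma (together with the fact that the quaternionic cohomology is concentrated in degrees divisible by $3$) gives $P(\CA_\BBH,G;t)=P(\CA_\BBC,G;t^3)$, and the four explicit cases are then quoted directly from \cite{douglaspfeifferroehrle:invariants}.
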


\section{Imprimitive quaternionic reflection groups}

\subsection{The classification of imprimitive reflection groups}
\label{sec:imprimclass}

Let \(V\) be a finite-dimensional right vector space over \(\BBH\) of dimension \(n\geq 3\) and let \(G\leq \GL(V)\) be a reflection group.
Recall that \(G\) is called \emph{imprimitive} if there is a decomposition \(V = V_1\oplus \cdots\oplus V_k\), \(k \geq 2\), into non-trivial spaces \(V_i\) such that the action of every \(g\in G\) on \(V\) permutes the summands \(V_i\).
By \cite[Thm.~2.9]{cohen:quaternionic}, the irreducible, imprimitive quaternionic reflection groups with \(\dim V \geq 3\) are given by normal subgroups of certain wreath products.
More precisely, let \(K, H\leq \BBH^\times\) be finite groups with \([K,K]\leq H\trianglelefteq K\) and let \[A_n(K, H) = \left\{\left(\begin{smallmatrix}k_1&&\\&\ddots&\\&&k_n\end{smallmatrix}\right)\;\middle|\;k_1,\dots,k_n\in K,\ k_1\cdots k_n\in H\right\}\leq\GL_n(\BBH).\]
Then every irreducible, imprimitive quaternionic reflection group \(G\) acting on a space of dimension \(n\) is conjugate to a group \[G_n(K, H) := A_n(K, H) \rtimes S_n,\] where \(S_n\) acts on an element of \(A_n(K, H)\) by permuting the entries on the diagonal in the natural way.
Note that for $H = K$, we have \(A_n(K, K) \cong K^n\) and so 
\(G_n(K, K) = K\wr S_n\) is a wreath product and in general, \(G_n(K, H)\trianglelefteq G_n(K, K)\) is a normal subgroup.

We have the following list of finite subgroups of \(\BBH^\times\):
\begin{itemize}
  \item the cyclic groups \(\mathsf C_d\), \(d\geq 1\), of order \(d\);
  \item the binary dihedral groups \(\mathsf D_d\), \(d\geq 2\), of order \(4d\);
  \item the binary tetrahedral, binary octahedral and binary icosahedral groups \(\mathsf T\), \(\mathsf O\), \(\mathsf I\) of order 24, 48 and 120, respectively.
\end{itemize}
See \cite[Ex.~1.1]{cohen:quaternionic} for generators of these groups.
In \Cref{tab:kleinian}, we list the groups \(H\) that can occur for a given group \(K\) together with the derived subgroups and the isomorphism type of the abelianization of \(K\), see \cite[Ch.~20]{DuV64} for a classical reference of these results.

\begin{table}
  \caption{Finite subgroups \(K\leq\BBH^\times\) and possible groups \([K, K]\leq H\trianglelefteq K\)}
  \label{tab:kleinian}
  \begin{tabular}{ccll}
    \hline
    \(K\) & \([K, K]\) & \(K/[K, K]\) & \(H\)\\
    \hline
    \(\mathsf C_d\) & \(\mathsf C_1\) & \(C_d\) & \(\mathsf C_e\) for \(e\mid d\)\\
    \(\mathsf D_d\) & \(\mathsf C_d\) & \(C_2\times C_2\) (\(d\) even), & \(\mathsf C_d\), \(\mathsf C_{2d}\), \(\mathsf D_d\), \(\mathsf D_{d/2}\) (\(d\) even)\\
    & & \(C_4\) (\(d\) odd) & \(\mathsf C_d\), \(\mathsf C_{2d}\), \(\mathsf D_d\) (\(d\) odd)\\
    \(\mathsf T\) & \(\mathsf D_2\) & \(C_3\) & \(\mathsf D_2\), \(\mathsf T\)\\
    \(\mathsf O\) & \(\mathsf T\) & \(C_2\) & \(\mathsf T\), \(\mathsf O\)\\
    \(\mathsf I\) & \(\mathsf I\) & \(\{1\}\) & \(\mathsf I\)\\
    \hline
  \end{tabular}
\end{table}

For \(K\) cyclic, the group \(G_n(\mathsf C_d, \mathsf C_e)\) stems from a complex reflection group; we have the equality \(G_n(\mathsf C_d, \mathsf C_e)^\vee = G(d, d/e, n)^\circledast\) with the usual notation from \cite{ST54}, see \Cref{rem:complexify}.

\begin{remark}
  \label{rem:imprimarr}
  Assume \(H\neq \{1\}\).
  One checks that the hyperplanes in the reflection arrangement \(\CA(G_n(K, H))\) are given by \[\ker(x_i),\ 1\leq i\leq n,\text{ and } \ker(x_i - \zeta x_j),\ 1\leq i \neq j \leq n,\ \zeta\in K.\]
  In particular, \(\CA(G_n(K, H))\) only depends on \(K\) and \(n\), but not on \(H\).
  We introduce the notation \(\CA_n(K) := \CA(G_n(K, K))\) for this arrangement.
\end{remark}

\begin{remark}
  \label{rem:noH1}
  If \(H = \{1\}\), then the arrangement \(\CA(G_n(K, H))\) does not contain the coordinate hyperplanes \(\ker(x_i)\).
  Hence this case must be treated separately.
  However, \(H = \{1\}\) can only occur, if \(K\) is cyclic because \([K, K]\leq H\).
  Then \(G_n(K, H)\) can be identified with a complex reflection group and the results in the following sections are well-known and can be found in the cited references.
  For this reason, we usually restrict to the case \(H\neq \{1\}\).
\end{remark}

\subsection{The poset of parabolic subgroups}
Let \(G = G_n(K, H)\) be an imprimitive quaternionic reflection group acting on \(V = \BBH^n\) with \(H\trianglelefteq K\leq \BBH^\times\) finite groups.
In the following, we describe the parabolic subgroups of \(G\) in detail and prove that \(\mathcal P(G)\) is isomorphic to the \emph{Dowling lattice} \(\mathcal D_n(K)\) \cite{Dow73}.
The results in this section are not surprising for a reader familiar with the poset of parabolic subgroups of an imprimitive complex reflection group \(G(m, p, n)\) and our arguments are largely analogous to the complex case.
Still, we are not aware of a reference handling the quaternionic case in the literature.

Write \(I = \{1,\dots, n\}\).
Let \(\Lambda = \{e_1,\dots, e_n\}\) be a basis of \(V\) so that \(G\) has system of imprimitivity \((\langle e_1\rangle, \dots, \langle e_n\rangle)\).
The following is essentially \cite[Def.~3.3]{MT18}.
\begin{defn}
  Let \(I_0\subseteq \{1,\dots, n\}\), let \(\Pi = (I_1,\dots,I_d)\) be a partition of \(I\setminus I_0\) and let \(\xi:I\setminus I_0 \to K\) be a function.
  Let \(n_i := |{I_i}|\) and define the subgroup \(P_{(I_0, \Pi, \xi)}\) of \(G_n(K, H)\) by \[P_{(I_0,\Pi, \xi)} = P_0 \times P_1 \times \cdots \times P_d,\] where \(P_0\) is the quaternionic reflection group \(G_{n_0}(K, H)\) acting on the space spanned by \(\{e_i\mid i\in I_0\}\) and, for \(1\leq i\leq d\), \(P_i\) is the quaternionic reflection group \(S_{n_i}\) permuting the vectors \(\{\xi(j)e_j\mid j\in I_i\}\).
  The factor \(P_0\) is omitted if \(I_0 = \emptyset\).
  If \(K = \{1\}\), we require \(I_0 = \emptyset\).
\end{defn}

The group \(P_{(I_0, \Pi, \xi)}\) is the pointwise stabilizer of the vectors \(\sum_{i\in I_j}\xi(i)e_i\) for \(1\leq j\leq d\), so \(P_{(I_0, \Pi, \xi)}\) is a parabolic subgroup of \(G\).
We refer to the triple \((I_0, \Pi, \xi)\) as a \emph{parabolic triple}.

\begin{prop}
  \label{prop:paraclass}
  Let \(P\leq G_n(K, H)\) be a parabolic subgroup.
  Then there is a parabolic triple \((I_0, \Pi, \xi)\) with \(P = P_{(I_0, \Pi, \xi)}\).
\end{prop}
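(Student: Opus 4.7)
The plan is to use the correspondence between parabolic subgroups and elements of the intersection lattice (\Cref{thm:paratoint}), together with the explicit description of the reflecting hyperplanes from \Cref{rem:imprimarr}. Given a parabolic subgroup $P\leq G$, I first set $X := V^P$; the pointwise stabilizer operation is inclusion-reversing, and $P$ already fixes every point of $X$, so $P = \Fix_G(X)$. Moreover $X \in L(\CA)$ by \Cref{thm:paratoint}(1).

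The next step is to read off a parabolic triple from the equations cutting out $X$. By \Cref{rem:imprimarr}, every hyperplane in $\CA(G)$ is of the form $\ker(x_i)$ or $\ker(x_i - \zeta x_j)$ with $i \neq j$ and $\zeta\in K$, so $X$ is the solution set of a finite system of such equations. The coordinate equations $x_i = 0$ occurring in this system define a subset $I_0 \subseteq I := \{1,\dots,n\}$, and the equations of the second type induce an equivalence relation on $I\setminus I_0$ whose classes form a partition $\Pi = (I_1,\dots,I_d)$. Picking a representative $i_l^* \in I_l$ for each $l$, the relations $v_i = \xi(i)\, v_{i_l^*}$ (for $v \in X$ and $i\in I_l$) determine a unique function $\xi:I\setminus I_0\to K$ with $\xi(i_l^*) = 1$. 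When $K = \{1\}$ or $H = \{1\}$, the hyperplanes $\ker(x_i)$ are absent from $\CA$, so the construction automatically yields $I_0 = \emptyset$, consistent with \Cref{rem:noH1} and the definitional restriction for $K=\{1\}$.

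It then remains to verify that $\Fix_G(X) = P_{(I_0,\Pi,\xi)}$. One inclusion is direct: each factor of $P_{(I_0,\Pi,\xi)}$ preserves the defining relations of $X$ and hence fixes $X$ pointwise. For the converse I would argue that any $g = (\diag(k_1,\dots,k_n),\sigma) \in G$ fixing $X$ pointwise must satisfy $\sigma(I_0) = I_0$ (otherwise the constraint $v_i = 0$ fails for generic $v \in X$) and $\sigma(I_l) = I_l$ for every $l \geq 1$ (since the free parameters $v_{i_l^*}$ attached to distinct blocks vary independently on $X$). On each block the scaling entries $k_i$ are then forced by $\sigma|_{I_l}$ and $\xi$, exhibiting $g$ as lying in the $S_{n_l}$-factor of $P_{(I_0,\Pi,\xi)}$.

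The main technical obstacle, which I expect to be routine but requires care, is the compatibility of this forced element with the global constraint $k_1\cdots k_n \in H$ defining $G = G_n(K,H)$. On each block $I_l$ with $l\geq 1$, the diagonal entries prescribed by $\xi$ along a permutation cycle are ratios of the form $\xi(j)\xi(i)^{-1}$ whose total product maps to $1$ in the abelianization $K/[K,K]$, and therefore lies in $[K,K]\leq H$. Consequently the global constraint reduces to $\prod_{i\in I_0}k_i \in H$, which is precisely the defining condition of $P_0 = G_{n_0}(K,H)$. This yields $\Fix_G(X) = P_{(I_0,\Pi,\xi)} = P$, completing the proof.
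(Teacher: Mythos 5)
Your route is genuinely different from the paper's. The paper's proof quotes the proof of \cite[Prop.~3.4]{BST23} to get that \(P\) is \(G_n(K,K)\)-conjugate to a standard product \(G_{n_0}(K,H)\times S_{n_1}\times\cdots\times S_{n_d}\), and then converts the conjugating monomial matrix \(\diag(g_1,\dots,g_n)M(\sigma)\) into a parabolic triple. You instead work directly with the fixed space \(X=V^P\), use \(P=\Fix_G(X)\) and \(X\in L(\CA)\) (via \Cref{thm:paratoint} and closure of \(L(\CA)\) under intersections), read a \(K\)-labelled partial partition off \(X\), and verify \(\Fix_G(X)=P_{(I_0,\Pi,\xi)}\) by hand. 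What this buys is a self-contained argument that in effect re-proves the relevant stabilizer computation rather than outsourcing it; in particular your treatment of the determinant constraint is the right key point: on a block the forced entries are \(\xi(i)\xi(\sigma^{-1}(i))^{-1}\), their product is trivial in \(K/[K,K]\), hence lies in \([K,K]\leq H\), so the condition \(k_1\cdots k_n\in H\) collapses to \(\prod_{i\in I_0}k_i\in H\), i.e.\ to membership of the \(I_0\)-part in \(G_{n_0}(K,H)\). The cost is that you must redo by hand what the paper gets from \cite{BST23}.

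Two points need repair, however. First, the triple must be extracted from \(X\) intrinsically, not from ``the coordinate equations occurring in this system'': a chosen defining subsystem need not contain \(\ker(x_i)\) for every coordinate vanishing identically on \(X\). For instance \(X=\ker(x_1-x_2)\cap\ker(x_1-\zeta x_2)\) with \(1\neq\zeta\in K\) forces \(x_1=x_2=0\), yet no coordinate equation appears; with your recipe \(1,2\notin I_0\), the function \(\xi\) is then not uniquely determined (the relation \(v_i=\xi(i)v_{i_l^*}\) is vacuous when \(v_{i_l^*}\equiv 0\) on \(X\)), and the ``independent parameters'' step of your converse inclusion breaks for the resulting wrong triple. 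The fix is to set \(I_0=\{i : x_i|_X=0\}\) (equivalently, to use \emph{all} hyperplanes of \(\CA\) containing \(X\), which detects these coordinates when \(H\neq\{1\}\) since then every \(\ker(x_i)\in\CA\)) and to define the blocks by the proportionality relation on the remaining coordinates; the ratios lie in \(K\) because they are products of the labels \(\zeta\) along paths of defining equations. Second, your parenthetical claim that \(H=\{1\}\) automatically yields \(I_0=\emptyset\) is false: for \(K\neq\{1\}\), \(H=\{1\}\) the coordinate hyperplanes are indeed absent from \(\CA\), but intersections as above still force coordinates to vanish, and the pointwise stabilizer of \(\{x_1=x_2=0\}\) in \(G_n(K,\{1\})\) is \(G_2(K,\{1\})\) acting on those coordinates, i.e.\ a triple with \(I_0=\{1,2\}\). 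The statement of \Cref{prop:paraclass} does not exclude \(H=\{1\}\) (only \(K=\{1\}\) triggers the convention \(I_0=\emptyset\)), and the paper's proof covers it; your argument covers it too once the triple is defined intrinsically, so the appeal to \Cref{rem:noH1} and \Cref{rem:imprimarr} at that point should simply be dropped.
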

\begin{proof}
  The proof of \cite[Prop.~3.4]{BST23} shows that \(P\) is \((K\wr S_n)\)-conjugate to the group \(G_{n_0}(K, H) \times S_{n_1}\times \cdots \times S_{n_d}\) with \(n_0 + \cdots + n_d = n\).
  The elements of \(K\wr S_n = G_n(K, K)\) are given by products \(\diag(g_1,\dots, g_n)M(\sigma)\) where \(\diag(g_1,\dots,g_n)\) is the matrix with \(g_1,\dots, g_n\in K\) on the diagonal and \(M(\sigma)\) is the \(n\times n\) permutation matrix corresponding to \(\sigma\in S_n\).
  So there is such an element with \[P = \diag(g_1,\dots,g_n)M(\sigma)(G_{n_0}(K, H) \times S_{n_1}\times \cdots \times S_{n_d})M(\sigma^{-1}) \diag(g_1^{-1},\dots,g_n^{-1}).\]

  Write \(s_j := \sum_{k = 0}^jn_k\) for \(-1\leq j \leq d\) and set \(I_j = \{\sigma(i)\mid s_{j - 1} < i\leq s_j\}\) for \(0\leq j\leq d\).
  Clearly, \(\Pi := (I_1,\dots, I_d)\) is a partition of \(I\setminus I_0\).
  Let \[P_j := \diag(g_{s_{j - 1} + 1}, \dots, g_{s_j}) S_{n_j} \diag(g_{s_{j - 1} + 1}^{-1},\dots, g_{s_j}^{-1})\] for \(1\leq j \leq d\) and \[P_0 := \diag(g_1,\dots, g_{n_0})G_{n_0}(K, H) \diag(g_1^{-1},\dots, g_{n_0}^{-1}).\]
  Then \(P\) can be written as the direct product \(P_0\times P_1\times \cdots \times P_d\) with \(P_j\) acting on the vector space spanned by \(\{e_i\mid i\in I_j\}\).
  The group \(G_{n_0}(K, H)\) is normal in \(G_{n_0}(K, K) = K\wr S_{n_0}\), so \(P_0 = G_{n_0}(K, H)\).
  Define a map \(\xi: I\setminus I_0\to K\) via \(\xi(i) := g_{\sigma^{-1}(i)}\) for \(i \in I\setminus I_0\).
  This gives the desired equality \(P = P_{(I_0,\Pi, \xi)}\).
\end{proof}

As in \Cref{sec:reflarr}, write \(\mathcal P(G)\) for the poset of parabolic subgroups of \(G\) partially ordered by inclusion.
The ordering on the parabolic subgroups induces a partial order on the parabolic triples: \[(I_0, \Pi, \xi) \leq (I_0', \Pi', \xi') :\Longleftrightarrow P_{(I_0, \Pi, \xi)} \subseteq P_{(I_0', \Pi', \xi')}\;.\]

Notice that \(G_1(K, H) = H\).
In the following, we assume \(H\neq \{1\}\) so that trivial factors in the decomposition of a parabolic subgroup are always coming from a symmetric group, see also \Cref{rem:noH1}.

\begin{lemma}
  \label{lem:paraorder}
  Assume \(H\neq \{1\}\).
  Let \((I_0, \Pi, \xi)\) and \((I_0', \Pi', \xi')\) be parabolic triples.
  We have \((I_0, \Pi, \xi) \leq (I_0', \Pi', \xi')\) if and only if for every \(I_j'\in \Pi'\) there exist blocks \(I_{i_1}, \dots, I_{i_{p_j}}\in \Pi\) and \(g_1,\dots,g_{p_j}\in K\) such that \(I_j' = \bigcup_k I_{i_k}\) and for all \(l\in I_{i_k}\) we have \(\xi'(l) = g_{i_k}\xi(l)\).

  In particular, \(P_{(I_0, \Pi, \xi)} = P_{(I_0', \Pi', \xi')}\) if and only if \(I_0 = I_0'\), \(\Pi = \Pi'\) and there are \(g_1,\dots,g_{|{\Pi}|}\in K\) with \(\xi'(i) = g_j\xi(i)\) for every \(i\in I_j\).
\end{lemma}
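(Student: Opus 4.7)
The plan is to convert the inequality of parabolic triples into an inclusion of fixed subspaces via the order-reversing bijection $\mathcal P(G) \cong L(\CA(G))$ of \Cref{cor:paratoint}, and to read off both directions by coordinate comparison.

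Setting $v_j := \sum_{i \in I_j} \xi(i) e_i$ for $1 \leq j \leq d$, I would first establish that $\Fix(P_{(I_0, \Pi, \xi)}) = \operatorname{span}_{\BBH}(v_1, \ldots, v_d)$. For each $1 \leq j \leq d$, the factor $P_j = S_{n_j}$ permutes $\{\xi(i) e_i \mid i \in I_j\}$ and thus fixes precisely the $\BBH$-line through $v_j$ inside $\operatorname{span}_{\BBH}(e_i \mid i \in I_j)$. For the factor $P_0 = G_{n_0}(K, H)$ acting on $V_0 := \operatorname{span}_{\BBH}(e_i \mid i \in I_0)$, the assumption $H \neq \{1\}$ guarantees a diagonal reflection $\diag(1, \ldots, h, \ldots, 1)$ with $h \in H \setminus \{1\}$ at every coordinate, forcing $\Fix(P_0) \cap V_0 = \{0\}$.

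With this identification, the inequality $(I_0, \Pi, \xi) \leq (I_0', \Pi', \xi')$ becomes the condition that each $v_k'$ lies in $\operatorname{span}_{\BBH}(v_1, \ldots, v_d)$. Writing a linear combination $v_k' = \sum_j v_j g_{kj}$ with $g_{kj} \in \BBH$ and comparing the coefficients of each $e_i$ in the basis $\{e_1, \ldots, e_n\}$, using that $\xi, \xi'$ take values in $K \subseteq \BBH^\times$, yields the following: first, since the $v_j$'s vanish in every $e_i$-direction with $i \in I_0$, no $v_k'$ can involve such $e_i$, so $I_k' \cap I_0 = \emptyset$ for all $k$, i.e.\ $I_0 \subseteq I_0'$; second, for each pair $(k, j)$ either $g_{kj} = 0$ or $I_j \subseteq I_k'$, since otherwise we would simultaneously have $\xi'(i) = \xi(i) g_{kj}$ with $\xi'(i) \neq 0$ for some $i \in I_j \cap I_k'$ and $0 = \xi(i') g_{kj}$ for some $i' \in I_j \setminus I_k'$, which is impossible in the skew-field $\BBH$. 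This gives the decomposition $I_k' = \bigsqcup_s I_{i_s}$ into blocks of $\Pi$ and the proportionality relation with scalars $g_s$, and one checks that each $g_s$ lies in $K$ (not merely in $\BBH$) because it arises as a ratio of values of $\xi, \xi'$.

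Conversely, the combinatorial data produces $v_k' = \sum_s v_{i_s} g_s \in \operatorname{span}_{\BBH}(v_j)$, giving $\Fix(P') \subseteq \Fix(P)$ and hence the inequality. The equality statement follows by applying the criterion symmetrically with the roles of the two triples swapped: the two resulting partition refinements collapse to $\Pi = \Pi'$ (and thus $I_0 = I_0'$), and the scalar data reduces to a single element $g_j \in K$ per block. The main subtlety is consistent bookkeeping of the side of scalar multiplication under the right $\BBH$-module structure on $V$ (which decides whether one writes $\xi(l) g_s$ or $g_s \xi(l)$); the rest is routine coordinate comparison.
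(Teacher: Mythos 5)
Your proof is correct, and it takes a genuinely different route from the paper's. The paper argues at the level of group elements: from \(P_{(I_0,\Pi,\xi)}\subseteq P_{(I_0',\Pi',\xi')}\) it deduces that each block of \(\Pi'\) is a union of blocks of \(\Pi\) (using \(H\neq\{1\}\) to force the \(G_{n_0}(K,H)\)-factor into the \(G_{n_0'}(K,H)\)-factor), and then evaluates a transposition from the factor \(S_{I_{i_k}}\), which must also lie in \(S_{I_j'}\), on the scaled basis vectors to extract the scalar relating \(\xi\) and \(\xi'\) on each block; the converse is the same computation read backwards. You instead pass to fixed spaces: you compute \(\Fix(P_{(I_0,\Pi,\xi)})=\operatorname{span}_{\BBH}(v_1,\dots,v_d)\) with \(v_j=\sum_{i\in I_j}e_i\xi(i)\) (this is where \(H\neq\{1\}\) enters, killing the \(I_0\)-component), convert the containment of parabolics into \(\Fix(P')\subseteq\Fix(P)\) via \Cref{cor:paratoint} (equivalently, via the fact that \(P_{(I_0,\Pi,\xi)}\) is the full pointwise stabilizer of its defining vectors), and finish by coordinate comparison; your route buys a transparent linear-algebra argument at the price of invoking the parabolic/fixed-space correspondence, while the paper's is self-contained at the group level. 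One point you flag deserves emphasis: your bookkeeping yields \(\xi'(l)=\xi(l)g_{i_k}\) (right multiplication), whereas the statement writes \(\xi'(l)=g_{i_k}\xi(l)\). For non-abelian \(K\) these are different conditions, and the right-handed version is the one consistent with the conventions (matrices act on the left of the right \(\BBH\)-space): the factor attached to a block \(I_j\) is the conjugate \(dM(\sigma)d^{-1}\) of the permutation matrices by \(d=\diag(\xi(i))_{i\in I_j}\), and this conjugate is unchanged exactly when \(d\) is altered by a scalar matrix on the right, since scalar matrices commute with permutation matrices; concretely, for \(K=\mathsf D_2\), \(\xi=(\mathbf i,\mathbf j)\) and \(\xi'=(-1,-\mathbf k)\) give the same \(S_2\)-factor although no single \(g\in K\) satisfies \(\xi'(l)=g\,\xi(l)\) for both \(l\). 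So your caveat about the side of scalar multiplication is not cosmetic: it corrects a left/right slip in the printed formulation (invisible in the complex case, and immaterial for the later applications, which only use the relation modulo \(H\), where \(K/H\) is abelian), and with that reading your argument establishes the lemma, including the equality criterion obtained by applying the containment criterion in both directions.
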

\begin{proof}
  Assume \(P_{(I_0, \Pi, \xi)} \subseteq P_{(I_0', \Pi', \xi')}\).
  Then both groups maintain the block structure of the basis given by the respective partitions \((I_j)_j\) and \((I'_j)_j\).
  Further, if \(n_0 \neq 0\), the factor \(G_{n_0}(K, H)\) must be contained in \(G_{n_0'}(K, H)\) because \(H\neq \{1\}\).
  It follows that \(I_0 \subseteq I_0'\) and every \(I_j'\in \Pi'\) must be a union of blocks in \(\Pi\).
  Let \(I'_j\in\Pi'\) with \(I'_j = \bigcup_k I_{i_k}\) and let \(S_{I'_j}\) be the corresponding symmetric group permuting the vectors \(\{\xi'(l)e_l\mid l\in I'_j\}\).
  If we have \(|{I_{i_k}}| = 1\) for all \(k\), there is nothing to show.
  So, let \(|{I_{i_k}}| \geq 2\) and pick \(l_1,l_2\in I_{i_k}\).
  Then there is a permutation \(\sigma\in S_{I_{i_k}}\) that acts via \(\sigma(\xi(l_1)e_{l_1}) = \xi(l_2)e_{l_2}\).
  By assumption, \(\sigma\in S_{I'_j}\) as well, so \(\sigma(\xi'(l_1)e_{l_1}) = \xi'(l_2)e_{l_2}\).
  It follows that \(\xi^{-1}(l_1)\xi(l_2) = (\xi')^{-1}(l_1)\xi'(l_2)\), so \(\xi(l_1)(\xi')^{-1}(l_1) = \xi(l_2)(\xi')^{-1}(l_2)\).
  This must be fulfilled for arbitrary \(l_1,l_2\in I_{i_k}\), so \(g_{i_k}:= \xi(l_1)(\xi')^{-1}(l_1)\) is as desired.
  The same argument `read backwards' gives the claimed equivalence.
\end{proof}

Let \(\mathcal D_n(K)\) be the Dowling lattice of rank \(n\) corresponding to \(K\), see \cite{Dow73}.
The description of the partial ordering on parabolic triples in \Cref{lem:paraorder} gives an isomorphism between \(\mathcal P(G)\) and \(\mathcal D_n(K)\).
The analogue for imprimitive complex reflection groups is well-known, see \cite{Tay12}.
\begin{prop}
  \label{prop:dowling}
  Assume \(H\neq \{1\}\).
  There is an order preserving bijection between the poset \(\mathcal P(G_n(K, H))\) of parabolic subgroups of \(G_n(K, H)\) and the Dowling lattice \(\mathcal D_n(K)\).
\end{prop}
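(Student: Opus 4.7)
The plan is to exhibit the claimed bijection explicitly, using \Cref{prop:paraclass} for surjectivity and the second half of \Cref{lem:paraorder} to pass to a well-defined injection on equivalence classes; the order comparison then falls out of the first half of \Cref{lem:paraorder}. Recall that an element of the Dowling lattice \(\mathcal D_n(K)\) is represented by a triple \((I_0, \Pi, \xi)\) consisting of a "zero block'' \(I_0\subseteq I = \{1,\dots,n\}\), a partition \(\Pi = (I_1,\dots, I_d)\) of \(I\setminus I_0\), and a label function \(\xi:I\setminus I_0\to K\), modulo the equivalence that identifies \(\xi\) with \(\xi'\) whenever there exist \(g_1,\dots,g_d\in K\) such that \(\xi'(i) = g_j\xi(i)\) for all \(i\in I_j\). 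The standard order on \(\mathcal D_n(K)\) declares \((I_0,\Pi,\xi)\leq(I_0',\Pi',\xi')\) precisely when \(I_0\subseteq I_0'\), each block of \(\Pi'\) is a (labelled) union of blocks of \(\Pi\), and the labels agree in the sense of \Cref{lem:paraorder}.

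First I would define the map \(\Phi:\mathcal P(G_n(K, H))\to\mathcal D_n(K)\) by sending a parabolic subgroup \(P\) to the equivalence class of any parabolic triple \((I_0,\Pi,\xi)\) with \(P = P_{(I_0,\Pi,\xi)}\). By \Cref{prop:paraclass}, such a triple exists, so \(\Phi\) is defined on all of \(\mathcal P(G_n(K, H))\); the second statement of \Cref{lem:paraorder} shows that the dependence on the choice of representative triple is exactly the equivalence relation defining \(\mathcal D_n(K)\), so \(\Phi\) is well defined and injective. Surjectivity is immediate: every triple \((I_0,\Pi,\xi)\) defines a parabolic subgroup \(P_{(I_0,\Pi,\xi)}\leq G_n(K, H)\) by construction.

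To check that \(\Phi\) is order preserving in both directions, apply the first half of \Cref{lem:paraorder}: the inclusion \(P_{(I_0,\Pi,\xi)}\subseteq P_{(I_0',\Pi',\xi')}\) is equivalent to the condition that \(I_0\subseteq I_0'\), every block of \(\Pi'\) is a union of blocks of \(\Pi\), and the labels are adjusted by some \(g_{i_k}\in K\) on each constituent block. This is the defining condition of the order on \(\mathcal D_n(K)\), so \(\Phi\) is an isomorphism of posets.

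The only real subtlety is matching conventions: one must be careful that the hypothesis \(H\neq\{1\}\) is what forces the trivial factors in a parabolic decomposition to come only from symmetric-group factors, ensuring that \(I_0\) is intrinsic to the subgroup rather than an artefact of the chosen triple; this is the content of the parenthetical remark in the proof of \Cref{lem:paraorder}. I expect this bookkeeping about when \(I_0\) is well defined to be the main (very minor) obstacle; once it is in place, the proof is essentially a translation between two descriptions of the same combinatorial object.
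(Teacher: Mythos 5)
Your proposal is correct and follows essentially the same route as the paper: obtain a parabolic triple for each parabolic subgroup via \Cref{prop:paraclass}, use the equality criterion in \Cref{lem:paraorder} to see that the assignment to partial \(K\)-partitions is well defined and bijective, and use the inclusion criterion in \Cref{lem:paraorder} for order preservation. Your remark on the role of \(H\neq\{1\}\) in making \(I_0\) intrinsic matches the paper's treatment as well.
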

\begin{proof}
  Let \(P\leq G_n(K, H)\) be a parabolic subgroup.
  By Proposition~\ref{prop:paraclass}, there is a triple \((I_0, \Pi, \xi)\) with \(P = P_{(I_0, \Pi, \xi)}\).
  Then \(\Pi\) gives a partial partition of \(\{1,\dots,n\}\).
  Via the function \(\xi\), \(\Pi\) is turned into a partial \(K\)-partition, so we have established a map \(\mathcal P(G_n(K, H)) \to \mathcal D_n(K)\).
  Conversely, any partial \(K\)-partition gives a parabolic triple hence a parabolic subgroup.
  The latter assignment gives a well-defined map \(\mathcal D_n(K) \to \mathcal P(G_n(K, H))\) as two partial \(K\)-partitions are identified in \(\mathcal D_n(K)\) if and only if the corresponding parabolic subgroups are equal by Lemma~\ref{lem:paraorder}.
  The resulting bijection is order preserving by Lemma~\ref{lem:paraorder}.
\end{proof}

Our next result now follows from \Cref{prop:dowling} together with \Cref{cor:paratoint}.

\begin{corollary}
  Let \(K, H\leq \BBH^\times\) be finite groups with \([K, K]\leq H\trianglelefteq K\) and assume \(H\neq \{1\}\).
  There is an isomorphism of lattices \[L(\CA(G_n(K, H))) \cong \mathcal D_n(K).\]
\end{corollary}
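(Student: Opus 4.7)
The plan is a direct composition of two already-established identifications, with no further conceptual work required. First I would apply \Cref{cor:paratoint}, which for any quaternionic reflection group $G$ produces a $G$-equivariant lattice isomorphism $L(\CA(G)) \cong \mathcal P(G)$ sending a flat $X$ to its pointwise stabilizer in $G$. Specializing to $G = G_n(K, H)$ identifies $L(\CA(G_n(K, H)))$ with the poset of parabolic subgroups of $G_n(K, H)$.

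Next I would invoke \Cref{prop:dowling}, which under the hypothesis $H \neq \{1\}$ supplies an order-preserving bijection $\mathcal P(G_n(K, H)) \cong \mathcal D_n(K)$ via the parametrization of parabolics by parabolic triples $(I_0, \Pi, \xi)$. Composing the two identifications yields the desired isomorphism. Since \Cref{lem:paraorder} characterizes the partial order on parabolic subgroups as an ``if and only if'', both the bijection and its inverse preserve order; as both sides are already known to be lattices, any such order isomorphism automatically preserves joins and meets and is therefore a lattice isomorphism.

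There is essentially no obstacle here, since all the conceptual content sits in \Cref{prop:paraclass} and \Cref{lem:paraorder}, both proved above. The only point worth flagging in the write-up is the role of the assumption $H \neq \{1\}$, which is inherited from \Cref{prop:dowling} and is genuinely needed: as observed in \Cref{rem:noH1}, when $H = \{1\}$ the arrangement $\CA(G_n(K, H))$ loses the coordinate hyperplanes, the block $I_0$ of a parabolic triple can no longer be recovered from the subgroup alone, and the Dowling lattice $\mathcal D_n(K)$ ceases to be the correct combinatorial model — that case is instead subsumed by the complex reflection group discussion.
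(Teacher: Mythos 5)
Your proposal is correct and matches the paper's argument: the corollary is stated there as an immediate consequence of \Cref{prop:dowling} combined with \Cref{cor:paratoint}, exactly the composition you describe. The extra remarks on order isomorphisms of lattices and on the necessity of $H \neq \{1\}$ are fine but not needed beyond what the cited results already give.
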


\subsection{Orbit representatives}
\label{subsec:orbitreps}

Let \(G = G_n(K, H)\) be again  an imprimitive quaternionic reflection group for finite groups \(K, H\leq\BBH^\times\) and \(n\geq 3\).
We assume throughout that \(H \neq \{1\}\), see \Cref{rem:noH1}.
The group \(G\) acts naturally on the poset \(\mathcal P(G)\) by conjugation or, equivalently, on the lattice \(L(\CA(G))\) by linear transformations.
In the case of complex reflection groups (that is, \(K\) is cyclic), orbit representatives of this action are given in \cite[\S 3]{Tay12} for the parabolic subgroups and in \cite[\S 6.4]{orlikterao:arrangements} and \cite[\S 6.11]{douglaspfeifferroehrle:invariants} for the intersection lattice.
We extend these results to non-cyclic \(K\); the arguments are again largely analogous, however, there is one notable exception (\Cref{lem:noncyclicrep}).

We start by constructing a parabolic subgroup for any partial partition of \(n\).
Let \(m\in\{1,\dots, n\}\) and let \(\lambda = (\lambda_1,\dots,\lambda_k)\) be a partition of \(m\).
Put \(m_i := \sum_{j = 1}^i\lambda_j\) for \(1\leq i \leq k\) and \(m_0 := 0\).
Let \(I_0 := \{m + 1,\dots, n\}\) and \(I_i = \{m_{i - 1} + 1,\dots, m_i\}\) for \(1\leq i\leq k\) giving a partition \(\Pi = (I_1,\dots, I_d)\) of \(I\setminus I_0\).
Define the parabolic subgroup \(P_\lambda := P_{(I_0,\Pi,1)}\) where \(1\) denotes the map \(I\setminus I_0 \to K\) mapping everything to \(1 \in K\).

For \(\alpha\in K\), let \(\xi_\alpha:I\setminus I_0\to K\) be the map defined by \(\xi_\alpha(1) = \alpha\) and \(\xi_\alpha(i) = 1\) for \(i\geq 2\).
Set \(P_\lambda^\alpha := P_{(I_0, \Pi, \xi_\alpha)}\).
For a map \(\theta:\{1,\dots, n\}\to K\), write \(\hat{\theta}\in G_n(K, K)\) for the matrix with entries \(\theta(1),\dots,\theta(n)\) on the diagonal.

\begin{lemma}
  Let \(P\subseteq G_n(K, H)\) be a parabolic subgroup.
  Then there are an integer \(m\leq n\), a partition \(\lambda\) of \(m\) and \(\alpha\in K\) such that \(P\) is conjugate in \(G_n(K, H)\) to \(P_\lambda^\alpha\).
\end{lemma}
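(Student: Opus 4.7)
The plan is to apply \Cref{prop:paraclass} to write $P = P_{(I_0, \Pi, \xi)}$ for some parabolic triple, and then perform two successive conjugations within $G_n(K, H)$: first a permutation, which brings $(I_0, \Pi)$ into the standard shape dictated by a partition $\lambda$, and afterwards a diagonal matrix, which normalises $\xi$ to some $\xi_\alpha$.

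For the permutation step, I would conjugate by $M(\sigma)$ with $\sigma \in S_n \subseteq G_n(K, H)$ chosen so that the fixed block moves to $I_0 = \{m+1, \ldots, n\}$ (where $m = n - |I_0|$) and so that the remaining blocks of $\Pi$, sorted by non-increasing size, occupy the consecutive ranges $\{1,\ldots,\lambda_1\}, \{\lambda_1+1,\ldots,\lambda_1+\lambda_2\},\ldots$ matching a partition $\lambda = (\lambda_1,\ldots,\lambda_k)$ of $m$. After this conjugation, $P$ becomes some $P_{(I_0', \Pi', \xi')}$ whose underlying $(I_0', \Pi')$ is precisely the data defining $\lambda$, with $\xi' = \xi \circ \sigma^{-1}$.

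Next I would seek a diagonal $\hat\theta \in A_n(K, H)$ so that conjugation by $\hat\theta$ turns $\xi'$ into $\xi_\alpha$ for some $\alpha \in K$. The key structural observation is that $G_{n_0}(K, H)$ is normal in $G_{n_0}(K, K) = K \wr S_{n_0}$ (it is the kernel of the homomorphism $K \wr S_{n_0} \to K/H$ sending $\mathrm{diag}(k_1,\ldots,k_{n_0})\,\sigma \mapsto k_1\cdots k_{n_0} \cdot H$, which is well-defined and multiplicative because $[K,K]\leq H$). Hence the $I_0'$-block factor $P_0 = G_{n_0}(K, H)$ is preserved by conjugation by any diagonal matrix, and a direct computation on the remaining symmetric group factors shows that $\hat\theta\, P_{(I_0', \Pi', \xi')}\, \hat\theta^{-1} = P_{(I_0', \Pi', \theta \cdot \xi')}$, where $(\theta \cdot \xi')(i) = \theta_i \xi'(i)$. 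In view of \Cref{lem:paraorder} it then suffices to set $\theta_i := \xi'(i)^{-1}$ for $i \in (I \setminus I_0') \setminus \{1\}$ and $\theta_1 := \alpha \, \xi'(1)^{-1}$, leaving the $\theta_i$ for $i \in I_0'$ free to be determined.

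The main obstacle is the membership condition $\hat\theta \in A_n(K, H)$, i.e., $\theta_1 \cdots \theta_n \in H$, which is genuinely stronger than lying in $A_n(K, K)$. Because $[K, K] \leq H$ the quotient $K/H$ is abelian, so this becomes a single coset condition independent of the product order. If $I_0' \neq \emptyset$, the still-free entries $\theta_i$ for $i \in I_0'$ can absorb the constraint for any prescribed $\alpha$, so we may even take $\alpha = 1$. If $I_0' = \emptyset$, however, then $\theta_2, \ldots, \theta_n$ are all pinned and the constraint forces $\alpha$ into the specific coset $\xi'(1) \cdots \xi'(n) \cdot H \subseteq K$; any representative of that coset yields an admissible $\alpha \in K$. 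In either case the resulting $\hat\theta$ lies in $G_n(K, H)$ and, combined with the permutation from the first step, conjugates $P$ to $P_\lambda^\alpha$, completing the argument.
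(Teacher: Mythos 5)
Your proposal is correct and follows essentially the same route as the paper: invoke \Cref{prop:paraclass}, conjugate by a permutation to standardise $(I_0,\Pi)$ to the shape of a partition $\lambda$, then conjugate by a diagonal matrix $\hat\theta$ with $\theta_i=\xi'(i)^{-1}$ (and $\theta_1=\alpha\xi'(1)^{-1}$), choosing $\alpha$ (or the free entries on $I_0$) so that $\theta_1\cdots\theta_n\in H$. The only difference is cosmetic: the paper takes $\alpha=\xi(1)\cdots\xi(m)$ uniformly and sets $\theta\equiv 1$ on $I_0$, while you split into the cases $I_0=\emptyset$ or not; your explicit remark that the membership condition only needs to be checked in the abelian quotient $K/H$ is a welcome precision.
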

\begin{proof}
  By Proposition~\ref{prop:paraclass}, there is a parabolic triple \((I_0,\Pi,\xi)\) such that \(P = P_{(I_0, \Pi,\xi)}\).
  Put \(m := n - |{I_0}|\) and let \(\lambda\) be the partition of \(m\) coming from the (sorted) cardinalities of the blocks of \(\Pi\).
  After conjugating by a suitable permutation, we may assume that \(\Pi\) and \(I_0\) are given by \(\lambda\) as in the definition of the group \(P_\lambda\).
  Let \(\alpha = \xi(1)\cdots\xi(m)\in K\) and define \(\theta(1) = \alpha\xi(1)^{-1}\), \(\theta(i) = \xi(i)^{-1}\) for \(2\leq i\leq m\) and \(\theta(i) = 1\) for \(i\geq m + 1\).
  Because \(\theta(1)\cdots\theta(n) = \alpha\xi(1)^{-1}\cdots \xi(m)^{-1} = 1\in H\), we have \(\hat{\theta}\in G_n(K, H)\).
  So, \(P = \hat{\theta}^{-1}P_\lambda^\alpha\hat{\theta}\), as claimed.
\end{proof}

Hence, a system of orbit representatives of the action of \(G = G_n(K, H)\) on \(\mathcal P(G)\) can be chosen from the \(P_\lambda^\alpha\).
It remains to determine when two subgroups \(P_\lambda^\alpha\) and \(P_\mu^\beta\) are conjugate.
Clearly, this can only be the case, if \(\lambda = \mu\).

\begin{lemma}
  Let \(m < n\), \(\lambda = (\lambda_1,\dots,\lambda_k)\) a partition of \(m\) and \(\alpha\in K\).
  Then \(P_\lambda^\alpha\) is conjugate in \(G_n(K, H)\) to \(P_\lambda\).
\end{lemma}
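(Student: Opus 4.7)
The plan is to produce an explicit diagonal conjugating element of $G_n(K,H)$, and here the hypothesis $m<n$ is what makes this possible. Since $I_0 = \{m+1,\dots,n\}$ is non-empty, we may pick some $i_0 \in I_0$ and define a map $\theta: \{1,\dots,n\}\to K$ by $\theta(1) = \alpha^{-1}$, $\theta(i_0) = \alpha$, and $\theta(i) = 1$ otherwise. Since $\alpha^{-1}$ and $\alpha$ commute, the product $\theta(1)\cdots\theta(n)$ equals $1 \in H$, so the associated diagonal matrix $\hat\theta$ lies in $A_n(K,H) \subseteq G_n(K,H)$. I claim that $\hat\theta P_\lambda^\alpha \hat\theta^{-1} = P_\lambda$.

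To verify this, I would check the conjugation factor by factor using the decomposition $P_\lambda^\alpha = P_0 \times P_1 \times \cdots \times P_k$. For each block $I_j$ with $j\geq 2$, the indices in $I_j$ are disjoint from $\{1, i_0\}$, so $\hat\theta$ restricts to the identity on the span of $\{e_i : i \in I_j\}$, whence $\hat\theta P_j\hat\theta^{-1} = P_j$ coincides with the $j$-th factor of $P_\lambda$. For the factor $P_0 = G_{n_0}(K,H)$ on $I_0$, the restriction $\hat\theta|_{I_0}$ has entries in $K$ and hence lies in $G_{n_0}(K,K)$; normality of $G_{n_0}(K,H)$ in $G_{n_0}(K,K) = K\wr S_{n_0}$ then gives $\hat\theta|_{I_0}\,P_0\,\hat\theta|_{I_0}^{-1} = P_0$, which is also the zeroth factor of $P_\lambda$.

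The only non-trivial factor is $P_1$. Here I would compute directly that $\hat\theta$ takes the set of vectors $\{\alpha e_1, e_2, \dots, e_{\lambda_1}\}$ permuted by $P_1$ to the set $\{e_1, e_2, \dots, e_{\lambda_1}\}$: indeed $\hat\theta(\alpha e_1) = (\alpha^{-1}\alpha)e_1 = e_1$, and $\hat\theta(e_j) = e_j$ for $2\le j\le \lambda_1$ since these indices avoid $\{1, i_0\}$. Consequently $\hat\theta P_1\hat\theta^{-1}$ is precisely the symmetric group permuting $\{e_i : i\in I_1\}$, which is the first factor of $P_\lambda$. Combining the three observations yields $\hat\theta P_\lambda^\alpha\hat\theta^{-1} = P_\lambda$, as desired.

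I do not expect any real obstacle here. The essential use of $m<n$ is that it provides an index $i_0\in I_0$ onto which we can push the compensating scalar $\alpha$, so that the conjugating diagonal $\hat\theta$ actually lies in $G_n(K,H)$ rather than merely $G_n(K,K)$; the normality of $G_{n_0}(K,H)$ in $G_{n_0}(K,K)$ then absorbs the resulting perturbation of the $P_0$-factor.
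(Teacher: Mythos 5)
Your proof is correct and is essentially the paper's argument: both conjugate by a diagonal matrix with entries $\alpha^{\pm 1}$ placed at coordinate $1$ and at a coordinate of $I_0$ (which exists since $m<n$), so that the product of the diagonal entries is $1\in H$, and both rely on the invariance of $G_{n_0}(K,H)$ under conjugation inside $G_{n_0}(K,K)$ (the paper simply uses the inverse element, conjugating $P_\lambda$ to $P_\lambda^\alpha$ instead). Your write-up just spells out the factor-by-factor verification that the paper leaves implicit.
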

\begin{proof}
  By assumption, we have \[P_\lambda^\alpha = S_{\lambda_1}\times\cdots\times S_{\lambda_k}\times G_{n - m}(K, H)\] with \(n - m > 0\).
  Conjugating by the matrix \(\diag(1,\dots,1, \alpha^{-1})\) leaves the group \(G_{n - m}(K, H)\) invariant.
  So, for \(\theta(1) = \alpha\), \(\theta(n) = \alpha^{-1}\) and \(\theta(i) = 1\), for \(2\leq i \leq n - 1\), we have \(\hat{\theta}P_\lambda\hat{\theta}^{-1} = P_\lambda^\alpha\) with \(\hat{\theta}\in G_n(K, H)\).
\end{proof}

By the classification of imprimitive quaternionic reflection groups from above, the quotient \(K/H\) fails to be cyclic only for \(K = \mathsf D_{2d}\) and \(H = \mathsf C_{2d}\) with \(d\geq 2\), see \Cref{tab:kleinian}.
In this case we have \(K/H \cong C_2\times C_2\).
We consider the cyclic case separately from this instance.

\begin{prop}
  Let \(\lambda = (\lambda_1,\dots,\lambda_k)\) be a partition of \(n\) and let \(\alpha,\beta\in K\).
  Assume that \(K/H\) is cyclic.
  Write \(d := \gcd([K:H], \lambda_1,\dots,\lambda_k)\).
  The group \(P_\lambda^\alpha\) is conjugate to \(P_\lambda^\beta\) in \(G_n(K, H)\) if and only if \[\ord_{K/H}(\alpha\beta^{-1})\;\left|\;\frac{[K:H]}{d}\right..\]
\end{prop}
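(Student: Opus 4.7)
The plan is to reduce the question to a computation in the abelian quotient $K/H$ via the normalizer of $P_\lambda$ in the overgroup $G_n(K, K) = K\wr S_n$.

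For each $\alpha\in K$, set $g_\alpha := \diag(\alpha, 1,\dots, 1) \in G_n(K, K)$. A direct computation shows $P_\lambda^\alpha = g_\alpha P_\lambda g_\alpha^{-1}$, so setting $N := N_{G_n(K, K)}(P_\lambda)$, a standard orbit--stabilizer argument shows that $P_\lambda^\alpha$ is $G_n(K, H)$-conjugate to $P_\lambda^\beta$ if and only if $g_\beta^{-1} g_\alpha \in G_n(K, H)\cdot N$. Since $[K, K]\leq H$, the quotient $K/H$ is abelian, and the map
\[
\pi\colon G_n(K, K) \to K/H,\qquad \diag(k_1,\dots, k_n) M(\sigma) \mapsto k_1\cdots k_n\, H,
\]
is a well-defined surjective homomorphism with kernel $G_n(K, H)$ (abelianness is used to verify multiplicativity and that the product is invariant under the permutation $\sigma$). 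Since $\pi(g_\alpha) = \alpha H$, applying $\pi$ turns the conjugacy condition into $\alpha\beta^{-1} H \in \pi(N)$.

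The main step, and the main obstacle, is computing $\pi(N)$. I would show that $N$ consists of exactly those $\hat\theta M(\sigma)$ with $\sigma \in N_{S_n}(\Pi)$ and $\theta$ constant on each block $I_\ell$ of the set partition $\Pi = (I_1,\dots, I_k)$. Necessity comes from conjugating a basic transposition $(e_i, e_j)\in S_{\lambda_\ell}\subseteq P_\lambda$ by $\hat\theta M(\sigma)$: requiring the result to be a standard transposition in $P_\lambda$ forces $\theta_{\sigma(i)} = \theta_{\sigma(j)}$ for all $i, j$ in the same block, so $\theta$ is constant on each block; sufficiency is then a short direct check. The tricky part here is tracking the quaternionic scalars through the conjugation correctly to arrive at the constancy condition. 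Writing $\theta_i = \rho_\ell$ for $i\in I_\ell$, this yields $\pi(N) = \langle x^{\lambda_\ell} H : x\in K,\, 1\leq \ell \leq k\rangle \leq K/H$.

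Because $K/H$ is cyclic of order $m := [K:H]$, the subgroup $\pi(N)$ is generated in $C_m$ by $\lambda_1, \dots, \lambda_k$, hence equals the unique subgroup of index $d = \gcd(m, \lambda_1, \dots, \lambda_k)$, namely the elements of order dividing $m/d$. Therefore $\alpha\beta^{-1} H\in\pi(N)$ if and only if $\ord_{K/H}(\alpha\beta^{-1})\mid [K:H]/d$, as claimed.
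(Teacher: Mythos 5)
Your argument is correct, and while its arithmetic core coincides with the paper's (a conjugator can be normalized to a diagonal matrix that is constant on the blocks of \(\lambda\), and membership in \(G_n(K,H)\) then becomes the condition that \(\alpha\beta^{-1}\) times a product of \(\lambda_\ell\)-th powers lies in \(H\)), you organize it along a genuinely different route. The paper proves the two implications separately: for necessity it massages a given conjugator into a block-constant diagonal matrix and manipulates exponents of a generator \(\zeta\) of \(K/H\), and for sufficiency it explicitly constructs \(\theta\) from a B\'ezout identity for \(d\). You instead write \(P_\lambda^\alpha = g_\alpha P_\lambda g_\alpha^{-1}\), reduce conjugacy to the condition \(g_\beta^{-1}g_\alpha\in G_n(K,H)\cdot N\) with \(N = N_{G_n(K,K)}(P_\lambda)\), push everything through the quotient homomorphism \(\pi:G_n(K,K)\to K/H\) with kernel \(G_n(K,H)\), and compute \(\pi(N)\) once; both directions then fall out of the subgroup structure of the cyclic group \(K/H\), with no explicit construction needed for the converse. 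This is cleaner, and it has the further benefit that the computation of \(\pi(N)\) is independent of \(K/H\) being cyclic, so up to replacing the final cyclic-group step it also yields the case \(K = \mathsf D_{2d}\), \(H = \mathsf C_{2d}\) treated separately in \Cref{lem:noncyclicrep}; the paper's version, in exchange, is more elementary and fully explicit. Two small points you should spell out when writing this up: the transposition computation only shows that \(\sigma\) maps each block of size at least \(2\) \emph{into} a block and that \(\theta\) is constant on the image, so to get \(\sigma\in N_{S_n}(\Pi)\) (blocks mapped \emph{onto} blocks, whence \(\theta\) constant on every block) you need the standard supplementary argument using the inverse element; and the orbit--stabilizer reduction to \(g_\beta^{-1}g_\alpha\in G_n(K,H)\cdot N\) uses the normality of \(G_n(K,H)\) in \(G_n(K,K)\), which your map \(\pi\) in any case re-proves.
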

\begin{proof}
  Without loss of generality, we may assume that \(\beta = 1\).
  Let \(n_i := \sum_{j = 1}^i\lambda_i\) for \(1\leq i\leq k\) and \(n_0:= 0\).
  Assume there is a \(g\in G_n(K, H)\) with \(gP_\lambda g^{-1} = P_\lambda^\alpha\).
  By multiplying \(g\) by a suitable permutation, we may assume that \(g\) is a diagonal matrix.
  So there is a map \(\theta:\{1,\dots, n\}\to K\) with \(\hat{\theta} = g\).
  Further, \(\hat{\theta}\) must maintain the block structure given by \(\lambda\) in the sense that there is a map \(\theta':\{1,\dots, n\}\to K\) which is constant on the sets \(\{n_{i - 1} + 1,\dots, n_i\}\) for \(1\leq i\leq k\), with \(\theta(1) = \alpha\theta'(1)\) and \(\theta(i) = \theta'(i)\) for \(i \geq 2\).
  Because \(\hat{\theta}\in G_n(K, H)\), we have \(\alpha\theta'(n_1)^{\lambda_1}\cdots\theta'(n_k)^{\lambda_k}\in H\).
  Let \(\zeta\in K\) be a generator of \(K/H\).
  Then there are \(s, s_1,\dots,s_k\in \BBZ_{\geq 0}\) with \(\alpha \equiv \zeta^s\) and \(\theta'(n_i) \equiv \zeta^{s_i}\) in \(K/H\).
  So, we have \[\zeta^s\zeta^{s_1\lambda_1 + \cdots + s_k\lambda_k} = \zeta^{t[K:H]}\] for some \(t\in \BBZ_{\geq 0}\).
  We have \(d\mid \lambda_i\) and \(d\mid [K:H]\), so \(d \mid s\).
  By choosing an appropriate \(\zeta\), we may assume that \(s = \frac{[K:H]}{\ord_{K/H}(\alpha)}\), giving the claim.

  Conversely, assume that there is an \(s\in \BBZ\) with \(s\cdot d\cdot\ord_{K/H}(\alpha) = [K:H]\).
  Let \(\zeta\in K\) be a generator of \(K/H\) with \(\zeta^{[K:H]/\ord_{K/H}(\alpha)} \equiv \alpha\) in \(K/H\).
  There are \(t, s_1,\dots,s_k\in \BBZ\) with \(d = t[K:H] + s_1\lambda_1 + \cdots + s_k\lambda_k\).
  We obtain \[\alpha \equiv \zeta^{st[K:H] + ss_1\lambda_1 + \cdots + ss_k\lambda_k}.\]
  Define \(\theta':\{1,\dots, n\}\to K\) by \(\theta'(j) := \zeta^{-ss_i}\) for \(n_{i - 1} + 1 \leq j \leq n_i\) and \(1\leq i \leq k\).
  From this, we obtain a map \(\theta:\{1,\dots, n\}\to K\) by setting \(\theta(1) := \alpha\theta'(1)\) and \(\theta(i) := \theta'(i)\) for \(i\geq 2\).
  Then \(\hat{\theta}\in G_n(K, H)\) because \[\theta(1)\cdots \theta(n) = \alpha\theta'(1)\cdots\theta'(n) \equiv 1\] in \(K/H\).
  By construction, \(\hat{\theta}\) leaves the block structure given by \(\lambda\) invariant, so we have \(\hat{\theta}P_\lambda\hat{\theta}^{-1} = P_\lambda^\alpha\) as required.
\end{proof}

\begin{lemma}
  \label{lem:noncyclicrep}
  Let \(K = \mathsf D_{2d}\) and \(H = \mathsf C_{2d}\) with \(d\geq 2\).
  Let \(\lambda = (\lambda_1,\dots,\lambda_k)\) be a partition of \(n\) and let \(\alpha,\beta\in K\).
  The group \(P_\lambda^\alpha\) is conjugate to \(P_\lambda^\beta\) in \(G_n(\mathsf D_{2d}, \mathsf C_{2d})\) if and only if \(\alpha\beta^{-1}\in H\) or \(2\nmid \gcd(\lambda_1,\dots,\lambda_k)\).
\end{lemma}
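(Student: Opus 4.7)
The plan is to imitate the structure of the preceding cyclic-quotient proposition, with the crucial additional ingredient that here \(K/H \cong C_2\times C_2\) is no longer cyclic but is still abelian, so that we can still reduce the determinant condition modulo \(H\) in a clean way. As before, I will assume without loss of generality that \(\beta = 1\) and look for an element \(g\in G_n(\mathsf D_{2d}, \mathsf C_{2d})\) with \(gP_\lambda g^{-1} = P_\lambda^\alpha\). Multiplying by a permutation in \(S_n\leq G_n(K, H)\) that preserves the block structure given by \(\lambda\), I may replace \(g\) by a diagonal matrix \(\hat{\theta}\) for some function \(\theta: \{1,\dots,n\}\to K\), exactly as in the cyclic case.

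Next, I will use \Cref{lem:paraorder} to translate the equality \(\hat{\theta} P_\lambda \hat{\theta}^{-1} = P_\lambda^\alpha\) into a concrete shape for \(\theta\): writing \(I_1,\dots,I_k\) for the blocks of \(\lambda\), there must exist elements \(g_1,\dots,g_k\in K\) such that \(\theta(i) = g_j\) for all \(i\in I_j\), except that \(\theta(1) = g_1\alpha\). The condition \(\hat{\theta} \in G_n(K, H)\) then reads
\[
g_1\alpha\cdot g_1^{\lambda_1 - 1}\cdot g_2^{\lambda_2}\cdots g_k^{\lambda_k}\in H.
\]
Passing to the abelian quotient \(K/H\cong C_2\times C_2\) (in which every non-identity element has order \(2\)), this condition becomes \(\bar\alpha = \bar g_1^{\lambda_1}\bar g_2^{\lambda_2}\cdots \bar g_k^{\lambda_k}\), where the overbar denotes the image in \(K/H\), and each factor \(\bar g_j^{\lambda_j}\) equals \(\bar 1\) if \(\lambda_j\) is even and \(\bar g_j\) if \(\lambda_j\) is odd.

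From this observation the two cases of the lemma fall out directly. If \(2 \mid \gcd(\lambda_1,\dots,\lambda_k)\), every factor on the right is trivial in \(K/H\), so conjugacy forces \(\bar\alpha = \bar 1\), i.e.\ \(\alpha\in H\). Conversely, if some \(\lambda_j\) is odd, then choosing \(g_j\) to be any representative of \(\bar\alpha\) and all other \(g_i\) equal to \(1\) produces a \(\theta\) with \(\hat{\theta}\in G_n(K, H)\) realising the conjugation, and if \(\alpha\in H\) one can simply take all \(g_i = 1\) regardless of parity. In both directions the choices so constructed clearly give an honest element of \(G_n(\mathsf D_{2d}, \mathsf C_{2d})\), completing the argument.

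The main technical point is the non-commutativity of \(K = \mathsf D_{2d}\): the product \(\theta(1)\cdots\theta(n)\) is order-dependent in \(K\), and one might worry that the rearrangement above is illegal. This is the step where the special structure intervenes cleanly, since the natural map \(K\to K/H\) is a homomorphism and \(K/H\cong C_2\times C_2\) is abelian, so all reorderings are permitted modulo \(H\), which is precisely what the condition \(\hat\theta\in G_n(K,H)\) requires. This is the only place where the argument genuinely differs from the cyclic case; everything else is a direct transcription.
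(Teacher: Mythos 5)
Your proof is correct and follows essentially the same route as the paper's: reduce to \(\beta = 1\) and a diagonal conjugator, use the block-constant structure of \(\theta\) with an \(\alpha\)-twist at position \(1\), and pass to \(K/H\cong C_2\times C_2\), where all squares vanish, so that an even \(\gcd\) forces \(\bar\alpha = \bar 1\) while an odd part \(\lambda_j\) lets you place \(\alpha\) on that block to realise the conjugation. Your explicit appeal to \Cref{lem:paraorder} and your remark that non-commutativity of \(K=\mathsf D_{2d}\) is harmless because only the image in the abelian quotient \(K/H\) matters are just slightly more spelled-out versions of steps the paper leaves implicit.
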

\begin{proof}
  The proof is similar to the one in the cyclic case.
  We may again assume that \(\beta = 1\).
  If \(\alpha\equiv 1\) in \(K/H\), there is nothing to prove.
  So, we have \(\ord_{K/H}(\alpha) = 2\) because \(K/H \cong C_2\times C_2\).

  If there is a \(g\in G_n(K,H)\) with \(gP_\lambda g^{-1} = P_\lambda^\alpha\), we can construct a map \(\theta':\{1,\dots, n\}\to K\) which is constant on the sets \(\{n_{i - 1} + 1,\dots, n_i\}\) and so that \(\theta\) defined by \(\theta(1):= \alpha\theta'(1)\) and \(\theta(i) := \theta'(i)\) for \(i\geq 2\) gives \(\hat{\theta}P_\lambda\hat{\theta}^{-1} = P_\lambda^\alpha\) as before.
  Then we have \[\alpha\theta'(n_1)^{\lambda_1}\cdots\theta'(n_k)^{\lambda_k} \in H,\] so there must be a \(1\leq j\leq k\) with \(2\nmid \lambda_j\) and hence \(2\nmid \gcd(\lambda_1,\dots,\lambda_k)\).

  If, conversely, \(2\nmid \gcd(\lambda_1,\dots,\lambda_k)\), then there is an \(i\) with \(2\nmid \lambda_i\).
  So, we may set \(\theta'(j) := \alpha\) for \(n_{i - 1} + 1\leq j\leq n_i\) and \(\theta'(j) := 1\) otherwise, and again \(\theta(1) := \alpha\theta'(1)\) and \(\theta(i) := \theta'(i)\) for \(i\geq 2\).
  Then \(\theta(1)\cdots \theta(n) = \alpha^{\lambda_i + 1}\equiv 1\) in \(K/H\), so \(\hat{\theta}\in G_n(K, H)\), and \(\hat{\theta}P_\lambda\hat{\theta}^{-1} = P_\lambda^\alpha\).
\end{proof}

We summarize the results above on the \(G_n(K, H)\)-conjugacy classes of parabolic subgroups  \(\mathcal P(G_n(K, H))\):

\begin{theorem}
  \label{thm:orbitreps}
  Representatives of the \(G_n(K, H)\)-conjugacy classes of parabolic subgroups  \(\mathcal P(G_n(K, H))\) are given as follows:
  \begin{enumerate}[(a)]
    \item For \(K/H\) cyclic:
      \begin{enumerate}[(i)]
        \item \(P_\lambda\) with \(\lambda\vdash m < n\);
        \item \(P_\lambda^{\alpha^s}\) with \(\lambda = (\lambda_1,\dots,\lambda_k)\vdash n\), \(\alpha\in K\) is a generator of \(K/H\), and \(0\leq s < \gcd([K:H], \lambda_1,\dots,\lambda_k)\);
      \end{enumerate}
    \item For \(K = \mathsf D_{2d}\) and \(H = \mathsf C_{2d}\):
      \begin{enumerate}[(i)]
        \item \(P_\lambda\) with \(\lambda\vdash m < n\);
        \item \(P_\lambda\) with \(\lambda = (\lambda_1,\dots,\lambda_k)\vdash n\) and \(2\nmid \gcd(\lambda_1,\dots,\lambda_k)\);
        \item \(P_\lambda^1\), \(P_\lambda^{\alpha}\), \(P_\lambda^{\beta}\), \(P_\lambda^{\gamma}\) with \(\lambda = (\lambda_1,\dots,\lambda_k)\vdash n\), \(2\mid \gcd(\lambda_1,\dots,\lambda_k)\), and \(\{1, \alpha,\beta,\gamma\}\subseteq K\) is a system of representatives of the residue classes of \(H\) in \(K\).
      \end{enumerate}
  \end{enumerate}
\end{theorem}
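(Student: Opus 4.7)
The plan is to collate the preceding results of this subsection --- Proposition~\ref{prop:paraclass}, the two intermediate lemmas producing the canonical forms \(P_\lambda^\alpha\) and \(P_\lambda\), the unlabelled proposition giving the cyclic conjugacy criterion, and Lemma~\ref{lem:noncyclicrep} --- with the information in Table~\ref{tab:kleinian}, which determines exactly when the quotient \(K/H\) is cyclic. The theorem then follows as an organizational exercise.

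First I would invoke Proposition~\ref{prop:paraclass} together with the first of the intermediate lemmas to reduce every parabolic subgroup of \(G_n(K,H)\), up to \(G_n(K,H)\)-conjugacy, to one of the form \(P_\lambda^\alpha\) for a partition \(\lambda\vdash m\) with \(m\leq n\) and some \(\alpha\in K\). When \(m < n\), the second intermediate lemma absorbs the parameter \(\alpha\): all \(P_\lambda^\alpha\) are mutually conjugate in \(G_n(K,H)\) to \(P_\lambda\), which accounts for item (i) in both cases (a) and (b) of the statement.

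The remaining analysis is for \(m = n\) and splits according to whether \(K/H\) is cyclic. An inspection of Table~\ref{tab:kleinian} shows that the only admissible pair \((K, H)\) for which \(K/H\) fails to be cyclic is \((K, H) = (\mathsf{D}_{2d}, \mathsf{C}_{2d})\), with \(K/H \cong C_2 \times C_2\); this yields the dichotomy (a) versus (b). In the cyclic case I would apply the preceding proposition: fixing a generator \(\alpha\) of \(K/H\), any element of \(K\) is congruent to a unique power \(\alpha^s\) modulo \(H\) with \(0\leq s<[K:H]\), and the criterion \(\ord_{K/H}(\alpha^{s-t}) \mid [K:H]/d\), where \(d := \gcd([K:H], \lambda_1,\dots,\lambda_k)\), simplifies to \(d \mid (s-t)\). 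Hence the distinct classes are represented precisely by the \(P_\lambda^{\alpha^s}\) with \(0 \leq s < d\), giving (a)(ii). In the non-cyclic case, Lemma~\ref{lem:noncyclicrep} reads off (b) directly: if \(2\nmid\gcd(\lambda_1,\dots,\lambda_k)\) all \(P_\lambda^\alpha\) collapse to the single class \(P_\lambda\), which is (b)(ii); otherwise the equivalence \(P_\lambda^\alpha \sim P_\lambda^\beta \iff \alpha\beta^{-1}\in H\) forces the four cosets of \(H\) in \(K\) to give four distinct classes, which is (b)(iii).

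Since all the substantive work has already been done in the earlier results, I do not anticipate any serious obstacle. The only step needing a small amount of care is the counting in (a)(ii): one must verify that \(\{P_\lambda^{\alpha^s} : 0\leq s<d\}\) both exhausts the conjugacy classes and contains no repetitions, which amounts to the observation that the equivalence relation induced on \(K/H\) by the cyclic proposition is precisely congruence modulo the unique subgroup of \(K/H\) of index \(d\).
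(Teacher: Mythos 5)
Your proposal is correct and follows essentially the same route as the paper, which states \Cref{thm:orbitreps} precisely as a summary of \Cref{prop:paraclass}, the two reduction lemmas, the unlabelled proposition for cyclic \(K/H\), and \Cref{lem:noncyclicrep}, with \Cref{tab:kleinian} identifying \((\mathsf D_{2d},\mathsf C_{2d})\) as the only non-cyclic case. Your simplification of the criterion \(\ord_{K/H}(\alpha^{s-t})\mid [K:H]/d\) to \(d\mid(s-t)\), giving exactly the representatives \(P_\lambda^{\alpha^s}\) with \(0\leq s<d\), is the correct counting for (a)(ii).
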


The orbit representatives in \(\mathcal P(G_n(K, H))\) given above correspond to orbit representatives in the lattice \(L(\CA(G_n(K, H)))\) by taking fixed spaces.
Concretely, if \(\{e_1,\dots,e_n\}\) is the standard basis of \(V = \BBH^n\), the fixed space of \(P_\lambda^\alpha\) is given by \[X_\lambda^\alpha := \langle \alpha e_1 + e_2 + \cdots + e_{n_1}, e_{n_1 + 1} + \cdots + e_{n_2}, \dots, e_{n_{k - 1} + 1} + \cdots + e_{n_k}\rangle\] with \(\lambda = (\lambda_1,\dots,\lambda_k)\) and \(n_i := \sum_{j = 1}^i\lambda_j\).

\section{Invariants of the imprimitive groups}
\label{sec:imprimgroups}
Let \(G = G_n(K, H)\) be an imprimitive quaternionic reflection group for finite groups \(K, H\leq \BBH^\times\) and \(n\geq 3\), where we continue to assume that \(H\neq \{1\}\).
Let \(\CA = \CA(G) = \CA_n(K)\) be the corresponding quaternionic reflection arrangement.
Recall that \(L(\CA)\) is endowed with a rank function \(\rk\) and we have \(\rk(X) = \codim_\BBH(X)\) for all \(X\in L(\CA)\).
As before, we write \(L(\CA)_k\) to denote the subset of elements of rank \(k\).
Let \(\mathcal X = \mathcal X(\CA, G)\) be a fixed set of orbit representatives of the action of \(G\) on \(L(\CA)\) and let \(\mathcal X(\CA, G)_k = \mathcal X(\CA, G)\cap L(\CA)_k\) be the representatives of a fixed rank \(k\).

We construct the Poincaré polynomial of \(H^*(M(\CA))^G\) inductively using \Cref{Gdecomp} and the identity \eqref{eq:euler}.
For this, we have to determine the set \[\mathcal X(\CA, G)_k^{\mathrm{tdi}} = \{X\in \mathcal X(\CA, G)_k \mid \dim H^{3k}(M(\CA_X))^{N_G(X)} \neq 0\}\] of orbit representatives that admit ``top degree invariants'', similarly to what is done in \cite{douglaspfeifferroehrle:invariants}.
As discussed in the previous section, \(X\in L(\CA)\) corresponds to a parabolic subgroup of \(G\) and hence to a partial partition of \(n\).
Write \(X_\lambda^\alpha\) for the fixed space of a parabolic subgroup \(P_\lambda^\alpha\leq G\).

The proof of the following lemma is analogous to the proof of \cite[Lem.~6.16]{douglaspfeifferroehrle:invariants}.

\begin{lemma}
  \label{lem:tdiparts}
  Let \(X_\lambda^\alpha \in \mathcal X(\CA, G)\) be an orbit representative.
  If \(X_\lambda^\alpha \in \mathcal X(\CA, G)^{\mathrm{tdi}}\), then \(\lambda \in\{\emptyset, (21^{m - 1}), (1^m)\}\) with \(m \geq 1\).
\end{lemma}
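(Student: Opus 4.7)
The plan is to analyze $\dim H^{3(n-k)}(M(\CA_X))^{N_G(X)}$, where $k = \rk(X) = n - \dim_\BBH X$, by exploiting the product structure of $P := P_\lambda^\alpha = Z_G(X)$ and $\CA_X = \CA(P)$. Since $P = S_{\lambda_1}\times\cdots\times S_{\lambda_k}\times G_{n-m}(K,H)$ acts on mutually transverse coordinate subspaces of $V$, the arrangement $\CA(P)$ splits as a product of the factors' reflection arrangements and the complement $M(\CA(P))$ is a product of the individual complements. Applying the Künneth formula in top degree yields an isomorphism
\[H^{3(n-k)}(M(\CA(P))) \cong \bigotimes_{i=1}^{k}H^{3(\lambda_i-1)}(M(\CA(S_{\lambda_i}))) \otimes H^{3(n-m)}(M(\CA(G_{n-m}(K,H)))).\]

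Since $P = Z_G(X)\leq N_G(X)$, every $N_G(X)$-invariant is a fortiori $P$-invariant, so each tensor factor above must contribute a nonzero invariant subspace under the action of its corresponding factor of $P$. For the $S_{\lambda_i}$-factor with $\lambda_i \geq 3$, I would invoke case (1) of \Cref{thm:poincarecompred} (applied to $S_{\lambda_i} = G(1,1,\lambda_i)$, in which $r = 1$ is odd), whence $P(\CA(S_{\lambda_i}), S_{\lambda_i}; t) = 1 + t^3$, so the coefficient of $t^{3(\lambda_i-1)}$ vanishes. For $\lambda_i \in \{1, 2\}$ the top component is $1$-dimensional (either trivially for $\lambda_i = 1$, or by \Cref{prop:dim2} for $\lambda_i = 2$). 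This forces $\lambda_i \in \{1, 2\}$ for every $i$, so $\lambda = (2^a, 1^b)$ for some $a, b \geq 0$.

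To restrict $a$ further, I would exploit that $N_G(X)/Z_G(X)$ contains a copy of $S_a$ permuting the $a$ blocks of size $2$ of $\lambda$ (coming from the $S_n$-factor of $G$ permuting equal-sized blocks of the partition). Each such block contributes a one-dimensional top cohomology spanned by an Orlik--Solomon generator $e_{H_i}$ of degree $3$. Under the Künneth identification, the tensor $e_{H_1}\otimes\cdots\otimes e_{H_a}$ corresponds to the wedge $e_{H_1}\wedge\cdots\wedge e_{H_a}$ in the ambient algebra, on which $S_a$ acts by the sign representation, because the $e_{H_i}$ anticommute (equivalently, the Koszul signs in the Künneth isomorphism for odd-degree classes contribute $\mathrm{sgn}(\sigma)$ for each $\sigma \in S_a$). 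Hence the $S_a$-invariants of this one-dimensional space vanish unless $a \leq 1$, which combined with the shape $\lambda = (2^a, 1^b)$ gives the desired conclusion $\lambda \in \{\emptyset, (1^m), (2, 1^{m-1})\}$; the boundary case $\lambda = \emptyset$ (i.e.\ $k = 0$) corresponds to $X = \Cent(\CA)$.

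The main obstacle, as in \cite[Lem.~6.16]{douglaspfeifferroehrle:invariants}, is tracking the sign convention carefully: one must verify that the permutation action of $S_a$ on the tensor product of top-degree cohomology classes reduces precisely to the sign representation. This hinges on each tensor factor being in \emph{odd} cohomological degree (degree $3$ in our setting, paralleling the degree-$1$ situation in the complex case), which is precisely what makes the quaternionic argument go through unchanged.
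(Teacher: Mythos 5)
Your proposal follows essentially the route the paper intends: the paper's proof is a one-line deferral to the complex case \cite[Lem.~6.16]{douglaspfeifferroehrle:invariants}, and your argument is that argument transported to degree $3$ --- a Künneth decomposition over the factors of $Z_G(X)=P_\lambda^\alpha$, vanishing of top-degree $S_{\lambda_i}$-invariants for parts $\lambda_i\geq 3$ (via case (1) of \Cref{thm:poincarecompred}, equivalently Brieskorn), and a sign argument excluding two or more parts equal to $2$ because the degree-$3$ generators anticommute. Two bookkeeping slips: your ``$k=\rk(X)$'' should read $\rk(X)=n-k$ with $k$ the number of parts of $\lambda$ (your displayed Künneth formula uses $k$ in this latter sense), and for $\lambda_i=2$ the citation of \Cref{prop:dim2} is not literally applicable (that statement concerns rank-two arrangements), though the claim there is trivial since $S_2$ fixes its single hyperplane.

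The one step whose justification needs repair is the claim that $N_G(X)/Z_G(X)$ contains an $S_a$ ``coming from the $S_n$-factor''. This is correct for the untwisted representatives $X_\lambda$ (the only representatives when $m<n$ or when some part is odd, by \Cref{thm:orbitreps}), but for the twisted representatives $X_{(2^a)}^\alpha$ with $\alpha\notin H$ --- which occur precisely when $\lambda=(2^a)\vdash n$ and $[K{:}H]$ is even, i.e.\ in cases the lemma must rule out --- a pure permutation moving the twisted block does not stabilize $X$, and for $a=2$ no two blocks can be interchanged by an element of $S_n$ at all. The conclusion survives: for $a\geq 3$ swap two untwisted blocks by a permutation; for $a=2$ take a monomial element $g=\diag(k_1,k_2,k_3,k_4)M(\sigma)\in G_n(K,H)$ with $\sigma=(13)(24)$, $k_1=\alpha k_2$ and $k_4=k_3\alpha$, choosing $k_2,k_3$ so that $k_1k_2k_3k_4\equiv(\alpha k_2k_3)^2\in H$ modulo $[K,K]$ (possible in the cyclic case by taking $k_2k_3\equiv\alpha^{-1}$, and automatic when $K/H\cong C_2\times C_2$). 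Such a $g$ normalizes $X$, interchanges the two hyperplanes of $\CA_X$, and hence acts by $-1$ on the one-dimensional space of top-degree $Z_G(X)$-invariants, which is all your argument requires. With this supplement your proof is complete and agrees with the paper's intended argument.
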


Notice that for a partition \(\lambda\), we have 
\[
\dim H^{3\rk(X_\lambda^\alpha)}(M(\CA_{X_\lambda^\alpha}))^{N_G(X_\lambda^\alpha)} = \dim H^{3\rk(X_\lambda)}(M(\CA_{X_\lambda}))^{N_G(X_\lambda)},
\] 
as \(X_\lambda^\alpha\) and \(X_\lambda\) only differ by a diagonal matrix in \(G_n(K, K)\).

We now study the fixed spaces corresponding to the partitions derived in \Cref{lem:tdiparts} in more detail.
As in \cite{douglaspfeifferroehrle:invariants}, 
let \[\eta_k = (1^{n - k})\text{, for }0\leq k\leq n - 1\text{, and }\tau_k = (21^{n - k - 1})\text{, for }1 \leq k \leq n - 1.\]
The only partitions of \(n\) are \(\eta_0\) and \(\tau_1\).
For \(n \geq 3\), each of the partitions hence index a unique orbit in \(L(\CA)\).
For \(n = 2\) and if \([K:H]\) is even, then the partition \(\tau_{n - 1} = (2)\) corresponds to two or four orbits.

Recall that the Coxeter group of type \(A_n\) for \(n \geq 0\) denotes the symmetric group \(S_{n + 1}\) acting on its irreducible \(n\)-dimensional representation.
In the following lemma, we write \(\CB_n^\BBH\) for the reflection arrangement of \(A_n\) considered as a quaternionic reflection group.
Further, we denote the image of the natural embedding of \(S_{n + 1}\) into \(\GL_{n + 1}(\BBH)\) by \(W_{n + 1}\).

\begin{lemma}
  \label{lem:tdipartsind}
  Let \(2\leq k \leq n - 1\).
  \begin{enumerate}[(1)]
    \item\label{lem:tdipartsind:1} \(H^{3k}(M(\CA_{X_{\eta_k}}))^{N_G(X_{\eta_k})} \cong H^{3k}(M(\CA_k(K)))^{G_k(K, K)}\).
    \item\label{lem:tdipartsind:2} If \(k < n - 1\), then \[H^{3k}(M(\CA_{X_{\tau_k}}))^{N_G(X_{\tau_k})} \cong H^{3k - 3}(M(\CA_{k - 1}(K)))^{G_{k - 1}(K, K)}\otimes H^3(M(\CB_2^\BBH)).\]
    \item\label{lem:tdipartsind:3} If \(k = n - 1\), then
      \begin{align*}
        H^{3k}(M(\CA_{X_{\tau_k}}))^{N_G(X_{\tau_k})} &= H^{3k}(M(\CA_{X_{\tau_k}}))^{Z_G(X_{\tau_k})} \\ &\cong H^{3n - 6}(M(\CA_{n - 2}(K)))^{G_{n - 2}(K, H)} \otimes H^3(M(\CB_2^\BBH)).
      \end{align*}
  \end{enumerate}
\end{lemma}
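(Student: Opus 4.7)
The strategy for all three parts is the same: first describe the restriction arrangement $\CA_{X_\lambda}$ explicitly as a product of simpler arrangements, then apply the Künneth formula to factor the top cohomology, and finally identify the $N_G(X_\lambda)$-action block by block. Using the description of $X_\lambda$ from \Cref{subsec:orbitreps} together with the explicit list of hyperplanes of $\CA_n(K)$ in \Cref{rem:imprimarr}, a direct check shows that for $X_{\eta_k} = \langle e_1,\dots,e_{n-k}\rangle$ the hyperplanes containing $X_{\eta_k}$ are exactly those supported on the last $k$ coordinates, so $\CA_{X_{\eta_k}}$ pulls back to $\CA_k(K)$ with the first $n-k$ coordinates contributing a contractible factor. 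For $X_{\tau_k} = \langle e_1+e_2, e_3,\dots, e_{n-k+1}\rangle$ the only contribution from the $\{1,2\}$-block is the single hyperplane $\ker(x_1-x_2)$, so $\CA_{X_{\tau_k}}$ decomposes as $\{\ker(x_1-x_2)\}$ in $\BBH^2$ together with the contractible factor coming from $e_3,\dots,e_{n-k+1}$ and the arrangement $\CA_{k-1}(K)$ on the last $k-1$ coordinates. Künneth then yields the corresponding tensor factorisation of $H^{3k}(M(\CA_{X_\lambda}))$ compatibly with the $G$-action.

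The second step is to analyze $N_G(X_\lambda)$. A matrix $g = (\diag(k_1,\dots,k_n),\sigma) \in G_n(K,H)$ lies in $N_G(X_\lambda)$ iff $\sigma$ permutes the blocks of $\Pi$ of equal size (and stabilises $I_0$ setwise) and, within each block $I_j$ of $\Pi$, the entries $k_i$ share a common value $\beta_j \in K$; the determinant condition $k_1 \cdots k_n \in H$ then imposes a single relation between these $\beta_j$'s and the entries indexed by $I_0$. For part~\labelcref{lem:tdipartsind:1}, the $n-k$ free entries outside $I_0$ absorb any determinant value (since $K$ is a group), so the projection of $N_G(X_{\eta_k})$ onto $I_0$ surjects onto $G_k(K,K)$, giving the claim. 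For part~\labelcref{lem:tdipartsind:2}, the absorbing role is played by the singleton blocks $\{3\},\dots,\{n-k+1\}$, which are nonempty precisely because $k < n-1$; the projection onto $I_0$ then surjects onto $G_{k-1}(K,K)$. The required triviality of the first-block action on $H^3(M(\{\ker(x_1-x_2)\})) \cong H^3(S^3)$ is immediate: the $S_2$-swap acts as the antipodal map on $\BBH^\times \simeq S^3$ and the scalar $\diag(\beta,\beta)$ acts by left-multiplication by $\beta$, both of degree $+1$.

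Part~\labelcref{lem:tdipartsind:3} is the technical heart: when $k = n-1$ the absorbing middle block is empty, so the constraint $\beta^2 k_3\cdots k_n \in H$ genuinely binds the first-block scalar $\beta$ to the last-block data. The centraliser $Z_G(X_{\tau_{n-1}})$ corresponds to $\beta = 1$ and hence projects onto $G_{n-2}(K,H)$ on the last block; combined with the triviality of the first-block action on $H^3$ this immediately yields the asserted isomorphism for $Z_G$-invariants. The remaining equality $H^{3(n-1)}(M)^{N_G} = H^{3(n-1)}(M)^{Z_G}$ reduces to showing that $N_G(X_{\tau_{n-1}})/Z_G(X_{\tau_{n-1}}) \cong K$ acts trivially on $H^{3(n-2)}(M(\CA_{n-2}(K)))^{G_{n-2}(K,H)}$. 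Representatives of non-trivial classes act on the arrangement block via compensating ``pseudo-reflections'' $\diag(\beta^{-2},1,\dots,1)$ lying in the possibly larger subgroup $G_{n-2}(K,\tilde H)$ with $\tilde H = H\cdot\{\beta^{-2}:\beta\in K\}$, and verifying this triviality is the main obstacle. I would attack it inductively by matching the dimensions of the $G_{n-2}(K,H)$- and $G_{n-2}(K,\tilde H)$-invariants computed via \eqref{eq:euler}, leveraging the Dowling-lattice description of parabolics from \Cref{prop:dowling}.
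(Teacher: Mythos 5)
Parts \labelcref{lem:tdipartsind:1} and \labelcref{lem:tdipartsind:2}, and the identification of the \(Z_G(X_{\tau_{n-1}})\)-invariants in part \labelcref{lem:tdipartsind:3}, are argued correctly and essentially as in the paper (which imports these steps from the complex case in \cite{douglaspfeifferroehrle:invariants}): your descriptions of \(\CA_{X_{\eta_k}}\) and \(\CA_{X_{\tau_k}}\), the Künneth factorisation, the block description of the normalizers, and the ``absorption'' of the determinant condition by the free coordinates are all fine.

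The gap is the last step of \labelcref{lem:tdipartsind:3}, and you flag it yourself. You correctly reduce the equality of \(N_G(X_\tau)\)- and \(Z_G(X_\tau)\)-invariants to the triviality of the action of \(N_G(X_\tau)/Z_G(X_\tau)\cong K\) on \(H^{3(n-2)}(M(\CA_{n-2}(K)))^{G_{n-2}(K,H)}\), and you correctly observe that this action goes through the possibly larger group \(G_{n-2}(K,\tilde H)\), \(\tilde H=H\cdot\{\beta^2\mid\beta\in K\}\). But you then only announce a strategy (``match dimensions via \eqref{eq:euler}'') without carrying it out, so as written this is a plan, not a proof — and it is not vacuous, since \(\tilde H\neq H\) does occur (e.g.\ \(K=\mathsf D_d\), \(d\) odd, \(H=\mathsf C_d\), where \(\tilde H=\mathsf C_{2d}\), or \(K=\mathsf T\), \(H=\mathsf D_2\), where \(\tilde H=K\)). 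The fix you propose does work, and more cheaply than re-running \eqref{eq:euler}: since \(G_{n-2}(K,H)\leq G_{n-2}(K,\tilde H)\), the \(\tilde H\)-invariants sit inside the \(H\)-invariants, so it suffices to compare dimensions; here \(K/\tilde H\) is the quotient of \(K/H\) by its subgroup of squares, hence is trivial, \(C_2\) or \(C_2\times C_2\), and the top-degree dimension at rank \(n-2\) (namely \(1\), \(2\) or \(4\) by \Cref{cor:poincareimprim}, resp.\ \Cref{thm:poincarecompred} when \(K\) is cyclic) depends only on the parities of \([K\!:\!H]\) and \(n-2\) and on whether \(K/H\) is cyclic; a short case check shows these data give the same value for \(H\) and \(\tilde H\) in every case. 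Since this uses the Poincaré polynomial at rank \(n-2\), you must organise the whole argument as a joint induction on \(n\) with \Cref{thm:poincareimprim}, and you should say so explicitly to rule out circularity. For comparison, the paper settles this point in one line, deducing the equality of \(N_G\)- and \(Z_G\)-invariants from the block description of \(N_G(X_\tau)\) and the fact that it acts by scalars on the \(\CB_2^\BBH\)-block; your analysis makes explicit that the induced action on the \(\CA_{n-2}(K)\)-factor is where the invariant-theoretic input is needed, but you still owe the verification sketched above.
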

\begin{proof}
  The claims in \labelcref{lem:tdipartsind:1} and \labelcref{lem:tdipartsind:2} follow as in the proof of \cite[Lem.~6.18]{douglaspfeifferroehrle:invariants}.

  Let \(k = n - 1\) and \(\tau := \tau_k = (2)\).
  We have \(Z_G(X_\tau) = G_{n - 2}(K, H)\times W_2\) and \[H^{3n - 3}(M(\CA_{X_\tau}))^{Z_G(X_\tau)} \cong H^{3(n - 2)}(M(\CA_{n - 2}(K)))^{G_{n - 2}(K, H)} \otimes H^3(M(\CB_2^\BBH))^{W_2},\] as in \cite[6.15~(a)]{douglaspfeifferroehrle:invariants}.
  The normalizer \(N_G(X_\tau)\) consists of the block diagonal matrices \[\begin{pmatrix} d w_{n - 2} & \\ & e w_{2}\end{pmatrix}\] where \(w_j\in W_j\), \(d\in \GL_{n - 2}(\BBH)\) is a diagonal matrix with entries in \(K\), \(e\in \GL_2(\BBH)\) is a scalar matrix with entries in \(K\), and \(d_{1,1}\cdots d_{n - 2, n - 2}\cdot e_{1,1}\cdot e_{2,2}\in H\).
  Hence, \(N_G(X_\tau)\) acts by scalars on \(\CB_2^\BBH\) and we conclude \[H^{3k}(M(\CA_{X_{\tau_k}}))^{N_G(X_{\tau_k})} = H^{3k}(M(\CA_{X_{\tau_k}}))^{Z_G(X_{\tau_k})}\] 
  which finishes the proof of \labelcref{lem:tdipartsind:3}.
\end{proof}

To be able to use \Cref{lem:tdipartsind} for inductive arguments, we first need to consider the cases \(n \leq 2\).
For \(n = 0\), \(G\) is the trivial group and we have \(\dim H^0(M(\CA))^G = 1\).
For \(n = 1\), \(\CA\) consists of a single hyperplane on which \(G\) acts trivially, so we have \(\dim H^3(M(\CA))^G = 1\) as well.

\begin{lemma}
  \label{lem:n2}
  We have
  \[\dim H^6(M(\CA_2(K)))^{G_2(K, H)} = \begin{cases}
      2,& \text{if }2\mid [K:H] \text{ and } K/H\text{ is cyclic,}\\
      4,& \text{if }2\mid [K:H] \text{ and } K/H\text{ is not cyclic,}\\
      1,& \text{otherwise.}
    \end{cases}\]
\end{lemma}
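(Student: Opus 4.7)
The plan is to apply \Cref{prop:dim2} to $G := G_2(K, H)$ acting on $\CA := \CA_2(K)$: this gives $\dim H^6(M(\CA))^G = a - 1$, where $a$ is the number of $G$-orbits on $\CA$, reducing the question to a combinatorial count. By \Cref{rem:imprimarr}, $\CA$ consists of the two coordinate hyperplanes $\ker(x_1), \ker(x_2)$ together with the diagonal hyperplanes $H_\zeta := \ker(x_1 - \zeta x_2)$ for $\zeta \in K$. The coordinate hyperplanes are swapped by the transposition $\sigma \in S_2 \leq G$ and fixed (as a set) by the diagonal subgroup $A_2(K,H)$, so they form a single orbit; it is disjoint from the orbits on diagonal hyperplanes because the $G$-action preserves the partition of $\CA$ into coordinate and diagonal hyperplanes.

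For the diagonal hyperplanes, a direct calculation using $g \cdot \ker(\alpha) = \ker(\alpha \circ g^{-1})$ yields $\diag(k_1, k_2) \cdot H_\zeta = H_{k_1 \zeta k_2^{-1}}$ and $\sigma \cdot H_\zeta = H_{\zeta^{-1}}$. Setting $Q := K/H$, which is abelian by $[K, K] \leq H$, the defining condition $k_1 k_2 \in H$ of $A_2(K, H)$ becomes $\bar k_1 = \bar k_2^{-1}$ in $Q$; consequently, the diagonal action descends to $\bar\zeta \mapsto \bar k_2^{-2}\bar\zeta$ on $Q$, sweeping out exactly multiplication by the subgroup $Q^2 = \{\bar q^2 : \bar q \in Q\}$ as $\bar k_2$ ranges over $Q$. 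Since $H \trianglelefteq K$, any two $H_\zeta, H_{\zeta'}$ with $\bar\zeta = \bar{\zeta'}$ are already $G$-conjugate (take $k_2 = 1$, $k_1 = \zeta'\zeta^{-1} \in H$), so orbits on $\{H_\zeta : \zeta \in K\}$ correspond bijectively to orbits of $\langle Q^2, \mathrm{inv}\rangle$ acting on $Q$. Because $Q/Q^2$ is an elementary abelian 2-group, inversion acts trivially on it, and the orbits are therefore exactly the cosets of $Q^2$ in $Q$, giving $|Q/Q^2|$ orbits. Combining, $a = 1 + |Q/Q^2|$ and $\dim H^6(M(\CA))^G = |Q/Q^2|$.

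The three cases then follow from \Cref{tab:kleinian}: $|Q/Q^2| = 1$ when $|Q|$ is odd; $|Q/Q^2| = 2$ when $Q$ is cyclic of even order; and $|Q/Q^2| = 4$ when $Q$ is non-cyclic, which by the table happens precisely for $Q \cong C_2 \times C_2$, namely $K = \mathsf D_d$ with $d$ even and $H = \mathsf C_d$. The main technical hurdle is the careful bookkeeping of the non-commutative action, in particular confirming that modulo $H$ the diagonal subgroup contributes exactly $Q^2$ (and not, say, a proper subgroup thereof) and that inversion becomes trivial on $Q/Q^2$; once those points are in hand, everything else is routine.
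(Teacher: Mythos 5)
Your proof is correct, and it shares its first step with the paper: both reduce the lemma via \Cref{prop:dim2} to counting the orbits of $G_2(K,H)$ on the hyperplanes of $\CA_2(K)$. The difference is how that count is obtained. The paper simply quotes \Cref{thm:orbitreps}: the hyperplane orbits are labelled by the partial partitions $(1)$ (always one orbit) and $(2)$ (one, two, or four orbits according to the parity of $[K:H]$ and whether $K/H$ is cyclic), giving $|\mathcal X(\CA,G)_1|\in\{2,3,5\}$ and hence the stated dimensions. You instead recompute the orbits directly: from $\diag(k_1,k_2)\cdot H_\zeta = H_{k_1\zeta k_2^{-1}}$ and $\sigma\cdot H_\zeta = H_{\zeta^{-1}}$, the constraint $k_1k_2\in H$ makes the diagonal part act on $Q=K/H$ exactly by multiplication by $Q^2$, the $H$-ambiguity is absorbed because $H$-translates of $\zeta$ are already conjugate, and inversion is trivial on $Q/Q^2$; so the diagonal hyperplanes split into $[Q:Q^2]$ orbits and $\dim H^6(M(\CA_2(K)))^{G_2(K,H)} = [Q:Q^2]$, which by \Cref{tab:kleinian} equals $2$, $4$, or $1$ in the three cases. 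This is in effect a self-contained rank-two rerun of the computation underlying \Cref{thm:orbitreps} (your $Q^2$-coset count is the paper's $\gcd([K:H],\lambda_1,\dots,\lambda_k)$ criterion, together with \Cref{lem:noncyclicrep}, specialized to $\lambda=(2)$). What it buys you is a uniform closed formula $[K/H:(K/H)^2]$ covering all three cases at once, and it avoids invoking the orbit-representative theorem, which in the paper is stated under the standing assumption $n\geq 3$, at $n=2$; what it costs is redoing by hand a computation the paper already has in general form.
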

\begin{proof}
  Let \(G = G_2(K, H)\).
  By \Cref{prop:dim2}, we only need to determine the cardinality of \(\mathcal X(\CA, G)_1\).
  The representatives \(\mathcal X(\CA, G)_1\) are labelled by the partial partitions \((2)\) and \((1)\) of \(n = 2\).
  The partial partition \((1)\) corresponds to a unique orbit representative, but \((2)\) does in general not.
  With the orbit representatives given in \Cref{thm:orbitreps}, we have
  \[|{\mathcal X(\CA, G)_1}| =
    \begin{cases}
      3,& \text{if }2\mid [K:H] \text{ and } K/H\text{ is cyclic,}\\
      5,& \text{if }2\mid [K:H]  \text{ and } K/H\text{ is not cyclic,}\\
      2,& \text{otherwise,}
    \end{cases}\]
  giving the claim.
\end{proof}

We are now prepared for the main theorem of this section.
Besides the labelling by partitions, we may also label the elements of \(\mathcal X(\CA, G)_k^{\mathrm{tdi}}\) by their reflection type, that is, their labelling in the classification \cite{cohen:quaternionic}.
Note that \(G_n(K, H)\) with \(K\) cyclic corresponds to a complex reflection group, so this case is covered by \Cref{thm:poincarecompred}.

\begin{table}
  \caption{\(\dim H^{3\rk(X)}(M(\CA_X))^{N_G(X)}\) for \((\CA, G_n(K, H))\), \(n\geq 3\), \(K\) not cyclic}
  \label{tab:tdi}
  \begin{tabular}{ccccc}
    \hline
    \multicolumn{2}{l}{rank} & \(k = 0\) & \multicolumn{2}{c}{\(1 \leq k \leq n - 2\)}\\
    \hline
    \multicolumn{2}{l}{partition} & \((1^n)\) & \((21^{n - k - 1})\) & \((1^{n - k})\)\\
    \multicolumn{2}{l}{reflection type} & \(A_0\) & \(G_{k - 1}(K, H)A_1\) & \(G_k(K, H)\)\\
    \hline
    \multirow{2}{*}{\([K:H]\) and \(n\) even} & \(K/H\) cyclic & 1 & 1 & 1\\
     & else & 1 & 1 & 1\\
     else & & 1 & 1 & 1\\
     \hline
  \end{tabular}
  \vskip1em
  \begin{tabular}{ccccc}
    \hline
    \multicolumn{2}{l}{rank} & \multicolumn{2}{c}{\(k = n - 1\)} & \(k = n\)\\
    \hline
    \multicolumn{2}{l}{partition} & \((2)\) & \((1)\) & \(\emptyset\) \\
    \multicolumn{2}{l}{reflection type} & \(G_{n - 2}(K, H)A_1\) & \(G_{n - 1}(K, H)\) & \(G_n(K, H)\) \\
    \hline
    \multirow{2}{*}{\([K:H]\) and  \(n\) even} & \(K/H\) cyclic & 2 & 1 & 2 \\
     & else & 4 & 1 & 4 \\
     else & & 1 & 1 & 1 \\
    \hline
  \end{tabular}
\end{table}

\begin{theorem}
  \label{thm:poincareimprim}
  Let \(G = G_n(K, H)\) be an imprimitive irreducible quaternionic reflection group with \(n\geq 3\) and assume that \(K\) is not cyclic.
  \Cref{tab:tdi} lists the elements of \(\mathcal X(\CA, G)^{\mathrm{tdi}}\) via their corresponding partitions and reflection types and the dimensions \(\dim H^{3\rk(X)}(M(\CA_X))^{N_G(X)}\) for \(X\in\mathcal X(\CA, G)^{\mathrm{tdi}}\).
\end{theorem}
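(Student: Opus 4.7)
The plan is to induct on $n$, with base cases $n \leq 2$ handled by the discussion preceding the theorem (for $n = 0, 1$) and by Lemma \ref{lem:n2} (for $n = 2$). Fix $n \geq 3$ and assume the theorem for all smaller dimensions. By Lemma \ref{lem:tdiparts}, only the partitions $\emptyset$, $(1^{n-k})$ and $(21^{n-k-1})$ can yield nonzero top-degree invariants; each nonempty instance contributes exactly one orbit representative, since the underlying partitions are partial partitions of an integer strictly smaller than $n$ and Theorem \ref{thm:orbitreps} assigns a single class to each. The dimensions at ranks $k < n$ are identified via Lemma \ref{lem:tdipartsind} together with the induction hypothesis, and the $k = n$ entry is then pinned down by the Euler identity \eqref{eq:euler}.

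For the orbit of $(1^{n-k})$ with $1 \leq k \leq n - 1$, Lemma \ref{lem:tdipartsind}\labelcref{lem:tdipartsind:1} identifies the invariants with $H^{3k}(M(\CA_k(K)))^{G_k(K, K)}$. Since $G_k(K, K) = K \wr S_k$ has $[K:K] = 1$ odd, the induction hypothesis (or Lemma \ref{lem:n2} when $k = 2$, or a direct computation when $k \leq 1$) places us in the \emph{else} row of Table \ref{tab:tdi} and returns dimension $1$. For the orbit of $(21^{n-k-1})$ with $1 \leq k \leq n - 2$, Lemma \ref{lem:tdipartsind}\labelcref{lem:tdipartsind:2} yields a tensor product whose first factor is $1$-dimensional by the same reduction and whose second factor $H^3(M(\CB_2^\BBH))$ is $1$-dimensional, giving product dimension $1$. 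The critical row is $k = n - 1$, partition $(2)$: Lemma \ref{lem:tdipartsind}\labelcref{lem:tdipartsind:3} reduces the dimension to $\dim H^{3(n-2)}(M(\CA_{n-2}(K)))^{G_{n-2}(K, H)}$ (the $\CB_2^\BBH$ factor again contributing $1$). Feeding $G_{n-2}(K, H)$ into the outer induction (using Lemma \ref{lem:n2} when $n - 2 = 2$ and direct computation when $n - 2 \leq 1$) and observing that $n$ and $n - 2$ share the same parity, the three parity cases of the table for $G_n(K, H)$ match those for $G_{n-2}(K, H)$ and produce the asserted $2$, $4$, or $1$.

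Finally, the rank $k = n$ entry for partition $\emptyset$ is recovered by solving \eqref{eq:euler} for its sole unknown term. Substituting all previously determined dimensions, the alternating contributions from $(1^{n-k})$ and $(21^{n-k-1})$ for $1 \leq k \leq n - 2$ sum to $0$ when $n$ is even and to $-2$ when $n$ is odd; combining these with the rank-$0$ contribution of $+1$ and the rank-$(n-1)$ contribution of $(-1)^{n-1}(1 + \delta)$ with $\delta \in \{1, 2, 4\}$, the unknown is forced to equal $1$, $2$, or $4$ as predicted. The main obstacle I anticipate is twofold: the parity bookkeeping in the Euler sum must be carried out separately in each of the three regimes of the table, and one must verify carefully that $N_G(X_{(2)})/Z_G(X_{(2)})$ acts trivially on the relevant $Z_G$-invariants, an assertion encapsulated in Lemma \ref{lem:tdipartsind}\labelcref{lem:tdipartsind:3}; once these are granted, the table is established row by row.
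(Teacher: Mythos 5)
Your proposal is correct and follows essentially the same route as the paper: restrict the candidate orbits via \Cref{lem:tdiparts}, obtain the columns $k\le n-1$ from \Cref{lem:tdipartsind} and induction (with \Cref{lem:n2} and the trivial small-rank cases as base, the crucial entry being the $(2)$-column reduction to $G_{n-2}(K,H)$), and recover the $k=n$ column from the Euler identity \eqref{eq:euler}, exactly as in the paper's proof. The only blemish is your justification of orbit uniqueness: $\eta_0=(1^n)$ and $\tau_1=(21^{n-2})$ are partitions of $n$ itself, not of an integer strictly smaller than $n$, but each still indexes a single orbit for $n\ge 3$ by \Cref{thm:orbitreps} because their parts have gcd $1$ (as noted in the paper before the theorem), so the conclusion stands.
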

\begin{proof}
  The information regarding the partitions and reflection types in \Cref{tab:tdi} follows from \Cref{lem:tdiparts}.
  We verify the dimensions given in \Cref{tab:tdi}.
  For \(k = 0\), we have \(X_{\eta_0} = V\) and \(Z_G(X_{\eta_0})\) is the trivial group, so indeed \(\dim H^0(M(\CA_V)) = 1\).
  For \(k = 1\), the arrangements \(\CA_{X_{\lambda}}\) for \(\lambda\in\{\tau_1,\eta_1\}\) both consist of a single hyperplane, on which the normalizer \(N_G(X_\lambda)\) acts trivially.
  Hence \(\dim H^3(M(\CA_{X_\lambda})) = 1\) in both cases.
  Let \(2\leq k \leq n - 2\).
  Then we have \[\dim H^{3k}(M(\CA_{X_{\eta_k}}))^{N_G(X_{\eta_k})} = \dim H^{3k}(M(\CA_k(K)))^{G_k(K, K)}\] and \[\dim H^{3k}(M(\CA_{X_{\tau_k}}))^{N_G(X_{\tau_k})} = \dim H^{3k - 3}(M(\CA_{k - 1}(K)))^{G_{k - 1}(K, K)}\] by \Cref{lem:tdipartsind}.
  This verifies the entries of the columns labelled \(1 \leq k \leq n - 2\) of \Cref{tab:tdi} by induction because we have \([K:K] = 1\).

  Let \(k = n - 1\).
  We can argue as in the previous case for \(\eta_{n - 1}\).
  For \(\tau_{n - 1}\), \Cref{lem:tdipartsind} gives \[\dim H^{3k}(M(\CA_{X_{\tau_k}}))^{N_G(X_{\tau_k})} = \dim H^{3n - 6}(M(\CA_{n - 2}(K)))^{G_{k - 2}(K, H)}.\]
  So the entries in this column of the table again follow by induction.

  The entries of the last column follow using \eqref{eq:euler} and what has been proved so far.
\end{proof}

Recall that the cohomology of \(M(\CA)\) only lives in degrees divisible by 3.
To increase readability, we present the Poincaré polynomials in the following corollary evaluated at \(t^{1/3}\).

\begin{corollary}
  \label{cor:poincareimprim}
  Let \(G = G_n(K, H)\) be an imprimitive quaternionic reflection group with \(n\geq 2\) and assume that \(K\) is not cyclic.
  \begin{enumerate}[(1)]
    \item\label{cor:poincareimprim:1} If both \([K:H]\) and \(n\) are even and \(K/H\) is cyclic, then \[P(\CA(G), G;t^{1/3}) = 1 + 2t + \cdots + 2t^{n - 2} + 3t^{n - 1} + 2t^n.\]
    \item\label{cor:poincareimprim:2} If both \([K:H]\) and \(n\) are even and \(K/H\) is not cyclic, then \[P(\CA(G), G;t^{1/3}) = 1 + 2t + \cdots + 2t^{n - 2} + 5t^{n - 1} + 4t^n.\]
    \item\label{cor:poincareimprim:3} If \([K:H]\) or \(n\) are odd, then \[P(\CA(G), G;t^{1/3}) = 1 + 2t + \cdots + 2t^{n - 1} + t^n.\]
  \end{enumerate}
\end{corollary}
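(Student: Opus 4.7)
The plan is to read off the coefficient of $t^k$ in $P(\CA(G), G; t^{1/3})$ — which equals $\dim H^{3k}(M(\CA))^G$ — for each $k \in \{0, 1, \dots, n\}$, and to check that the resulting sequence matches one of the three cases in the corollary. Two ranges must be handled: $n = 2$ as a base case, and $n \geq 3$ where the inductive machinery is available.

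For $n \geq 3$, I would invoke Proposition \ref{Gdecomp}\labelcref{Gdecomp:3} to write
\[
H^{3k}(M(\CA))^G \cong \bigoplus_{X \in \mathcal X(\CA, G)_k} H^{3k}(M(\CA(Z_G(X))))^{N_G(X)},
\]
so that only the subset $\mathcal X(\CA, G)_k^{\mathrm{tdi}}$ contributes. By Theorem \ref{thm:poincareimprim}, the set $\mathcal X(\CA, G)_k^{\mathrm{tdi}}$ is listed by partition in Table \ref{tab:tdi} together with the value $\dim H^{3k}(M(\CA_X))^{N_G(X)}$ for each entry. Summing the entries column-by-column gives a coefficient of $1$ at $k = 0$ (from $(1^n)$), a coefficient of $1 + 1 = 2$ at each $1 \leq k \leq n - 2$ (from $(21^{n-k-1})$ and $(1^{n-k})$), a coefficient of $2 + 1 = 3$, $4 + 1 = 5$, or $1 + 1 = 2$ at $k = n-1$ (summing the contributions of $(2)$ and $(1)$ according to which row of Table \ref{tab:tdi} applies), and a coefficient of $2$, $4$, or $1$ at $k = n$ (the $\emptyset$ column). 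This precisely reproduces the three formulas of the corollary.

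For the base case $n = 2$, Theorem \ref{thm:poincareimprim} does not apply directly, but Proposition \ref{prop:dim2} asserts that $P(\CA(G), G; t) = 1 + at^3 + (a-1)t^6$, where $a$ is the number of $G$-orbits on $\CA$. The quantity $a - 1 = \dim H^6(M(\CA_2(K)))^{G_2(K,H)}$ is computed in Lemma \ref{lem:n2} and equals $2$, $4$, or $1$ in the three cases respectively, so $a \in \{3, 5, 2\}$ and substitution yields $P(\CA(G), G; t^{1/3}) = 1 + 3t + 2t^2$, $1 + 5t + 4t^2$, or $1 + 2t + t^2$, which agrees with the corollary when one reads the statement at $n = 2$ (the range $1 \leq k \leq n - 2$ being empty).

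The real work has already been done in Theorem \ref{thm:poincareimprim} and Lemma \ref{lem:n2}; the only point requiring attention is the bookkeeping at $n = 2$, to confirm that collapsing the middle range $1 \leq k \leq n - 2$ to the empty range is consistent with the closed-form expressions in each case. Everything else is a direct tally of Table \ref{tab:tdi} read column-wise.
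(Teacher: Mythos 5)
Your proposal is correct and follows essentially the same route as the paper: for $n\geq 3$ a column-by-column tally of Table \ref{tab:tdi} via the decomposition of \Cref{Gdecomp}, and for $n=2$ the combination of \Cref{prop:dim2} with \Cref{lem:n2}. The only point worth making explicit (which your tally silently assumes) is that for $n\geq 3$ each partition in the table indexes a \emph{unique} $G$-orbit in $L(\CA)$, so the table entries can simply be summed within each rank.
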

\begin{proof}
  For \(n = 2\), this follows from \Cref{lem:n2} and \Cref{prop:dim2}.
  For \(n \geq 3\), we may use \Cref{tab:tdi} together with \[\dim H^{3k}(M(\CA))^G = \dim\left(\bigoplus_{X\in\mathcal X(\CA, G)_k}H^{3k}(M(\CA_X))^{N_G(X)}\right).\]
  Recall that every partition in the table corresponds to a unique element of \(\mathcal X(\CA, G)\).
\end{proof}

\section{Invariants of the primitive groups}
\label{sec:primgroups}

It remains to determine the Poincaré polynomials for the primitive irreducible quaternionic reflection groups.
As discussed in \Cref{sec:quatreflgroups}, almost all of these groups act on a vector space of quaternionic dimension \(n = 2\), so are covered by \Cref{prop:dim2}.
There are precisely seven groups in dimension higher than 2 which are labelled \(W(Q)\), \(W(R)\), \(W(S_1)\), \(W(S_2)\), \(W(S_3)\), \(W(T)\) and \(W(U)\) in \cite{cohen:quaternionic}.
\Cref{tab:primgroups} lists the Poincaré polynomials \(P(\CA(G), G; t)(t^{1/3})\) for these groups.
These polynomials were computed using the computer algebra system \textsf{OSCAR} \cite{Dec+25,Osc25}.

\begin{table}
  \caption{Poincaré polynomials of \(H^*(M(\CA(G)))^G\) for primitive irreducible quaternionic reflection groups \(G\) in dimension \(n > 2\).}
  \label{tab:primgroups}
  \begin{tabular}{p{4em}p{2em}l}
    \hline
    \(G\) & \(n\) & \(P(\CA(G), G;t^{1/3})\) \\
    \hline
    \(W(Q)\) & 3 & \(1 + t\)\\
    \(W(R)\) & 3 & \(1 + t\)\\
    \(W(S_1)\) & 4 & \(1 + t + t^3 + t^4\)\\
    \(W(S_2)\) & 4 & \(1 + t + t^3 + t^4\)\\
    \(W(S_3)\) & 4 & \(1 + t + t^3 + t^4\)\\
    \(W(T)\) & 4 & \(1 + t + t^3 + t^4\)\\
    \(W(U)\) & 5 & \(1 + t + t^4 + t^5\)\\
    \hline
  \end{tabular}
\end{table}

For these computations, we used the matrix generators of the groups one obtains from the root systems given in \cite{cohen:quaternionic}, see also \cite[\S 7]{BST23}.
For each group \(G\) with reflection arrangement \(\CA = \CA(G)\), we constructed a vector space basis of the corresponding Orlik--Solomon algebra \(H^*(M(\CA))\) via a \emph{non-broken circuit basis} using \cite[Algorithm~NBC]{BDPR13}.
Any homogeneous element of \(H^*(M(\CA))\) can be efficiently written in this basis with the algorithm given in \cite[Proof of Theorem~2.5]{CE01}.
The action of \(G\) on \(H^*(M(\CA))\) is linear, so every component \(H^k(M(\CA))\) is a representation of \(G\).
We can now explicitly construct matrices in \(\GL(H^k(M(\CA)))\) corresponding to the action of \(G\) on the non-broken circuit basis.
This allows us to determine the character \(\chi_k\) of the representation \(H^k(M(\CA))\).
The \(k\)-th coefficient of the Poincaré polynomial of \(H^*(M(\CA))^G\) is then the scalar product of \(\chi_k\) with the trivial character of \(G\).

\section{Bases for \(H^*(M(\CA))^G\)}
\label{sec:bases}
We close with a discussion of bases of \(H^*(M(\CA(G)))^G\), analogous to \cite[\S 7]{douglaspfeifferroehrle:invariants}.
If \(\dim(V) = 2\), bases are given in \Cref{prop:dim2}.
Bases in the complex reducible case are constructed in \cite{douglaspfeifferroehrle:invariants}.

\subsection{The imprimitive groups}

Let \(G = G_n(K, H)\) be an imprimitive quaternionic reflection group for finite groups \(K, H\leq \BBH^\times\) and \(n\geq 3\).
As before, we assume that \(H\neq \{1\}\), see \Cref{rem:noH1}.
Let \(\CA = \CA(G) = \CA_n(K)\) be the corresponding quaternionic reflection arrangement.
We consider the hyperplanes \(H_1 = \ker(x_1)\) and \(H_i = \ker(x_{i - 1} - x_i)\) for \(2\leq i \leq n\) in \(\CA\).
To improve readability, we write \(h_i = e_{H_i}\) for the generators in \(H^3(M(\CA))\).
For \(\alpha\in K\), we further put \(H_2^\alpha = \ker(x_1 - \alpha x_2)\in \CA\) and \(h_2^\alpha\in H^3(M(\CA))\).
Notice \(h_2^1 = h_2\).

We construct bases of \(H^*(M(\CA))\) inductively.
For this, we only need to consider those parabolic subgroups of \(G\) with fixed space \(X\in \mathcal X(\CA, G)^{\mathrm{tdi}}\).
Write \(\CT(\CA, G)^{\mathrm{tdi}}\) for the set of reflection types of these groups, that is, we have \[\CT(\CA, G)^{\mathrm{tdi}} = \{A_0, A_1\}\cup \{G_k(K, H)\mid 1\leq k\leq n\} \cup \{G_{k - 1}(K, H)A_1\mid 2\leq k \leq n - 1\}\] by \Cref{thm:poincareimprim}.
Recall that for \(n \geq 3\) every element of \(\CT(\CA, G)^{\mathrm{tdi}}\) corresponds uniquely to an orbit representative \(\mathcal X(\CA, G)^{\mathrm{tdi}}\).

For the cases \(T \in\{A_0, A_1\}\), we put \(B_{A_0}^G = \{1\}\) and \(B_{A_1}^G = \{h_2\}\).
We define
\begin{align*}
  b_T^{G,\alpha} = \begin{cases}
    h_1h_2^\alpha h_3\cdots h_k,& \text{if }T = G_k(K, H)\text{ and }1\leq k \leq n,\\
    h_1h_2^\alpha h_3\cdots h_{k - 1}h_{k + 1}, & \text{if }T = G_{k - 1}(K, H)A_1\text{ and }2\leq k \leq n - 1,
  \end{cases}
\end{align*}
where we again omit the symbol \(\wedge\) for the products in \(H^*(M(\CA))\).
If \(T = G_k(K, H)\) with \(1\leq k \leq n - 1\) or \(T = G_{k - 1}(K, H)A_1\) with \(2\leq k\leq n - 2\), set \(B_T^G = \{b_T^{G, 1}\}\).
For \(T \in \{G_n(K, H), G_{n - 2}(K, H)A_1\}\), we distinguish the following cases.
\begin{itemize}
  \item If \([K:H]\) or \(n\) is odd, put \(B_T^G = \{b_T^{G, 1}\}\).
  \item If \([K:H]\) and \(n\) are even and \(K/H\) is cyclic, let \(\alpha\in K\) be a generator of \(K/H\) and put \(B_T^G = \{b_T^{G, 1}, b_T^{G, \alpha}\}\).
  \item If \([K:H]\) and \(n\) are even and \(K/H\) is not cyclic, let \(1, \alpha, \beta,\gamma\in K\) be a system of representatives of the residue classes in \(K/H\).
    Put \(B_T^G = \{b_T^{G, 1}, b_T^{G, \alpha}, b_T^{G,\beta}, b_T^{G,\gamma}\}\).
\end{itemize}

For \(T\in \CT(\CA, G)^{\mathrm{tdi}}\), we write \(X_T\in\mathcal X(\CA, G)^{\mathrm{tdi}}\) for the corresponding orbit representative.
\begin{theorem}
  \label{thm:basisimprim}
  Let \(G = G_n(K, H)\), \(n\geq 3\), be an irreducible imprimitive quaternionic reflection group with reflection arrangement \(\CA\). Assume that \(K\) is not cyclic.
  For \(k\geq 0\), let \(\CT(\CA, G)_k^{\mathrm{tdi}}\) be the set of reflection types of rank \(k\) with top degree invariants.
  \begin{enumerate}[(1)]
    \item\label{thm:basisimprim:1} For \(T\in \CT(\CA, G)^{\mathrm{tdi}}\), the set \(\epsilon_{N_G(X_T)}\cdot B_T^G\) is a basis of \(H^{\rk X_T}(M(\CA_{X_T}))^{N_G(X_T)}\).
    \item\label{thm:basisimprim:2} For \(k\geq 0\), the disjoint union \(\coprod_{T\in\CT(\CA, G)_k^{\mathrm{tdi}}} \epsilon_G \cdot B_T^G\) is a basis of \(H^k(M(\CA))^G\).
  \end{enumerate}
\end{theorem}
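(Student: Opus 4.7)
The plan is to establish part \labelcref{thm:basisimprim:1} by induction on $\rk X_T$ and then derive part \labelcref{thm:basisimprim:2} as a direct consequence. The reduction of \labelcref{thm:basisimprim:2} to \labelcref{thm:basisimprim:1} uses \Cref{Gdecomp}\labelcref{Gdecomp:3} to split $H^{3k}(M(\CA))^G$ as the direct sum of $H^{3k}(M(\CA_{X_T}))^{N_G(X_T)}$ over $T\in \CT(\CA,G)^{\mathrm{tdi}}_k$, and \Cref{Gdecomp}\labelcref{Gdecomp:2} to relate the local averages $\epsilon_{N_G(X_T)}\cdot b_T^{G,\alpha}$ to the global averages $\epsilon_G\cdot b_T^{G,\alpha}$. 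Concatenating the local bases produced by \labelcref{thm:basisimprim:1} then yields the disjoint union asserted in \labelcref{thm:basisimprim:2}.

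For the base of the induction, the cases $T\in\{A_0,A_1,G_1(K,H)\}$ are handled directly: the invariant space is one-dimensional and $B_T^G$ consists of the evident generator. For an intermediate rank, i.e.\ $T=G_k(K,H)$ with $1\leq k\leq n-1$ or $T=G_{k-1}(K,H)A_1$ with $2\leq k\leq n-2$, the Künneth-type isomorphisms from \Cref{lem:tdipartsind}\labelcref{lem:tdipartsind:1}--\labelcref{lem:tdipartsind:2} identify $H^{3k}(M(\CA_{X_T}))^{N_G(X_T)}$ with the top invariants of the smaller arrangement $\CA_k(K)$ or $\CA_{k-1}(K)$ under the full wreath product $G_k(K,K)$ or $G_{k-1}(K,K)$ (possibly tensored with the one-dimensional $H^3(M(\CB_2^\BBH))$). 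Since $[K:K]=1$ is odd, these wreath products fall under case \labelcref{cor:poincareimprim:3} of \Cref{cor:poincareimprim} and have a one-dimensional top invariant space, spanned inductively by $\epsilon_{G_j(K,K)}\cdot h_1h_2\cdots h_j$. The Künneth isomorphism identifies this with $b_T^{G,1}$.

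The crux lies at the top ranks $T=G_{n-2}(K,H)A_1$ (rank $n-1$) and $T=G_n(K,H)$ (rank $n$), where $|B_T^G|$ may exceed one. For the former, \Cref{lem:tdipartsind}\labelcref{lem:tdipartsind:3} reduces matters to the top invariants of the smaller imprimitive group $G_{n-2}(K,H)$, tensored with the one-dimensional $A_1$-factor: the inductive hypothesis supplies a basis $\{\epsilon\cdot b_{G_{n-2}(K,H)}^{G,\alpha}\otimes h\}$ matching $B_T^G$ (with the same parity conditions on $[K:H]$ and $n-2$ as on $n$). For $T=G_n(K,H)$, the Euler identity \eqref{eq:euler} together with the lower-rank dimensions already established pins down the correct dimension predicted by \Cref{tab:tdi}; the elements $\epsilon_G\cdot b_T^{G,\alpha}$ manifestly lie in the invariants. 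The main obstacle is then the linear independence of these averages for $\alpha$ running over the chosen system of representatives.

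The plan for linear independence is to analyze the $G$-action on the specific flags $\{H_1,H_2^\alpha,H_3,\ldots,H_n\}$ via the stabilizer of $(H_1,H_3,\ldots,H_n)$ in $G=G_n(K,H)$. Since $n\geq 3$, only the diagonal matrices $\diag(k_1,k,k,\ldots,k)\in G_n(K,H)$ with $k_1k^{n-1}\in H$ stabilize this partial flag, and such an element sends $H_2^\alpha$ to $H_2^{k_1\alpha k^{-1}}$. Passing to $K/H$ (which is abelian because $[K,K]\leq H$), the orbit of $\alpha H$ under this stabilizer is $\alpha(K/H)^n$, so the distinct coset representatives prescribed in the definition of $B_T^G$ produce flags lying in distinct $G$-orbits. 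Working in the non-broken-circuit basis of the top Orlik--Solomon algebra from \Cref{cohomologyquaternionicarrangement}, the $\epsilon_G$-averages indexed by distinct $\alpha$ therefore have disjoint supports, yielding linear independence; a dimension count against \Cref{tab:tdi} shows the constructed set is a basis and completes the induction.
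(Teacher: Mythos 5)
Your reduction of \labelcref{thm:basisimprim:2} to \labelcref{thm:basisimprim:1}, the base cases, the intermediate types \(G_k(K,H)\) and \(G_{k-1}(K,H)A_1\) via \Cref{lem:tdipartsind}, and the rank-\((n-1)\) type \(G_{n-2}(K,H)A_1\) all match the paper's argument. The divergence, and the problem, is the top type \(T=G_n(K,H)\), which is the only genuinely delicate case. Your orbit/``disjoint support'' argument has a real gap. First, the orbit analysis is incomplete: to decide whether \(\{H_1,H_2^\alpha,H_3,\dots,H_n\}\) and \(\{H_1,H_2^\beta,H_3,\dots,H_n\}\) lie in one \(G\)-orbit you must consider \emph{all} \(g\in G\) mapping the first set onto the second, not only diagonal elements fixing the partial flag \((H_1,H_3,\dots,H_n)\) coordinatewise; elements with nontrivial permutation part can stabilize \(\{H_1,H_3,\dots,H_n\}\) setwise or permute the roles of the hyperplanes altogether, so the stabilizer is larger than \(\{\diag(k_1,k,\dots,k)\}\) and the conclusion ``distinct cosets give distinct orbits'' is not established. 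Second, and more fundamentally, distinct \(G\)-orbits of the defining hyperplane sets do \emph{not} give disjoint supports in the non-broken-circuit basis: in top degree every independent \(n\)-subset has closure \(\Cent(\CA)\), and a non-NBC monomial \(e_{H_{a_1}}\cdots e_{H_{a_n}}\) expands (via the relations of \Cref{cohomologyquaternionicarrangement}) into NBC monomials indexed by \emph{different} hyperplane subsets; already in rank 2 with three concurrent hyperplanes one has \(e_2e_3=e_1e_3-e_1e_2\). Since \(\epsilon_G\cdot h_1h_2^\alpha h_3\cdots h_n\) is a signed sum over all \(G\)-translates, each of which must then be rewritten in the NBC basis, the supports for distinct \(\alpha\) can and do overlap, so linear independence does not follow. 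Finally, you never address why each average is nonzero: the generators sit in odd degree 3, so set-stabilizing elements can act on a top monomial by \(-1\) and cause cancellation in \(\epsilon_G\cdot h_1h_2^\alpha h_3\cdots h_n\).

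The paper handles this case differently: it applies the \(G\)-equivariant derivation \(\partial\) of degree \(-3\) to the candidates \(\epsilon_G\cdot h_1h_2^\xi h_3\cdots h_n\), kills all but two terms of \(\partial\) using the vanishing of invariants for types without top degree invariants (type \(A_{n-2}\) via Brieskorn, and \(G_{i-1}(K,H)A_{n-i}\)), and then reads off linear independence of the images inside \(H^{3(n-1)}(M(\CA))^G\), whose basis has already been established by induction; this simultaneously settles nonvanishing and independence. If you want to keep your route, you would need both a complete orbit computation for unordered \(n\)-sets of hyperplanes and an argument replacing ``disjoint NBC supports'' (for instance a suitable \(G\)-stable projection or pairing separating the four classes), which is substantially more work than the derivation trick.
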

\begin{proof}
  Part \labelcref{thm:basisimprim:2} follows from \labelcref{thm:basisimprim:1} using the identity \[H^{3k}(M(\CA))^G = \sum_{T\in \mathcal T(\CA, G)_k^{\mathrm{tdi}}} \epsilon_G\cdot H^{3k}(M(\CA_{X_T}))^{N_G(X_T)}\] from \Cref{Gdecomp} \labelcref{Gdecomp:2}.

  To prove part \labelcref{thm:basisimprim:1}, we consider the possibilities for \(T\) according to \Cref{thm:poincareimprim}.
  For \(T\in\{A_0, A_1, G_1(K, H)\}\), the claim is clear.
  Let \(T = G_k(K, H)\) with \(2\leq k \leq n - 1\).
  Then \(\dim(H^{3k}(M(\CA_{X_T}))^{N_G(X_T)}) = 1\) and we need to show that \(\epsilon_{N_G(X_T)} \cdot b_T^{G,1}\neq 0\).
  The partial partition corresponding to \(T\) is \((1^{n - k})\), so by \Cref{lem:tdipartsind} \labelcref{lem:tdipartsind:1}, we have an isomorphism \(H^{3k}(M(\CA_{X_T}))^{N_G(X_T)} \cong H^{3k}(M(\CA_k(K)))^{G_k(K, K)}\).
  This isomorphism is induced from the isomorphism \(\CA_{X_T} \cong \CA_k(K)\) and hence sends \(h_1h_2\cdots h_k\) in \(H^{3k}(M(\CA_{X_T}))\) to \(h_1h_2\cdots h_k\) in \(H^{3k}(M(\CA_k(K)))\).
  Then \(\epsilon_{N_G(X_T)}\cdot h_1h_2\cdots h_k\) is sent to \(\epsilon_{G_k(K, K)}\cdot h_1h_2\cdots h_k\) because the isomorphism maintains the different group actions.
  By induction, \(H^{3k}(M(\CA_k(K)))^{G_k(K, K)}\) is 1-dimensional with basis \(\{\epsilon_{G_k(K, K)}\cdot h_1h_2\cdots h_k\}\).
  We conclude \(\epsilon_{N_G(X_T)}\cdot h_1h_2\cdots h_k\neq 0\) and hence \(\{\epsilon_{N_G(X_T)}\cdot b_T^{G, 1}\}\) is a basis for \(H^{3k}(M(\CA_{X_T}))^{N_G(X_T)}\) as claimed.

  Let \(T = G_{k - 1}(K, H)A_1\) with \(2\leq k\leq n - 2\).
  Then again \(\dim(H^{3k}(M(\CA_{X_T}))^{N_G(X_T)}) = 1\) and we need to show that \(\epsilon_{N_G(X_T)} b_T^G\neq 0\).
  This follows analogously to the previous case using the isomorphism in \Cref{lem:tdipartsind} \labelcref{lem:tdipartsind:2}.

  For \(T = G_{n - 2}(K, H)A_1\), we use the isomorphism \[H^{3n - 3}(M(\CA_{X_T}))^{N_G(X_T)} \cong H^{3n - 6}(M(\CA_{n - 2}(K)))^{G_{n - 2}(K, H)}\otimes H^3(M(\CB_2^\BBH))\] from \Cref{lem:tdipartsind} \labelcref{lem:tdipartsind:3}.
  By induction and \Cref{prop:dim2}, we have that \(\epsilon_{G_{n - 2}(K, H)}\cdot B_{G_{n - 2}(K, H)}^{G_{n - 2}(K, H)}\) is a basis for \(H^{3n - 6}(M(\CA_{n - 2}(K)))^{G_{n - 2}(K, H)}\).
  The desired basis of \(H^{3n - 3}(M(\CA_{X_T}))^{N_G(X_T)}\) follows with the given isomorphism.

  Finally, let \(T = G_n(K, H)\).
  We assume that \(\dim(H^{3n}(M(\CA))^G) = 4\) and in particular \(n \geq 4\), the other cases follow analogously.
  We need to show that the set \[\epsilon_G\cdot B_G^G = \{\epsilon_G\cdot h_1h_2h_3\cdots h_n, \epsilon_G\cdot h_1h_2^\alpha h_3\cdots h_n,\epsilon_G\cdot h_1h_2^\beta h_3\cdots h_n,\epsilon_G\cdot h_1h_2^\gamma h_3\cdots h_n\}\] with \(1,\alpha,\beta,\gamma\in K\) a system of representatives of \(K/H\) is a basis for \(H^{3n}(M(\CA))^G\).
  We apply the derivation \(\partial\) introduced in \Cref{sec:invariants} to these elements:
  \[\partial(\epsilon_G\cdot h_1h_2^\xi h_3\cdots h_n) = \epsilon_G\cdot h_2^\xi h_3\cdots h_n - \epsilon_G\cdot h_1 h_3\cdots h_n + \sum_{i = 3}^n(-1)^{i - 1}\epsilon_G\cdot h_1h_2^\xi h_3\cdots \widehat{h_i}\cdots h_n,\] where \(\xi\in\{1,\alpha,\beta,\gamma\}\).
  Consider the element \(h_2^\xi h_3\cdots h_n\).
  The pointwise stabilizer of the intersection \(X = H_2^\xi\cap\cdots\cap H_n\) is of type \(A_{n - 2}\).
  Hence \[H^{3n - 3}(M(\CA_X))^{Z_G(X)} \cong H^{3n - 3}(M(\CB_{n - 2}^\BBH))^{W_{n - 1}} = 0\] by \cite[Thm.~7]{Bri73} and therefore \(\epsilon_G \cdot h_2^\xi h_3\cdots h_n = 0\).
  Similarly, one sees \(\epsilon_G \cdot h_1h_3\cdots h_n = 0\) and \(\epsilon_G \cdot h_1h_2^\xi h_3\cdots \widehat{h_i}\cdots h_n = 0\) for \(3\leq i \leq n - 2\) because these elements correspond to the reflection types \(G_1(K, H)A_{n - 2}\) and \(G_{i - 1}(K, H)A_{n - i}\), respectively, which do not have top degree invariants for \(n \geq 4\).
  We are left with \[\partial(\epsilon_G \cdot h_1h_2^\xi h_3\cdots h_n) = \epsilon_G \cdot h_1h_2^\xi h_3\cdots h_{n - 2}h_n - \epsilon_G \cdot h_1 h_2^\xi h_3\cdots h_{n - 1}\in H^{3n - 3}(M(\CA))^G.\]
  By the results so far, we have the basis \[\{\epsilon_G\cdot h_1h_2\cdots h_{n - 1},\epsilon_G\cdot h_1h_2^\xi h_3\cdots h_{n - 2} h_n \mid \xi = 1,\alpha, \beta,\gamma\}\] of \(H^{3n - 3}(M(\CA))^G\).
  The elements \(\epsilon_G\cdot h_1h_2^\xi h_3\cdots h_{n - 1}\) must be multiples of \(\epsilon_G\cdot h_1h_2\cdots h_{n - 1}\) because \(\dim(H^{3n - 3}(M(\CA_{n - 1}(K)))^{G_{n - 1}(K, H)} = 1\).
  Therefore the elements \(\partial(\epsilon_G \cdot h_1h_2^\xi h_3\cdots h_n)\) with \(\xi\in\{1, \alpha,\beta,\gamma\}\) are linearly independent in \(H^{3n - 3}(M(\CA))^G\).
  It follows that \(\epsilon_G \cdot B_G^G\) is linearly independent in \(H^{3n}(M(\CA))^G\) and hence is a basis.
\end{proof}

\begin{remark}
  \label{rem:nogens}
  We should emphasize that the hyperplanes \(H_1,\dots, H_n\) and \(H_2^\alpha\) do not correspond to a set of generators of the group \(G_n(K, H)\) unlike in the complex case discussed in \cite{douglaspfeifferroehrle:invariants}.
  Indeed, the hyperplane \(H_1\) is the fixed space of any element \(\diag(g, 1,\dots, 1)\) with \(g\in H\setminus\{1\}\).
  But the group \(H\) is in general not cyclic, so in a set of generators of \(G_n(K, H)\) there might be two non-redundant generators with fixed space \(H_1\).
\end{remark}

\subsection{The primitive groups}
Let \(G\leq\GL(V)\) be one of the seven primitive irreducible quaternionic reflection groups, which we discussed in \Cref{sec:primgroups}.
Let \(\CA = \CA(G)\) be the corresponding reflection arrangement and \(n = \dim(V)\).
According to \Cref{tab:primgroups}, we have \(\dim(H^{3k}(M(\CA))^G) = 1\) for \(k = 0, 1\).
Bases for these degrees are given by \(\{1\}\) and \(\{\epsilon_G\cdot h\}\), respectively, where \(h\) corresponds to some hyperplane \(H\in \CA\).

Assume in the following that \(G\) has top degree invariants, so \(G\notin\{W(Q), W(R)\}\).
To complete the bases for \(H^*(M(\CA))^G\), we need to find a non-zero element in degree \(3(n - 1)\) and a non-zero element in degree \(3n\) by \Cref{tab:primgroups} again.
The contribution in degree \(3(n - 1)\) comes from a parabolic subgroup \(P\leq G\) of rank \(n - 1\) with top degree invariants.
Combining the results from \cite{douglaspfeifferroehrle:invariants} and this article with the lists of parabolic subgroups in \cite[\S 7.2]{BST23}, we can identify these groups.

\Cref{tab:basisprim} lists the groups \(G\) together with the parabolic subgroups \(P\) and the hyperplanes that give a basis of \(H^{3(n - 1)}(M(\CA))^G\) and \(H^{3n}(M(\CA))^G\).
The data in the table are to be interpreted as follows.
The hyperplanes are given by their linear forms, that is, if \(f\) is a polynomial in the table, then \(\ker(f)\) is the corresponding hyperplane.
If \(f_1,\dots,f_n\) are the polynomials listed for the group \(G\), then these give generators \(h_1,\dots, h_n\in H^3(M(\CA))\) corresponding to the hyperplanes \(\ker(f_1),\dots, \ker(f_n)\).
Bases of \(H^{3(n - 1)}(M(\CA))^G\) and \(H^{3n}(M(\CA))^G\) are then given by \(\{\epsilon_G\cdot h_1\cdots h_{n - 1}\}\) and \(\{\epsilon_G \cdot h_1\cdots h_n\}\), respectively.

The results in \Cref{tab:basisprim} were computed using \textsf{OSCAR} \cite{Dec+25,Osc25}.
For the groups \(W(S_2)\) and \(W(T)\), the reflections fixing the hyperplanes given in the table generate the corresponding groups.
For the other groups \(G\), this is not the case and there is no basis of \(H^{3n}(M(\CA))^G\) that is related to generators of \(G\), similar to \Cref{rem:nogens}.

\begin{table}
  \caption{Hyperplanes giving a basis of \(H^{3n}(M(\CA))^G\)}
  \label{tab:basisprim}
  \renewcommand{\arraystretch}{1.2}
  \begin{tabular}{llll}
    \hline
    \(G\) & \(P\) & \(n\) & Linear forms of hyperplanes\\
    \hline
    \(W(S_1)\) & \(\mathsf C_2\times \mathsf C_2\times \mathsf C_2\) & 4 &  \(x_1 - x_4\), \(x_2 + x_3\), \(x_2 - x_3\), \(x_1 + x_3\)\\
    \hline
    \(W(S_2)\) & \(G(2, 1, 3)\) & 4 & \(x_1\), \(x_2 - x_3\), \(x_2 + x_3\), \(x_1 - \mathbf j x_2 - \mathbf i x_3 - \mathbf k x_4\)\\
    \hline
    \(W(S_3)\) & \(G_3(\mathsf D_2, \mathsf C_2)\) & 4 &\(x_2 -\mathbf i x_3\), \(x_2 - \mathbf k x_3\), \(x_1 + \mathbf k x_2 + \mathbf j x_3 + \mathbf i x_4\), \(x_1 + x_3\)\\
    \hline
    \(W(T)\) & \(H_3\) & 4 & \(2x_1 -(\psi \mathbf i + \phi \mathbf j - \mathbf k)x_2 -(\phi \mathbf i - \mathbf j - \psi \mathbf k) x_3 + (\mathbf i + \psi \mathbf j + \phi \mathbf k)x_4\),\\
    & & & \(2x_1 - (\mathbf i + \psi\mathbf j - \phi\mathbf k)x_2 + (\psi \mathbf i + \phi\mathbf j + \mathbf k)x_3 - (-\phi \mathbf i + \mathbf j - \psi\mathbf k) x_4\), \\
    & & & \(2x_1 - (-\phi \mathbf i + \mathbf j - \psi\mathbf k) x_2 + (\mathbf i + \psi \mathbf j - \phi \mathbf k) x_3 - (\psi \mathbf i + \phi \mathbf j + \mathbf k)x_4\), \\
    & & & \(-\phi x_1 + \psi x_2 + x_3\) \\
    \hline
    \(W(U)\) & \(W(S_1)\) & 5 & \(x_1 + \frac{1}{2}(1 + \mathbf i + \mathbf j + \mathbf k) x_2 + \frac{1}{2}(1 + \mathbf i + \mathbf j + \mathbf k)x_3 + x_4\), \\
    & & & \(x_1 - \frac{1}{2}(1 + \mathbf i + \mathbf j + \mathbf k) x_2 - \frac{1}{2}(1 + \mathbf i + \mathbf j + \mathbf k)x_3 + x_4\),\\
    & & & \(x_5\), \\
    & & & \(x_1 + x_3 + \frac{1}{2}(1 + \mathbf i + \mathbf j + \mathbf k)x_4 -\frac{1}{2}(1 + \mathbf i + \mathbf j + \mathbf k)x_5\), \\
    & & & \(x_1 + x_2 + \frac{1}{2}(1 - \mathbf i - \mathbf j - \mathbf k)x_3 - \frac{1}{2}(1 - \mathbf i - \mathbf j - \mathbf k)x_5\) \\
    \hline
  \end{tabular}
  \vskip .5ex
  {\raggedright Abbreviations: \(\phi = \frac{1 + \sqrt 5}{2}\), \(\psi = \frac{1 - \sqrt{5}}{2}\)\par}
\end{table}

\end{document}